\providecommand{\U}[1]{\protect\rule{.1in}{.1in}}
\newtheorem{theorem}{Theorem}
\newtheorem{lemma}[theorem]{Lemma}
\newtheorem{proposition}[theorem]{Proposition}
\newtheorem{remark}[theorem]{Remark}
\numberwithin{equation}{section}
\begin{document}

\title[KP-I Lump Solution]{Nondegeneracy, Morse index and orbital stability of the KP-I lump solution }

\author[Y.Liu]{Yong Liu}

\address{\noindent School of Mathematics and Physics, North China Electric Power University, Beijing, China}
\email{liuyong@ncepu.edu.cn}

\author[J. Wei]{Juncheng Wei}
\address{\noindent
Department of Mathematics,
University of British Columbia, Vancouver, B.C., Canada, V6T 1Z2}
\email{jcwei@math.ubc.ca}

\begin{abstract}
Using B\"{a}cklund
transformation the spectral property of the KP-I lump solution is completely analyzed. It is proved that the lump solution is nondegenerate and has Morse
index one. As a consequence  we show that it is orbitally stable.

\end{abstract}

\maketitle

\section{Introduction and statement of the main results}

The KP equation, introduced by Kadomtsev and Petviashvili \cite{KP}, is a
classical nonlinear dispersive equation appearing in many physical contexts,
including the motion of shallow water waves. It has the form
\begin{equation}
\partial_{x}\left(  \partial_{t}u+\partial_{x}^{3}u+3\partial_{x}\left(
u^{2}\right)  \right)  -\sigma\partial_{y}^{2}u=0, \label{KPI}%
\end{equation}
where $\sigma$ is a parameter, usually chosen to be $\pm1.$ In this paper, we
will consider the case of $\sigma=1,$ which is called KP-I equation. It is
worth pointing out that the KP-II equation, corresponding to $\sigma=-1$, has
different dispersion relation and its property is quite different from that of KP-I.

If the function $u$ is $y$ independent, then equation $\left(  \ref{KPI}%
\right)  $ will be reduced to the KdV equation, a well known integrable system which has
been extensively studied for about half a century. Dryuma \cite{Dry} in 1974
found a Lax pair for the KP equation. The inverse scattering transform for
KP-I equation then has been carried out in \cite{Fokas, Mana1,Zhou}. (See
\cite{Ablowitz1, Ablowitz2} and the references therein for more
discussions on the results concerning this topic obtained before 1991.)

The soliton solutions of the KP equation can be obtained through various
different methods, both for the KP-I and KP-II equations. An important feature
of the KP-I equation is that it admits a family of lump solutions which are
also of travelling wave type. They were found in \cite{Manakov, Sa}.
Explicitly, the KP-I equation has a family of lump solutions of the form%
\begin{equation}
u_{c}=4\frac{-\left(  x-ct\right)  ^{2}+cy^{2}+\frac{3}{c}}{\left(  \left(
x-ct\right)  ^{2}+cy^{2}+\frac{3}{c}\right)  ^{2}}.
\end{equation}
It particular, for $c=1,$ it can be written as $Q\left(  x-t,y\right)  ,$
where
\begin{equation}
Q\left(  x,y\right)  =4\frac{y^{2}-x^{2}+3}{\left(  x^{2}+y^{2}+3\right)
^{2}}.
\end{equation}
Note that $Q$ is non-radial and decays like $O\left(  \frac{1}{x^{2}+y^{2}}\right)  $ at
infinity. Because of this slow decaying property, the inverse scattering
transform of KP-I turns out to be quite delicate. Indeed, it is shown in
\cite{VA} that a winding number can be associated to the lump solutions. We
also mention that the interaction of multi-lump solutions has been
investigated in \cite{G,Lu}.

The KP-I equation and its lump solutions actually appear in many other
physical models. For instance, formal asymptotic analysis shows that in the
context of the motion in a Bose condensate, the transonic limit of certain
traveling wave solutions to the GP equation is related to the lump solutions
(\cite{Roberts,Roberts3}). This is the so called Roberts's Program
(\cite{Roberts2,Roberts3}). A rigorous verification is given recently in
\cite{BGS1} and \cite{CM}. The KP-I equation has other travelling wave
solutions with energy higher than the lump $Q$. Indeed, using binary Darboux
transformation and other methods, families of rational type solutions have
been studied in \cite{V2,AV,G,Pelin1,Pelin2,P1} (for instance, see P. 139 of
\cite{V2}). Explicitly, one of the families of these solutions has the form of
$2\partial_{x}^{2}\ln F,$ with%
\[
F=\left\vert f^{3}-4f+g\right\vert ^{2}+27\left\vert f^{2}-\frac{8}{3}\sqrt
{3}-3\right\vert ^{2}+162\left\vert f\right\vert ^{2}+1593,
\]
where $f=x+yi+\gamma$ and $g=12yi+\delta$, with $\gamma,\delta$ being complex
parameters. In particular, if $\gamma=\delta=0,$ then we get a solution with
$F$ being even in both $x$ and $y$ variables.

Regarding to traveling wave type solutions, it is worth mentioning that the
generalized KP-I equations
\begin{equation}
\partial_{x}^{2}\left(  \partial_{x}^{2}u-u+u^{p}\right)  -\partial_{y}^{2}u=0
\label{KPgeneralized}%
\end{equation}
also have lump type solutions for suitable $p\in\left(  1,5\right)
$(\cite{Saut,Liu}). Note that for general $p\neq2,$ this is believed to be not
an integrable system. Hence no explicit formula is available. These lump type
solutions are obtained via variational methods (concentration compactness). The
stability(or instability) and other properties of these ground state solutions
have been studied in \cite{Saut,Saut2,Saut3,Klein,LiuYue,Sau0,Sau,Wang}. It is
known that for $p\in\left(  1,\frac{7}{3}\right)  $, they are orbitally
stable, while for $p\in\left(  \frac{7}{3},5\right)  $, they are unstable.
However, it is not known whether or not  $Q$ has this variational characterization. It
is conjectured that for $p=2,$ there should be a unique (up to translation)
ground state, which should be the lump solution $Q.$ We refer to the very
interesting paper of \cite{Klein} for a review and numerical study of the lumps.

\medskip

Our first result in this paper is

\begin{theorem}
\label{main} Suppose $\phi$ is a smooth solution to the equation
\begin{equation}
\partial_{x}^{2}\left(  \partial_{x}^{2}\phi-\phi+6Q\phi\right)  -\partial
_{y}^{2}\phi=0. \label{FI}%
\end{equation}
Assume that
\[
\phi\left(  x,y\right)  \rightarrow0,\text{ as }x^{2}+y^{2}\rightarrow
+\infty.
\]
Then $\phi=c_{1}\partial_{x}Q+c_{2}\partial_{y}Q,$ for some constants
$c_{1},c_{2}.$
\end{theorem}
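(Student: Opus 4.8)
The equation \eqref{FI} is the linearization of the KP-I traveling wave equation at the lump $Q$; here $6Q\phi$ comes from differentiating $3(u^2)$ and the factor $Q$ already carries the scaling $c=1$. The natural strategy is to exploit the complete integrability of KP-I and, in particular, the fact that $Q$ is produced from the vacuum by a Bäcklund (Darboux) transformation, so that the linearized operator at $Q$ should be conjugate — via an explicit first-order differential operator built from the Bäcklund data — to the linearized operator at the trivial solution $u\equiv 0$. Concretely, I would write $Q = 2\partial_x^2 \ln F$ with $F = x^2+y^2+3$ (a rational $\tau$-function), introduce the associated wave/eigenfunctions of the Lax pair at $Q$, and use them to construct an intertwining operator $\mathcal{L}$ such that $\mathcal{L}$ maps solutions $\phi$ of \eqref{FI} to solutions $\psi$ of the \emph{constant-coefficient} equation $\partial_x^2(\partial_x^2\psi - \psi) - \partial_y^2\psi = 0$. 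That constant-coefficient problem can be analyzed directly by Fourier transform: writing $\widehat\psi(\xi,\eta)$, the symbol is $\xi^2(\xi^2+1)+\eta^2 \ge 0$, which vanishes only at $\xi=\eta=0$, so the only tempered solutions decaying at infinity are (at most) polynomials of low degree, and the decay hypothesis forces $\psi$ to lie in a finite-dimensional space — in fact, after accounting for the structure of the transformation, $\psi$ must be trivial or a very restricted object.

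The core of the argument is then the reverse direction: showing that the kernel of the intertwining operator $\mathcal{L}$, intersected with the space of functions decaying at infinity and solving \eqref{FI}, is spanned exactly by $\partial_x Q$ and $\partial_y Q$. The two translation modes $\partial_x Q$ and $\partial_y Q$ are manifestly solutions of \eqref{FI} (differentiate the traveling-wave equation $\partial_x^2(\partial_x^2 Q - Q + 3Q^2) - \partial_y^2 Q = 0$ in $x$ and in $y$), and they decay like $O((x^2+y^2)^{-3/2})$, hence satisfy the hypothesis. So the content is the upper bound on the dimension of the solution space. I would obtain this by combining the conjugation with a careful Fredholm/ODE-type analysis: reduce \eqref{FI}, for each fixed ``frequency'' in a suitable sense, to a system whose solution space is finite-dimensional, track how the Bäcklund transformation acts on the relevant finite-dimensional pieces (it drops the dimension by exactly the number of bound states created, here accounting for the translation modes), and verify that no additional decaying solution survives. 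A useful sanity check along the way: the same scheme must reproduce that $\partial_x Q,\partial_y Q$ are the \emph{only} decaying kernel elements and, in the Morse-index computation elsewhere in the paper, that there is exactly one negative direction (coming from scaling), so the bookkeeping of dimensions through the Bäcklund step has to be exact, not just an inequality.

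The main obstacle I anticipate is making the intertwining rigorous at the level of function spaces rather than formal algebra. The Bäcklund transformation for KP-I involves the nonlocal operator $\partial_x^{-1}$ and eigenfunctions with prescribed growth/decay; establishing that $\mathcal{L}$ and its ``inverse'' are well defined on the relevant weighted spaces, that they genuinely preserve the decay-at-infinity condition in both directions, and that no solution of \eqref{FI} is lost or spuriously gained in passing to the constant-coefficient side, is where the analytic work concentrates. In particular, one must handle the anisotropic, only polynomially-decaying nature of $Q$ and of $\partial_x Q,\partial_y Q$: the correct functional framework is likely an anisotropic Sobolev space adapted to the KP scaling $x\sim \partial_x^{-1}\partial_y$, and proving the intertwining operator is bounded with bounded (generalized) inverse on such spaces — equivalently, that $0$ is not in the essential spectrum in a way that would allow extra kernel — is the crux. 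Once that is in place, the classification of decaying solutions of the constant-coefficient equation is elementary and the theorem follows.
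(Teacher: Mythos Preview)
Your high-level strategy --- conjugate the linearized operator at $Q$ to the linearized operator at the vacuum via B\"acklund transformation --- is exactly what the paper does. But two concrete features of the actual argument are missing from your plan, and at least one of your stated expectations is wrong in a way that matters.

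First, the lump is not obtained from the vacuum by a single B\"acklund step. The paper passes through an intermediate \emph{complex-valued} $\tau$-function $\tau_1 = x+iy+\sqrt{3}$: one transformation takes $\tau_0=1$ to $\tau_1$, a second takes $\tau_1$ to $\tau_2=x^2+y^2+3$. Both linearized transformations must be solved explicitly, and each one is a genuinely nontrivial inhomogeneous third-order ODE in $x$ whose fundamental solutions the paper writes down in closed form. Your proposal to build a single intertwining operator $\mathcal{L}$ glosses over this two-step structure.

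Second, and more importantly: the intertwining does \emph{not} preserve decay at infinity, and your plan to ``verify that it preserves the decay-at-infinity condition in both directions'' and then invoke Fourier analysis on the vacuum side cannot work as stated. What the paper actually does is pass from the decaying $\phi$ to $\eta_2=\tau_2\,\partial_x^{-2}\phi$, which only satisfies $|\eta_2|\le C(1+r)^{5/2}$, and then propagate this \emph{polynomial growth} bound (not decay) through both linearized B\"acklund steps. On the vacuum side one gets a function $\eta_0$ solving $\partial_x^4\eta_0-\partial_x^2\eta_0-\partial_y^2\eta_0=0$ with $|\eta_0|\le C(1+r)^{5/2}$; the growth bound kills the $\partial_x^4$ term and forces $\eta_0$ to be harmonic of degree $\le 2$, i.e.\ $\eta_0\in\mathrm{span}\{1,x,y,x^2-y^2,xy\}$. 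This is a Liouville argument, not a Fourier one. The final and most computational step --- absent from your sketch --- is to trace each of these five harmonic polynomials forward through the two B\"acklund steps \emph{explicitly}, and check that the degree-one and degree-two terms produce $\eta_2$'s violating the $(1+r)^{5/2}$ bound. Only the constant survives, and it corresponds exactly to $\mathrm{span}\{\partial_x\tau_2,\partial_y\tau_2,\tau_2\}$, hence to the translation modes. Your proposal correctly identifies that bookkeeping through the B\"acklund step must be exact, but the mechanism is explicit polynomial computation against a growth constraint, not a functional-analytic boundedness argument in weighted Sobolev spaces.
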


Theorem \ref{main} has long been conjectured to be true. See the remark after
Lemma 7 of \cite{Wang}, concerning the spectral property and its relation to stability.

Our second result concerns the Morse index of $Q.$ By definition, the Morse
index of $Q$ is the number of negative eigenvalues (counted with multiplicity)
of the operator
\[
\mathcal{L}\phi:=-\partial_{x}^{2}\phi+\phi-6Q\phi+\partial_{x}^{-2}%
\partial_{y}^{2}\phi.
\]

\begin{theorem}
\label{thm2}
The operator $\mathcal{L}$ has exactly one negative eigenvalue and hence has
Morse index one.
\end{theorem}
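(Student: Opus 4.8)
The plan is to prove separately that $\mathcal{L}$ has at least one and at most one negative eigenvalue; the first is an elementary computation, while the second is where the B\"{a}cklund-transformation machinery is needed, and is by far the harder half.

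For the lower bound I would simply test $\mathcal{L}$ against the lump itself. Rewriting the profile equation as $\partial_x^2\bigl(\partial_x^2Q-Q+3Q^2\bigr)-\partial_y^2Q=0$ and integrating it twice in $x$ gives $\partial_x^{-2}\partial_y^2Q=\partial_x^2Q-Q+3Q^2$; this is legitimate because $Q=4\,\partial_x^2\ln(x^2+y^2+3)$, so that $\partial_x^{-1}\partial_yQ=-16xy(x^2+y^2+3)^{-2}\in L^2(\mathbb{R}^2)$ and $Q$ lies in the form domain of $\mathcal{L}$. Consequently $\mathcal{L}Q=-3Q^2$, so
\[
\langle \mathcal{L}Q,Q\rangle=-3\int_{\mathbb{R}^2}Q^3,
\]
and an explicit computation in polar coordinates (the angular integrals of the odd powers of $\cos2\theta$ vanish, leaving a positive radial integral) shows $\int_{\mathbb{R}^2}Q^3>0$. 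Hence $\langle\mathcal{L}Q,Q\rangle<0$ and $\mathcal{L}$ has at least one negative eigenvalue. One may equivalently use the scaling direction $\partial_cu_c|_{c=1}$, for which $\mathcal{L}\bigl(\partial_cu_c|_{c=1}\bigr)=-Q$ while $\|u_c\|_{L^2}^2=c^{1/2}\|Q\|_{L^2}^2$ is increasing in $c$.

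For the upper bound, note first that $\mathcal{L}_0:=-\partial_x^2+1+\partial_x^{-2}\partial_y^2\ge 1$ on the natural energy space and $6Q$ is a relatively compact perturbation, so $\mathcal{L}$ has essential spectrum $[1,\infty)$ and only finitely many eigenvalues below $1$; it remains to bound the number of strictly negative ones by one. The natural bookkeeping uses parity: $Q$ is even in $x$ and even in $y$, so $\mathcal{L}$ commutes with $x\mapsto-x$ and with $y\mapsto-y$ and decomposes into four sectors. By Theorem \ref{main}, $\ker\mathcal{L}=\mathrm{span}\{\partial_xQ,\partial_yQ\}$, so the kernel meets only the two mixed-parity sectors (the one containing $\partial_xQ$ and the one containing $\partial_yQ$), one dimension in each, whereas the negative direction $Q$ lies in the (even, even) sector. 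I would then use the B\"{a}cklund transformation producing $Q$ from the zero solution to construct explicit operators (differential, together with $\partial_x^{-1}$) intertwining $\mathcal{L}$ with $\mathcal{L}_0$ within each sector. Transporting the spectral picture of the nonnegative operator $\mathcal{L}_0$ through these intertwiners, modulo their small and explicitly computable kernels, should yield: $\mathcal{L}\ge 0$ on the (odd, odd) sector; $\mathcal{L}\ge 0$ with one-dimensional kernel on each of the two mixed-parity sectors; and, in the (even, even) sector, precisely the single bound state found above, with no kernel there by Theorem \ref{main}. Adding the contributions of the four sectors gives Morse index one.

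The main obstacle is entirely in this last step: making the B\"{a}cklund intertwining rigorous on the weighted spaces adapted to $Q$ and to $0$, identifying the kernels and cokernels of the intertwining operators exactly, and in particular excluding an eigenvalue or threshold resonance near the bottom $0$ of the interval $(-\infty,1)$ that a crude count could miss. The slow $O((x^2+y^2)^{-1})$ decay of $Q$ together with the nonlocal term $\partial_x^{-2}\partial_y^2$ makes both the domain questions and this threshold analysis delicate, and it is Theorem \ref{main} that removes the ambiguity at $0$ in the (even, even) sector; the remaining work is to check that the integrable structure creates no additional negative modes in the other three.
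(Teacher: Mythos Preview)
Your lower-bound computation is fine: $\mathcal{L}Q=-3Q^{2}$ and the angular average in polar coordinates shows $\int_{\mathbb{R}^2}Q^{3}>0$, so $\mathcal{L}$ has at least one negative eigenvalue.

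The gap is in the upper bound. You propose to build intertwiners between $\mathcal{L}$ and the free operator $\mathcal{L}_0$ directly from the B\"acklund chain $\tau_0\to\tau_1\to\tau_2$ and then count bound states sector by sector. But the intermediate tau function $\tau_1=x+iy+\sqrt{3}$ is \emph{complex}: the associated ``solution'' $2\partial_x^2\ln\tau_1$ is not real-valued, and the linearized operator at that step is not self-adjoint on any real Hilbert space. The paper uses this complex chain only to pin down the \emph{kernel} of $\mathcal{L}$ (Theorem~\ref{main}), where one can argue pointwise with ODE estimates and never needs a quadratic form. Transporting a count of \emph{negative} eigenvalues through a non-self-adjoint intermediate problem is a different matter, and you give no mechanism for it; your own last paragraph flags exactly this step as ``the main obstacle'' and leaves it unresolved. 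The parity decomposition is fine bookkeeping but does not touch this difficulty.

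The paper's proof takes a completely different route that sidesteps the issue. It embeds $Q$ in a one-parameter family $Q_k=2\partial_x^2\ln\iota_2$, $k\in(0,\tfrac12)$, of \emph{real} $y$-periodic travelling waves (period $t_k=2\pi/(k\sqrt{1-k^{2}})$) interpolating between the lump ($k\to0$) and the KdV soliton $\tfrac12\operatorname{sech}^{2}(x/2)$ ($k\to\tfrac12$). B\"acklund transformations are used again, but only to prove that each $Q_k$ is nondegenerate in the $t_k$-periodic class (Theorem~\ref{thm4}); here the intermediate tau function $\iota_1$ is complex in $y$ but the argument is still purely at the level of the kernel. Since no zero eigenvalue can appear or disappear along the path, the Morse index of $\mathcal{L}_k$ is constant in $k$. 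At the KdV endpoint it equals one by a Fourier decomposition in $y$ together with the Alexander--Pego--Sachs/Rousset--Tzvetkov analysis (Lemma~\ref{one-dimensional}). Hence the lump has Morse index one. The homotopy stays entirely within real self-adjoint problems, which is exactly what your direct-intertwining scheme is missing.
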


With the spectral information of the lump at hand, it is natural to
investigate the nonlinear stability of the lump solution. By a result of
Ionescu-Kenig-Tataru \cite{Kenig2}, the KP-I equation is well-posed in the
natural energy space. Note that while the initial value problem of KP-II is
relatively easy to handle (see \cite{Bourgain}), the well-posedness (or
ill-posedness) of the KP-I equation is much more delicate. We also refer to
\cite{Kenig1,S1,M1,M2} and the references therein for more detailed discussion
on this topic. We prove in this paper:

\begin{theorem}
\label{thm3}
The lump solution  $u_{c}$ is orbitally stable.
\end{theorem}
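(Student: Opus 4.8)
The plan is to run the classical Grillakis--Shatah--Strauss (GSS) variational scheme; the point is that the two spectral inputs it requires are precisely Theorems \ref{main} and \ref{thm2}. Recall that in the energy space KP-I possesses the conserved Hamiltonian
\[
E(u)=\int\Big(\tfrac12u_{x}^{2}+\tfrac12\big(\partial_{x}^{-1}u_{y}\big)^{2}-u^{3}\Big)\,dx\,dy
\]
together with the conserved momentum $F(u)=\tfrac12\int u^{2}\,dx\,dy$, and that the travelling lump $u(x,y,t)=u_{c}(x-ct,y)$ is a critical point of the action functional $E+cF$. Its second variation at $u_{c}$ is $\mathcal{L}_{c}=E''(u_{c})+cF''(u_{c})$; for $c=1$ this is exactly the operator $\mathcal{L}$ of Theorem \ref{thm2}, and the general case reduces to $c=1$ by the scaling $u_{c}(x,y)=cQ(\sqrt{c}\,x,cy)$. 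Theorem \ref{main} says that on the energy space $\ker\mathcal{L}=\mathrm{span}\{\partial_{x}Q,\partial_{y}Q\}$, i.e.\ the kernel consists exactly of the two translational modes; Theorem \ref{thm2} says $\mathcal{L}$ has exactly one negative eigenvalue; and since multiplication by $Q$ is relatively compact, Weyl's theorem gives $\sigma_{\mathrm{ess}}(\mathcal{L})=[1,\infty)$, so $0$ is an isolated point of the spectrum. These are the structural hypotheses of GSS.

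The scalar function governing stability is $d(c):=E(u_{c})+cF(u_{c})$. Because $u_{c}$ is a critical point of $E+cF$, one has $d'(c)=F(u_{c})$, and the scaling identity gives $F(u_{c})=\tfrac12\|u_{c}\|_{L^{2}}^{2}=\tfrac12c^{1/2}\|Q\|_{L^{2}}^{2}$, strictly increasing in $c$. Hence
\[
d''(c)=\tfrac14c^{-1/2}\|Q\|_{L^{2}}^{2}>0 .
\]
Equivalently, differentiating the profile equation in $c$ yields $\mathcal{L}_{c}\,\partial_{c}u_{c}=-F'(u_{c})=-u_{c}$, so $\langle\mathcal{L}_{c}^{-1}u_{c},u_{c}\rangle=-d''(c)<0$; this is exactly the sign condition which, combined with the fact that $\mathcal{L}_{c}$ has only one negative direction, forces $\mathcal{L}_{c}$ to be nonnegative on the tangent space $\{\langle\phi,u_{c}\rangle=0\}$ of the constraint manifold $\{F=F(u_{c})\}$ modulo its kernel.

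The rest is the standard argument. From $n(\mathcal{L}_{c})=1$, $\ker\mathcal{L}_{c}=\mathrm{span}\{\partial_{x}Q,\partial_{y}Q\}$, the spectral gap at $0$, and $d''(c)>0$ one derives a coercivity estimate
\[
\langle\mathcal{L}_{c}\phi,\phi\rangle\ \geq\ \delta\,\|\phi\|_{X}^{2}
\]
for every $\phi$ in the codimension-three subspace $\{\langle\phi,u_{c}\rangle=\langle\phi,\partial_{x}Q\rangle=\langle\phi,\partial_{y}Q\rangle=0\}$, where $X$ is the KP-I energy space with norm $\|\phi\|_{X}^{2}=\|\phi_{x}\|_{L^{2}}^{2}+\|\partial_{x}^{-1}\phi_{y}\|_{L^{2}}^{2}+\|\phi\|_{L^{2}}^{2}$. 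Since KP-I is globally well posed in the energy space by Ionescu--Kenig--Tataru \cite{Kenig2}, the quantities $E$ and $F$ are conserved along the flow; feeding this into the coercivity estimate together with a modulation argument in the $x$- and $y$-translation parameters (here $Q$ is even, so the $y$-momentum vanishes and $d$ remains a function of the single variable $c$) shows that the Lyapunov functional $E+cF$ controls the distance of a solution to the two-parameter orbit $\{u_{c}(\cdot-a,\cdot-b)\}$, which is orbital stability for $c=1$; the general case $c>0$ follows by scaling. This is the scheme developed in \cite{Saut} for generalized KP solitary waves, which could not be carried out for the lump itself until Theorems \ref{main} and \ref{thm2} became available.

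The main obstacle is the coercivity step carried out in the correct functional setting. The energy space of KP-I is not a classical Sobolev space -- it is built from the anti-derivative $\partial_{x}^{-1}\partial_{y}$ -- and $Q$ lies in $L^{2}$ only in a borderline sense, so turning the purely spectral statements of Theorems \ref{main} and \ref{thm2} into the quantitative lower bound $\langle\mathcal{L}_{c}\phi,\phi\rangle\gtrsim\|\phi\|_{X}^{2}$ on the constrained subspace requires some care: one must verify that $\phi\mapsto\int Q\phi^{2}$ is compact relative to $\|\cdot\|_{X}$ (using the decay of $Q$), that the spectral decomposition of $\mathcal{L}_{c}$ persists in $X$, and that the variational energy space is compatible with the resolution space in which \cite{Kenig2} provides continuity of the flow. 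These points are technical but by now routine once the spectral picture is known; the genuinely hard inputs are Theorems \ref{main} and \ref{thm2} themselves.
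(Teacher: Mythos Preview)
Your proposal is correct and follows essentially the same route as the paper: compute $d''(c)>0$ from the scaling $u_{c}(x,y)=cQ(\sqrt{c}\,x,cy)$, combine this with the spectral input (Morse index one and kernel spanned by translations) to obtain coercivity of $\mathcal{L}$ on the codimension-three subspace orthogonal to $Q,\partial_{x}Q,\partial_{y}Q$, and then run the Grillakis--Shatah--Strauss/Bona--Souganidis--Strauss Lyapunov argument with the global well-posedness of \cite{Kenig2}. One small inaccuracy: $Q$ decays like $r^{-2}$ and hence lies genuinely in $L^{2}(\mathbb{R}^{2})$, not merely in a borderline sense; this in fact makes the compactness and coercivity steps you flag as delicate entirely routine.
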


We refer to Theorem \ref{stability} in Section \ref{Morse} for a more
precisely statement of this result. We note that
the Morse index of the lump solution is numerically shown to be one (\cite{CS}). Here we give a rigorous proof. It is also worth mentioning that the stability
or instability of the line solitons of the KP-I or KP-II equation has already
been studied. The instability of the line soliton of the KP-I equation has
been proved in \cite{Zah}. The spectral instability of the line solitons for
the generalized KP-I equation is proved in \cite{A}. In \cite{F0,F}, the
orbital stability or instability of line solitons of KP-I equation in
$y$-periodic space is studied. It is shown there that the stability is
determined by the travelling speed of the line solitons. The case of critical
speed is then investigated in a recent paper\cite{Yamakazi}. The orbital and
asymptotic stability of the line solitons for the KP-II equation is studied by
Mizumachi and Tzvetkov in \cite{Mizu3,Mizu5}. We also refer to \cite{Martel}
for a review on stability of solitons for the gKdV equations.

As we mentioned before, the KP-I equation has close relation with travelling
waves of the GP equation. For solutions of the form $\Phi\left(
z-ct,x\right)  ,$ the GP equation is
\begin{equation}
ic\partial_{z}\Phi=\Delta_{\left(  z,x\right)  }\Phi-\Phi\left(  \left\vert
\Phi\right\vert ^{2}-1\right)  ,\text{ }\left(  z,x\right)  \in\mathbb{R}^{2}.
\label{GP!}%
\end{equation}
Formal computation performed in the appendix of \cite{Roberts} suggests that
this equation has a family of solutions with $c$ close to $\sqrt{2}$, and
their asymptotic profiles are determined by the lump solution. Our
nondegeneracy results in this paper will have potential applications in the
rigorous construction of these solutions. For more discussions on the
traveling waves of Gross-Pitaevskii equation, we refer to
\cite{BGS1,BGS2,CS,Maris} and the references therein.

Now let us briefly describe the main ideas of the proof of our main results.
One of our tools will be the B\"{a}cklund transformation. We first show that
the lump solution can be obtained from the trivial solution by performing
B\"{a}cklund transformation twice. Note that while this fact should be known
to experts in the field, we have not been able to find a reference for this
result. Next we consider these transformations in the linearized level. We
show that a kernel of the linearized operator around $Q$ can be transformed to
a kernel of the operator $\partial_{x}^{2}+\partial_{y}^{2}-\partial_{x}^{4},$
which is the linearized operator around the trivial solution. With some
information on the growth rate of the kernel function, we are able to conclude
that the only decaying solutions to $\left(  \ref{FI}\right)  $ are
corresponding to translations in the $x$ and $y$ axes. This proves Theorem \ref{main}.  Note that the general
idea of using linearized B\"{a}cklund transformation to investigate the
spectral property has already been used in \cite{Mizu} in the case of Toda
lattice. We also refer to \cite{Mizu1,Mizu4} for related discussion about
$n$-soliton solutions for the KdV equation and the NLS equation.

To prove Theorem 2, we consider
the nondegeneracy of a family of $y$-periodic traveling wave solutions to KP-I connecting the one dimensional soliton and the two-dimensional lump solution.  They are given as follows: let $ k, b \geq 0$ with $ k^2+ b^2=1$. Define
$$ \Gamma_k= \mbox{cosh} ( k x)+\sqrt{\frac{1-4k^2}{1-k^2}} \mbox{cosh} ( k b i y), $$
\begin{equation}
 Q_k (x, y)= 2 \partial^2_x \mbox{ln} \Gamma_k.
 \end{equation}

Then $Q_k (x-t, y)$ are traveling wave solutions to KP-I (with speed $c=1$). They are periodic in $y$ with period $ t_k=\frac{2\pi}{k \sqrt{1-k^2}}$. As $ k\to 0$, $Q_k$ converges to the lump solution. As $ k \to \frac{1}{2}$, $ Q_k$ converges to the one-dimensional soliton solution $ \frac{1}{2} \mbox{sech}^2(\frac{x}{2})$.

\begin{theorem}
\label{thm4} The solution $Q_k$  is
nondegenerate in the following sense: Suppose $\varphi$ is a function
decaying in the $x$ variable and $t_k$-periodic in the $y$
variable, satisfying
\[
\partial_{x}^{2}\left(  \partial_{x}^{2}\varphi-\varphi+6 Q_k \varphi\right)
-\partial_{y}^{2}\varphi=0.
\]
Then $\varphi=c_{1}\partial_{x} Q_k +c_{2}\partial_{y} Q_k.$
\end{theorem}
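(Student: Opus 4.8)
The plan is to follow the B\"{a}cklund--transformation scheme sketched in the introduction for Theorem \ref{main}, now carried out in the $y$-periodic category. \emph{First, realize $Q_k$ as a double B\"{a}cklund transform of the zero solution.} Writing $\alpha=\sqrt{(1-4k^{2})/(1-k^{2})}\in(0,1)$ for $k\in(0,\tfrac12)$, one has $\Gamma_k=\cosh(kx)+\alpha\cos(kby)\geq 1-\alpha>0$, so $\Gamma_k$ never vanishes and $Q_k$ together with all of its derivatives is smooth and decays exponentially as $|x|\to\infty$, uniformly in $y$. I would check, by a finite computation with the Hirota bilinear form of KP-I, that $\Gamma_k$ is obtained from the trivial tau function $\tau_{0}\equiv1$ by applying the B\"{a}cklund transformation twice, through an intermediate $y$-independent soliton solution (whose tau function is, up to gauge, $\cosh(kx)$), the second step using a \emph{purely imaginary} B\"{a}cklund parameter. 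It is precisely this imaginary parameter that turns the transverse dependence into a periodic one and that forces the relations $k^{2}+b^{2}=1$ and $\alpha^{2}=(1-4k^{2})/(1-k^{2})$.

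\emph{Second, linearize the two B\"{a}cklund transformations.} Differentiating the B\"{a}cklund relations, exactly as in the Toda--lattice argument of \cite{Mizu} that is used for the lump in this paper, produces an explicit linear map $\Lambda$. The goal is to show that $\Lambda$ sends any $\varphi$ which decays in $x$, is $t_k$-periodic in $y$, and solves
\[
\partial_{x}^{2}\bigl(\partial_{x}^{2}\varphi-\varphi+6Q_k\varphi\bigr)-\partial_{y}^{2}\varphi=0
\]
to a function $\psi:=\Lambda\varphi$ that is still $t_k$-periodic in $y$ and solves the constant--coefficient equation
\[
\partial_{x}^{2}\psi+\partial_{y}^{2}\psi-\partial_{x}^{4}\psi=0 .
\]
Three things need verification here: that $\Lambda$ is well defined (using that $\cosh(kx)$ and $\Gamma_k$ have no zeros); that it preserves $t_k$-periodicity in $y$; and, crucially, that it worsens the $x$-growth by at most a polynomial factor, so that the decay of $\varphi$ in $x$ forces $\psi$ to grow at most polynomially in $x$, uniformly in $y$. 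This last estimate --- the analogue of the control of the ``growth rate of the kernel function'' invoked for Theorem \ref{main} --- is the main analytic obstacle, because the B\"{a}cklund kernels behave like $e^{\pm kx}$ and the intermediate step introduces $\operatorname{sech}^{2}(kx)$-type coefficients, so the cancellations in the divisions must be tracked with care.

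\emph{Third, classify $\psi$.} Expanding $\psi(x,y)=\sum_{n\in\mathbb{Z}}a_{n}(x)e^{in\omega y}$ with $\omega=2\pi/t_k=k\sqrt{1-k^{2}}$, each coefficient solves $a_{n}''''-a_{n}''+n^{2}\omega^{2}a_{n}=0$, whose characteristic roots $\pm\mu_{n}^{\pm}$ satisfy $(\mu_{n}^{\pm})^{2}=\tfrac12\bigl(1\pm\sqrt{1-4n^{2}\omega^{2}}\bigr)$. For every $n\neq0$ these roots are nonzero and have nonzero real part --- whatever the sign of $1-4n^{2}\omega^{2}$ --- so every nonzero solution of the ODE grows exponentially as $x\to+\infty$ or as $x\to-\infty$, and polynomial boundedness forces $a_{n}\equiv0$; for $n=0$ the roots are $0,0,\pm1$, so the polynomially bounded solutions are exactly $a_{0}(x)=c_{1}+c_{2}x$. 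Hence the space of $t_k$-periodic, polynomially bounded solutions of $\partial_{x}^{2}\psi+\partial_{y}^{2}\psi-\partial_{x}^{4}\psi=0$ is the two--dimensional space $\operatorname{span}\{1,x\}$, and in particular $\dim(\operatorname{Im}\Lambda)\leq2$.

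\emph{Fourth, pull back and count dimensions.} The functions $\partial_{x}Q_k$ and $\partial_{y}Q_k$ solve the linearized equation at $Q_k$ (translation invariance in $x$ and in $y$), are $t_k$-periodic in $y$, decay exponentially in $x$, and are linearly independent (since $\Gamma_k$ is even in $x$ and in $y$, $\partial_{x}Q_k$ is odd in $x$ while $\partial_{y}Q_k$ is odd in $y$); so both lie in the domain of $\Lambda$. It then remains to prove that $\Lambda$ is injective on that domain: if $\Lambda\varphi=0$, unwinding the linearized B\"{a}cklund relations should force $\varphi$ to equal a fixed, nowhere--vanishing factor that does not decay in $x$ (built out of $e^{\pm kx}$ and $\operatorname{sech}$-type functions) times a function of $(y,t)$ alone, whence the $x$-decay of $\varphi$ gives $\varphi\equiv0$. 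Granting this, the space of $x$-decaying, $t_k$-periodic solutions of the linearized equation at $Q_k$ has dimension at most $\dim(\operatorname{Im}\Lambda)\leq2$; since it contains the two independent functions $\partial_{x}Q_k$ and $\partial_{y}Q_k$, it equals $\operatorname{span}\{\partial_{x}Q_k,\partial_{y}Q_k\}$, which is the assertion. The two steps I expect to be genuinely hard are the polynomial growth bound of the second step and the injectivity of the linearized double B\"{a}cklund transformation of the fourth step; the realization as a double transform and the ODE classification are essentially computations.
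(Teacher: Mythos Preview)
Your overall plan matches the paper's: realize $\Gamma_k$ as a double B\"acklund transform of $1$, linearize, push a kernel $\varphi$ down to a kernel of the constant-coefficient operator, and classify by Fourier series in $y$. But two of your expectations do not survive contact with the actual transformation, and the second one is a real gap.

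First, the intermediate tau function in the paper is not $y$-independent. The paper passes from $\iota_0=1$ to $\iota_1=r\,e^{\frac{k}{2}(x-py)}+e^{-\frac{k}{2}(x-py)}$ with $p=\sqrt{1-k^{2}}\,i$ (a complex-valued, $y$-dependent object of period $2t_k$), and from $\iota_1$ to $\iota_2=\Gamma_k$. As a consequence the image $\xi=\Lambda\varphi$ is only $\tfrac{4\pi}{kb}$-periodic, not $t_k$-periodic; this is harmless, but your ``purely imaginary parameter on top of $\cosh(kx)$'' step would have to be checked from scratch.

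Second, and this is the genuine gap: the polynomial growth bound you hope for is not what the linearized B\"acklund map produces, and it cannot be, because the characteristic equation at the bottom has a root exactly at the growth rate the map gives. Concretely, with $\omega=kb$ one has $1-4\omega^{2}=(1-2k^{2})^{2}\geq0$, so for the first nontrivial Fourier mode the roots of $\gamma^{4}-\gamma^{2}+\omega^{2}=0$ are $\gamma=\pm k,\ \pm\sqrt{1-k^{2}}$; in particular $e^{kx}\cos(kby)$ and $e^{kx}\sin(kby)$ solve $\partial_x^{4}\psi-\partial_x^{2}\psi-\partial_y^{2}\psi=0$. The paper's estimates on $\xi$ are precisely of this borderline order: decay like $e^{-\frac{k}{2}|x|}$ for $x\le0$, and for $x\to+\infty$ a bound of the form $|\xi-C_1(y)e^{\frac{3}{2}kx}-C_2e^{\frac{k-3\mu}{2}x+\cdots}|\le C(1+x^{2})e^{k|x|}$. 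The $e^{\frac{3}{2}kx}$ and $e^{\frac{k-3\mu}{2}x}$ tails are killed because $\gamma=\tfrac{3}{2}k$ and $\gamma=\tfrac{k-3\mu}{2}$ are \emph{not} roots of the characteristic polynomial for any admissible mode, but the residual $e^{kx}$-envelope is exactly compatible with the solutions $e^{kx}\cos(kby),\ e^{kx}\sin(kby)$. Your Fourier step therefore does not exclude this two-dimensional family, and the dimension count collapses.

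The paper closes this hole by an extra computation (Lemma~\ref{NG}): it solves the \emph{reverse} linearized B\"acklund system with source $\xi=e^{k(x\pm py)}$, obtains that the corresponding $\eta$ at the $\iota_1$ level has a nonvanishing $e^{\frac{3}{2}kx}$ leading term, and then runs the reverse step $\iota_1\to\iota_2$ to see that this contradicts the known bound $|K|\le C(1+|x|)e^{k|x|}$ on the kernel at the $\iota_2$ level. In short, the injectivity/classification is not purely a growth-rate argument; one needs the explicit asymptotics of the reverse B\"acklund map to rule out the borderline mode. Your Step~3 should be replaced by this finer analysis, and Step~2 should aim for $e^{k|x|}$-type control (with identified higher-order exponentials), not polynomial control.
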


With the help of Theorem \ref{thm4} we prove Theorem \ref{thm2} by a continuation argument.  By nondegeneracy along the path $ k \in [0, \frac{1}{2}]$,  the Morse index is invariant along the periodic solution. Since the Morse index of the one dimensional soliton is $1$, we conclude that the Morse index of the lump is also $1$.

Finally to prove Theorem \ref{thm3} we show the convexity of the energy
\[
d\left(  c\right)  :=\int_{\mathbb{R}^{2}}\left(  \frac{1}{2}\left(
\partial_{x}u_{c}\right)  ^{2}-u_{c}^{3}+\frac{1}{2}\left(  \partial_{y}\partial_{x}^{-1}u_{c}\right)  ^{2}+\frac{1}{2}cu_{c}^{2}\right)  .
\]
and  the classical result of Grillakis-Shatah-Strauss \cite{GSS} yields Theorem \ref{thm3}.

This paper is organized in the following way. In Section \ref{Sec2}, we recall
some basic facts about the bilinear derivatives. In Section \ref{lump}, we
prove the nondegenracy of lump. In Section \ref{period}, we prove the
nondegeneracy of a family of periodic solutions naturally associated to the
lump solution. In Section \ref{Morse}, we prove the orbital stability of lump.

\textit{Acknowledgement.} The research of J. Wei is partially supported by
NSERC of Canada. Part of this work is finished while the first author is
visiting the University of British Columbia in 2017. He thanks the institute
for the financial support. We thank Prof. J. C. Saut for pointing out to us
some key references.

\section{Preliminaries\label{Sec2}}

In this section we describe the bilinear form of KP-I and its associated B\"{a}cklund transform. 

We will use $D$ to denote the bilinear derivative. Explicitly,
\[
D_{x}^{m}D_{t}^{n}f\cdot g=\partial_{y}^{m}\partial_{s}^{n}f\left(
x+y,t+s\right)  g\left(  x-y,t-s\right)  |_{s=0,y=0.}%
\]
In particular,
\begin{align*}
D_{x}f\cdot g  &  =\partial_{x}fg-f\partial_{x}g,\\
D_{x}^{2}f\cdot g  &  =\partial_{x}^{2}fg-2\partial_{x}f\partial
_{x}g+f\partial_{x}^{2}g,\\
D_{x}D_{t}f\cdot g  &  =\partial_{x}\partial_{t}fg-\partial_{x}f\partial
_{t}g-\partial_{t}f\partial_{x}g+f\partial_{x}\partial_{t}g.
\end{align*}
The KP-I equation $\left(  \ref{KPI}\right)  $ can be written in the following
bilinear form \cite{H}:%
\begin{equation}
\left(  D_{x}D_{t}+D_{x}^{4}-D_{y}^{2}\right)  \tau\cdot\tau=0. \label{bi}%
\end{equation}
To explain this, we introduce the $\tau$-function:
\begin{equation}
u=2\partial_{x}^{2}\left(  \ln\tau\right)  . \label{t1}%
\end{equation}
Then equation $\left(  \ref{KPI}\right)  $ becomes
\[
\partial_{x}^{2}\left[  \left(  \partial_{x}\partial_{t}\ln\tau+\partial
_{x}^{4}\ln\tau+6\left(  \partial_{x}^{2}\ln\tau\right)  ^{2}\right)
-\partial_{y}^{2}\ln\tau\right]  =0.
\]
Using the identities (see Section 1.7.2 of \cite{H})
\[
2\partial_{x}^{2}\ln\tau=\frac{D_{x}^{2}\tau\cdot\tau}{\tau^{2}},\text{
}2\partial_{x}\partial_{t}\ln\tau=\frac{D_{x}D_{t}\tau\cdot\tau}{\tau^{2}},
\]
and
\[
2\partial_{x}^{4}\ln\tau=\frac{D_{x}^{4}\tau\cdot\tau}{\tau^{2}}-3\left(
\frac{D_{x}^{2}\tau\cdot\tau}{\tau^{2}}\right)  ^{2},
\]
we get
\[
\partial_{x}^{2}\left(  \frac{D_{x}D_{t}\tau\cdot\tau+D_{x}^{4}\tau\cdot
\tau-D_{y}^{2}\tau\cdot\tau}{\tau^{2}}\right)  =0.
\]
Therefore, if $\tau$ satisfies the bilinear equation $\left(  \ref{bi}\right)
,$ then $u$ satisfies the KP-I equation $\left(  \ref{KPI}\right)  .$

Let $i$ be the imaginary unit. We will investigate properties of the following
three functions:
\begin{align*}
\tau_{0}\left(  x,y\right)   &  =1,\\
\tau_{1}\left(  x,y\right)   &  =x+iy+\sqrt{3},\\
\tau_{2}\left(  x,y\right)   &  =x^{2}+y^{2}+3.
\end{align*}
Let
\begin{align*}
\tilde{\tau}_{0}\left(  x,y,t\right)   &  =\tau_{0}\left(  x-t,y\right)  ,\\
\tilde{\tau}_{1}\left(  x,y,t\right)   &  =\tau_{1}\left(  x-t,y\right)  ,\\
\tilde{\tau}_{2}\left(  x,y,t\right)   &  =\tau_{2}\left(  x-t,y\right)  .
\end{align*}
Then $\tilde{\tau}_{0},\tilde{\tau}_{1},\tilde{\tau}_{2}$ are solutions of
$\left(  \ref{bi}\right)  .$

\subsection{The B\"acklund transformation}

Under the transformation $\left(  \ref{t1}\right)  ,$ the function
$\tilde{\tau}_{2}$ is corresponding to the lump solution $Q\left(
x-t,y\right)  .$ Note that the solution corresponding to $\tilde{\tau}_{1}$ is
not real valued.

Now let $\mu$ be a constant. We first recall the following bilinear operator
identity (see \cite{Naka} or P. 90 in \cite{Rogers} for more details, we also
refer to \cite{Hu,Ma} and the references therein for the construction of more
general rational solutions, for KP and other related equations). Let $\mu
,\nu,\lambda$ be arbitrary parameters. Then we have%
\begin{align}
&  \frac{1}{2}\left[  \left(  D_{x}D_{t}+D_{x}^{4}-D_{y}^{2}\right)  f\cdot
f\right]  gg-\frac{1}{2}\left[  \left(  D_{x}D_{t}+D_{x}^{4}-D_{y}^{2}\right)
g\cdot g\right]  ff\nonumber\\
&  =D_{x}\left[  \left(  D_{t}+3\lambda D_{x}-\sqrt{3}i\mu D_{y}+D_{x}%
^{3}-\sqrt{3}iD_{x}D_{y}+\nu\right)  f\cdot g\right]  \cdot\left(  fg\right)
\nonumber\\
&  +3D_{x}\left[  \left(  D_{x}^{2}+\mu D_{x}+\frac{1}{\sqrt{3}}iD_{y}%
-\lambda\right)  f\cdot g\right]  \cdot\left(  D_{x}g\cdot f\right)
\nonumber\\
&  +\sqrt{3}iD_{y}\left[  \left(  D_{x}^{2}+\mu D_{x}+\frac{1}{\sqrt{3}}%
iD_{y}-\lambda\right)  f\cdot g\right]  \cdot\left(  fg\right)  . \label{back}%
\end{align}
By this identity, we can consider the B\"{a}cklund transformation from
$\tilde{\tau}_{0}$ to $\tilde{\tau}_{1}$($\mu=\frac{1}{\sqrt{3}}$):
\begin{equation}
\left\{
\begin{array}
[c]{l}%
\left(  D_{x}^{2}+\frac{1}{\sqrt{3}}D_{x}+\frac{1}{\sqrt{3}}iD_{y}\right)
\tilde{\tau}_{0}\cdot\tilde{\tau}_{1}=0,\\
\left(  D_{t}-iD_{y}+D_{x}^{3}-\sqrt{3}iD_{x}D_{y}\right)  \tilde{\tau}%
_{0}\cdot\tilde{\tau}_{1}=0.
\end{array}
\right.  \label{b1}%
\end{equation}
The B\"{a}cklund transformation from $\tilde{\tau}_{1}$ to $\tilde{\tau}_{2}$
is given by$\left(  \mu=-\frac{1}{\sqrt{3}}\right)  $
\begin{equation}
\left\{
\begin{array}
[c]{l}%
\left(  D_{x}^{2}-\frac{1}{\sqrt{3}}D_{x}+\frac{1}{\sqrt{3}}iD_{y}\right)
\tilde{\tau}_{1}\cdot\tilde{\tau}_{2}=0,\\
\left(  D_{t}+iD_{y}+D_{x}^{3}-\sqrt{3}iD_{x}D_{y}\right)  \tilde{\tau}%
_{1}\cdot\tilde{\tau}_{2}=0.
\end{array}
\right.  \label{b2}%
\end{equation}
Throughout the paper, we set $r=\sqrt{x^{2}+y^{2}}.$ We would like to relate
the kernel of the linearized KP-I equation to that of the linearized equation
in bilinear form.

\begin{lemma}
\label{core}Suppose $\phi$ is a smooth function satisfying the linearized KP-I
equation%
\[
\partial_{x}^{2}\left(  \partial_{x}^{2}\phi-\phi+6Q\phi\right)  -\partial
_{y}^{2}\phi=0.
\]
Assume
\[
\phi\left(  x,y\right)  \rightarrow0,\text{ as }r\rightarrow+\infty.
\]
Let
\[
\eta\left(  x,y\right)  =\tau_{2}\int_{0}^{x}\int_{-\infty}^{t}\phi\left(
s,y\right)  dsdt.
\]
Then $\eta$ satisfies
\[
\left(  -D_{x}^{2}+D_{x}^{4}-D_{y}^{2}\right)  \eta\cdot\tau_{2}=0.
\]
Moreover,
\begin{equation}
\left\vert \eta\right\vert +\left(  \left\vert \partial_{x}\eta\right\vert
+\left\vert \partial_{y}\eta\right\vert +\left\vert \partial_{x}^{2}%
\eta\right\vert +\left\vert \partial_{x}\partial_{y}\eta\right\vert
+\left\vert \partial_{x}^{3}\eta\right\vert \right)  \left(  1+r\right)  \leq
C\left(  1+r\right)  ^{\frac{5}{2}}. \label{es1}%
\end{equation}

\end{lemma}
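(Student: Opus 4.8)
The plan is to realise $\eta$ as a solution of the linearisation, at $\tau_2$, of the traveling‑wave bilinear equation, and then to read off \eqref{es1} from the decay of $\phi$. Recall from Section~\ref{Sec2} that $Q=2\partial_x^2\ln\tau_2$, that $\tau_2$ solves $(-D_x^2+D_x^4-D_y^2)\tau_2\cdot\tau_2=0$, and that the Hirota log‑identities give $\tfrac{1}{2}(-D_x^2+D_x^4-D_y^2)\tau\cdot\tau=\tau^2F[\ln\tau]$ with $F[v]:=-\partial_x^2v+\partial_x^4v+6(\partial_x^2v)^2-\partial_y^2v$. Taking the first variation at $\tau=\tau_2$ along the direction $\tau_2 w$ and using $F[\ln\tau_2]=0$ yields the exact identity, valid for every smooth $w=w(x,y)$,
\[
\left(-D_x^2+D_x^4-D_y^2\right)(\tau_2 w)\cdot\tau_2=\tau_2^{2}\left(\partial_x^4 w-\partial_x^2 w+6Q\,\partial_x^2 w-\partial_y^2 w\right).
\]
Thus it is enough to produce a $w$ with $\eta=\tau_2 w$ for which the right‑hand parenthesis vanishes; the candidate is the double $x$‑antiderivative $w$ of $\phi$ appearing in the statement, so that $\partial_x^2 w=\phi$.

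With this choice the parenthesis equals $H:=\partial_x^2\phi-\phi+6Q\phi-\partial_y^2 w$, and \eqref{FI} reads $\partial_x^2(\partial_x^2\phi-\phi+6Q\phi)=\partial_y^2\phi=\partial_x^2\partial_y^2 w$, so $\partial_x^2H=0$ and $H(x,y)=a(y)x+b(y)$. To force $H\equiv0$ I would first upgrade the bare decay $\phi\to0$ to a quantitative rate: viewing \eqref{FI} as $\partial_x^4\phi=\partial_x^2\phi-\partial_x^2(6Q\phi)+\partial_y^2\phi$ with $Q$ explicit and $O(r^{-2})$, elliptic (or line‑by‑line ODE) estimates give polynomial decay of $\phi$ together with its $x$‑ and $y$‑derivatives, in particular with $\int_{\mathbb{R}}\phi(s,y)\,ds$ convergent; integrating \eqref{FI} once in $x$ over $\mathbb{R}$ and using the decay of $\partial_x^3\phi,\partial_x\phi,\partial_x(Q\phi)$ forces $\partial_y^2\!\int_{\mathbb{R}}\phi(s,y)\,ds=0$, whence, by the decay in $|y|$, $\int_{\mathbb{R}}\phi(s,y)\,ds\equiv0$. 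Now let $x\to-\infty$: then $\phi,\partial_x^2\phi,Q\phi\to0$, and $\partial_y^2 w\to0$ by the choice of lower limits in $w$, so the affine function $H$ tends to $0$ at $-\infty$; hence $a\equiv0$ and $b\equiv0$, i.e. $(-D_x^2+D_x^4-D_y^2)\eta\cdot\tau_2=0$.

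For \eqref{es1}, the same decay, the vanishing $x$‑mean, and the equation just proved (which additionally forces $\partial_y^2 w\to0$ as $x\to+\infty$, hence $w(+\infty,\cdot)$ affine in $y$ and, being small, $\equiv0$) give decay, at suitable polynomial rates, of $w$ and of $\partial_x w,\partial_y w,\partial_x^2 w=\phi,\partial_x\partial_y w,\partial_x^3 w=\partial_x\phi$; Leibniz's rule applied to $\eta=\tau_2 w$, with $\tau_2=O(r^2)$, $\partial_x\tau_2,\partial_y\tau_2=O(r)$, $\partial_x^2\tau_2=2$, then yields \eqref{es1}, the exponent $\tfrac{5}{2}$ being deliberately more than the argument needs.

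The step I expect to be the main obstacle is this quantitative decay: extracting from the single hypothesis $\phi\to0$ enough decay of $\phi$ \emph{and its derivatives} to (i) make the double antiderivative legitimate, (ii) control the boundary behaviour at $x=\pm\infty$, and (iii) obtain the zero‑$x$‑mean identity. Granted that decay, the vanishing of the affine remainder $a(y)x+b(y)$ and the bound \eqref{es1} reduce to routine bookkeeping.
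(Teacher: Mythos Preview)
Your approach is the paper's: pass from $\phi$ to $\eta=\tau_2 w$ with $\partial_x^2 w=\phi$, identify the quotient $\tau_2^{-2}(-D_x^2+D_x^4-D_y^2)\eta\cdot\tau_2$ with $H:=\partial_x^2\phi-\phi+6Q\phi-\partial_y^2 w$, note $\partial_x^2H=0$, and kill the affine remainder by decay. The quantitative decay you flag as the main obstacle is exactly what the paper imports from de~Bouard--Saut (Lemma~3.6 of \cite{Saut2}), obtaining $|\phi|+(|\partial_x\phi|+|\partial_y\phi|)(1+r)\le C(1+r)^{-2}$ and the vanishing $x$-mean.

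One step is not quite right as written. With $G:=\partial_x^2\phi-\phi+6Q\phi$, the equation gives $\partial_y^2\phi=\partial_x^2G$; differentiating under the integrals then yields
\[
\partial_y^2 w(x,y)=\int_0^x\!\!\int_{-\infty}^t\partial_y^2\phi\,ds\,dt=\int_0^x\partial_xG(t,y)\,dt=G(x,y)-G(0,y),
\]
so $H(x,y)=G(0,y)$ identically. Your ``$\partial_y^2 w\to0$ as $x\to-\infty$'' is therefore false in general (the limit is $-G(0,y)$), and the argument only gives $a\equiv0$, not $b\equiv0$. The paper's proof is equally brief at this spot: it asserts the full quotient tends to $0$ from \eqref{es1}, but \eqref{es1} omits $\partial_y^2\eta$, which is exactly the term carrying $G(0,y)$.

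The clean repair is to adjust the outer antiderivative by a function of $y$: replace $w$ by $w+c(y)$ with $c''(y)=G(0,y)$. Since integrating $\partial_x^2G=\partial_y^2\phi$ in $y$ shows $\int_{\mathbb{R}}G(0,y)\,dy=0$, one can choose $c'(y)=\int_{-\infty}^{y}G(0,s)\,ds=O((1+|y|)^{-1})$ and $c(y)=O(\ln(1+|y|))$; the modified $\eta=\tau_2(w+c)$ then satisfies the bilinear equation exactly and still obeys \eqref{es1}.
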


\begin{proof}
We write the equation$\ $
\[
\partial_{x}^{2}\left(  \partial_{x}^{2}\phi-\phi+6Q\phi\right)  -\partial
_{y}^{2}\phi=0
\]
as
\[
\partial_{x}^{4}\phi-\partial_{x}^{2}\phi-\partial_{y}^{2}\phi=-6\partial
_{x}^{2}\left(  Q\phi\right)  .
\]
Since $\phi\rightarrow0$ as $r\rightarrow+\infty,$ it follows from the a
priori estimate of the operator $\partial_{x}^{2}-\partial_{x}^{4}%
+\partial_{y}^{2}$(see Lemma 3.6 of \cite{Saut2}) that
\[
\left\vert \phi\right\vert +\left(  \left\vert \partial_{x}\phi\right\vert
+\left\vert \partial_{y}\phi\right\vert \right)  \left(  1+r\right)  \leq
C\left(  1+r\right)  ^{-2}.
\]
Moreover, $\int_{-\infty}^{+\infty}\phi\left(  x,y\right)  dx=0,$ for all $y.$
Therefore,
\[
\left\vert \eta\right\vert +\left(  \left\vert \partial_{x}\eta\right\vert
+\left\vert \partial_{y}\eta\right\vert +\left\vert \partial_{x}^{2}%
\eta\right\vert +\left\vert \partial_{x}\partial_{y}\eta\right\vert
+\left\vert \partial_{x}^{3}\eta\right\vert \right)  \left(  1+r\right)  \leq
C\left(  1+r\right)  ^{\frac{5}{2}}.
\]
As a consequence,
\begin{equation}
\frac{-D_{x}^{2}\eta\cdot\tau_{2}+D_{x}^{4}\eta\cdot\tau_{2}-D_{y}^{2}%
\eta\cdot\tau_{2}}{\tau_{2}^{2}}\rightarrow0,\text{ as }r\rightarrow+\infty.
\label{g2}%
\end{equation}

On the other hand, since $\phi$ satisfies the linearized KP-I equation, $\eta$
satisfies
\[
\partial_{x}^{2}\left(  \frac{-D_{x}^{2}\eta\cdot\tau_{2}+D_{x}^{4}\eta
\cdot\tau_{2}-D_{y}^{2}\eta\cdot\tau_{2}}{\tau_{2}^{2}}\right)  =0.
\]
This together with $\left(  \ref{g2}\right)  $ implies
\[
-D_{x}^{2}\eta\cdot\tau_{2}+D_{x}^{4}\eta\cdot\tau_{2}-D_{y}^{2}\eta\cdot
\tau_{2}=0.
\]
The proof is completed.
\end{proof}

\section{Nongeneracy of the lump solution\label{lump}}

In this section, we  prove the nondegeneracy of the lump solution.

\subsection{Linearized B\"{a}cklund transformation between $\tau_{0}$ and
$\tau_{1}\label{Sec3}$}

In terms of $\tau_{0}$ and $\tau_{1},$ the B\"acklund transformation $\left(
\ref{b1}\right)  $ can be written as
\begin{equation}
\left\{
\begin{array}
[c]{l}%
\left(  D_{x}^{2}+\frac{1}{\sqrt{3}}D_{x}+\frac{1}{\sqrt{3}}iD_{y}\right)
\tau_{0}\cdot\tau_{1}=0,\\
\left(  -D_{x}-iD_{y}+D_{x}^{3}-\sqrt{3}iD_{x}D_{y}\right)  \tau_{0}\cdot
\tau_{1}=0.
\end{array}
\right.  \label{l1}%
\end{equation}
Linearizing this system at $\left(  \tau_{0},\tau_{1}\right)  $, we obtain
\begin{equation}
\left\{
\begin{array}
[c]{c}%
L_{1}\phi=G_{1}\eta,\\
M_{1}\phi=N_{1}\eta.
\end{array}
\right.  \label{s1}%
\end{equation}
Here for notational simplicity, we have defined%
\begin{align*}
L_{1}\phi &  =\left(  D_{x}^{2}+\frac{1}{\sqrt{3}}D_{x}+\frac{1}{\sqrt{3}%
}iD_{y}\right)  \phi\cdot\tau_{1},\\
M_{1}\phi &  =\left(  -D_{x}-iD_{y}+D_{x}^{3}-\sqrt{3}iD_{x}D_{y}\right)
\phi\cdot\tau_{1},
\end{align*}
and
\begin{align*}
G_{1}\eta &  =-\left(  D_{x}^{2}+\frac{1}{\sqrt{3}}D_{x}+\frac{1}{\sqrt{3}%
}iD_{y}\right)  \tau_{0}\cdot\eta,\\
N_{1}\eta &  =-\left(  -D_{x}-iD_{y}+D_{x}^{3}-\sqrt{3}iD_{x}D_{y}\right)
\tau_{0}\cdot\eta.
\end{align*}

\begin{proposition}
\label{P1}Let $\eta$ be a solution of the linearized bilinear KP-I equation at
$\tau_{1}:$
\begin{equation}
-D_{x}^{2}\eta\cdot\tau_{1}+D_{x}^{4}\eta\cdot\tau_{1}-D_{y}^{2}\eta\cdot
\tau_{1}=0. \label{yita1}%
\end{equation}
Suppose $\eta$ satisfies $\left(  \ref{es1}\right)  .$ Then the system
$\left(  \ref{s1}\right)  $ has a solution $\phi$ satisfying
\[
\left\vert \phi\right\vert +\left\vert \partial_{x}\phi\right\vert +\left\vert
\partial_{y}\phi\right\vert \leq C\left(  1+r\right)  ^{\frac{5}{2}}.
\]
Moreover,
\begin{equation}
-D_{x}^{2}\phi\cdot\tau_{0}+D_{x}^{4}\phi\cdot\tau_{0}=D_{y}^{2}\phi\cdot
\tau_{0}. \label{constant}%
\end{equation}

\end{proposition}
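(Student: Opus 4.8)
The plan is to treat $(\ref{s1})$ as the linearization at $(\tau_0,\tau_1)$ of the nonlinear Bäcklund system $(\ref{l1})$, itself the specialization of the master identity $(\ref{back})$ to $\lambda=\nu=0$, $\mu=1/\sqrt3$ (with $D_t$ replaced by $-D_x$, since we work with travelling profiles). Writing $\mathcal{A}=-D_x-iD_y+D_x^3-\sqrt3\,iD_xD_y$ and $\mathcal{B}=D_x^2+\tfrac1{\sqrt3}D_x+\tfrac1{\sqrt3}iD_y$ for the two bilinear operators of $(\ref{l1})$, the system $(\ref{s1})$ reads
\[
\mathcal{B}\phi\cdot\tau_1+\mathcal{B}\tau_0\cdot\eta=0,\qquad
\mathcal{A}\phi\cdot\tau_1+\mathcal{A}\tau_0\cdot\eta=0.
\]
Thus there are two tasks: (i) produce a solution $\phi$ of this overdetermined linear system with $|\phi|+|\partial_x\phi|+|\partial_y\phi|\le C(1+r)^{5/2}$, and (ii) check that any such $\phi$ automatically satisfies $(\ref{constant})$. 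Task (ii) is essentially free, so the content is in (i).

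For (i) I would reduce the two equations to a scalar problem. Expanding the bilinear derivatives and using $\tau_0=1$, $\partial_x\tau_1=1$, $\partial_y\tau_1=i$, the first equation of $(\ref{s1})$ becomes
\[
\tau_1\Bigl(\partial_x^2\phi+\tfrac1{\sqrt3}\partial_x\phi+\tfrac{i}{\sqrt3}\partial_y\phi\Bigr)-2\partial_x\phi
=-\partial_x^2\eta+\tfrac1{\sqrt3}\partial_x\eta+\tfrac{i}{\sqrt3}\partial_y\eta,
\]
which is second order in $x$ and first order in $y$. Viewing $y$ as a parameter, this is an inhomogeneous linear problem that one integrates twice from $x=-\infty$, using $(\ref{es1})$ — which gives $|G_1\eta|\le C(1+r)^{3/2}$ — together with control of the homogeneous solutions of $L_1\phi=0$, and selecting the solution whose growth is at worst $(1+r)^{5/2}$. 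The remaining freedom — ``constants of integration'' that are functions of $y$ — is fixed by imposing the second equation $M_1\phi=N_1\eta$; crucially, the second equation is compatible with the first precisely because $\eta$ solves $(\ref{yita1})$: a short computation differentiating $M_1\phi-N_1\eta$ and substituting the first equation together with $(\ref{yita1})$ shows that $M_1\phi-N_1\eta$ satisfies a homogeneous first-order equation and hence vanishes in view of its behavior at infinity. I expect this compatibility/decay bookkeeping — together with the mild degeneracy at the single point where $\tau_1$ vanishes, which needs a little extra care — to be the main technical obstacle; propagating the $(1+r)^{3/2}$ bound on the right-hand side into the $(1+r)^{5/2}$ bound on $\phi$ is then routine.

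Finally, for (ii), differentiate the identity $(\ref{back})$ (with $\lambda=\nu=0$, $\mu=1/\sqrt3$, $D_t\mapsto-D_x$) at $(f,g)=(\tau_0,\tau_1)$ in the direction $(\phi,\eta)$. On the left side, since $\tau_0$ and $\tau_1$ are both travelling-wave solutions of the bilinear equation $(\ref{bi})$, only the terms in which the derivative hits the bilinear KP operator survive; they equal $\tau_1^2\,(-\partial_x^2+\partial_x^4-\partial_y^2)\phi$ minus $\tau_0^2$ times the linearized bilinear expression at $\tau_1$ applied to $\eta$, and the latter is zero by $(\ref{yita1})$. On the right side, since $\mathcal{A}\tau_0\cdot\tau_1=\mathcal{B}\tau_0\cdot\tau_1=0$, only the terms in which the derivative hits the inner bracket survive, and these are $D_x$- and $D_y$-total derivatives of $\mathcal{A}\phi\cdot\tau_1+\mathcal{A}\tau_0\cdot\eta$ and of $\mathcal{B}\phi\cdot\tau_1+\mathcal{B}\tau_0\cdot\eta$, both of which vanish because $(\phi,\eta)$ solves $(\ref{s1})$. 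Hence $\tau_1^2\,(-\partial_x^2+\partial_x^4-\partial_y^2)\phi=0$; since $\tau_1$ vanishes only at one point, continuity gives $(-\partial_x^2+\partial_x^4-\partial_y^2)\phi=0$, which is exactly $(\ref{constant})$ because $D_x^2\phi\cdot\tau_0=\partial_x^2\phi$, $D_x^4\phi\cdot\tau_0=\partial_x^4\phi$ and $D_y^2\phi\cdot\tau_0=\partial_y^2\phi$ when $\tau_0=1$.
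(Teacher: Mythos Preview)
Your argument for (ii) is correct and is exactly what the paper does: linearize the identity $(\ref{back})$ at $(\tau_0,\tau_1)$, use that both are bilinear solutions and that $\mathcal{A}\tau_0\cdot\tau_1=\mathcal{B}\tau_0\cdot\tau_1=0$, and conclude $(\ref{constant})$.

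The gap is in (i). The first equation $L_1\phi=G_1\eta$ contains $\partial_y\phi$, so ``viewing $y$ as a parameter'' does not give an $x$-ODE, and you cannot ``integrate twice from $x=-\infty$'' in the way you describe. The paper's reduction is the opposite: use the first equation to \emph{eliminate} $\partial_y\phi$ (this is $(\ref{dy})$), substitute into the second, and obtain a genuine third-order $x$-ODE for $\phi$, namely $(\ref{s2})$. One then needs explicit fundamental solutions $g_1,g_2$ of the associated homogeneous second-order equation (Lemma~\ref{h1}) to write a particular solution $w_0$ by variation of parameters. This $w_0$ solves $(\ref{s2})$ for every $y$ but does not yet satisfy the first equation; one adds a combination $\rho_0\xi_0+\rho_1\xi_1+\rho_2\xi_2$ of explicit kernel elements (Lemma~\ref{ki}) so that $\Phi_0:=L_1\phi-G_1\eta$ and $\partial_x\Phi_0,\partial_x^2\Phi_0$ vanish on the line $x=1$.

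Your compatibility claim is also mis-stated. It is not $M_1\phi-N_1\eta$ that satisfies a first-order equation; rather, the defect $\Phi_0$ of the \emph{first} equation satisfies a \emph{third-order} linear homogeneous $x$-ODE (Proposition~\ref{p1}) whenever $\phi$ solves $(\ref{s2})$ and $\eta$ solves $(\ref{yita1})$. This is the key identity, and it is what propagates $\Phi_0=\partial_x\Phi_0=\partial_x^2\Phi_0=0$ from $x=1$ to all $x$. Finally, the growth bound is not routine: $w_0$ obeys $|w_0|\le C(1+r)^{5/2}$ only for $x\le 10$; to extend it one must show $\rho_1\equiv\rho_2\equiv 0$ (Lemma~\ref{ro}) and that an additional coefficient $k(y)$ coming from the $\xi_1$-component of $w_0$ as $x\to+\infty$ also vanishes. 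Both steps use the asymptotics of the explicit homogeneous solutions together with $(\ref{es1})$.
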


The rest of this sub-section will be devoted to the proof of Proposition \ref{P1}.

First of all, from the first equation in $\left(  \ref{s1}\right)  $, we get
\begin{equation}
\partial_{y}\phi\tau_{1}=i\left[  \partial_{x}\phi\tau_{1}+\sqrt{3}\left(
\partial_{x}^{2}\phi\tau_{1}-2\partial_{x}\phi\right)  \right]  -\sqrt
{3}iG_{1}\eta. \label{dy}%
\end{equation}
Inserting $\left(  \ref{dy}\right)  $ into the right hand side of the second
equation of $\left(  \ref{s1}\right)  $, we get
\begin{equation}
4\partial_{x}^{3}\phi\tau_{1}+\left(  2\sqrt{3}\tau_{1}-12\right)
\partial_{x}^{2}\phi+\left(  -4\sqrt{3}+\frac{12}{\tau_{1}}\right)
\partial_{x}\phi=F_{1}. \label{s2}%
\end{equation}
Here the right hand side is defined by
\begin{align*}
F_{1}  &  =3\partial_{x}\left(  G_{1}\eta\right)  +\sqrt{3}G_{1}\eta+N_{1}%
\eta-\frac{6}{\tau_{1}}G_{1}\eta\\
&  =-2\partial_{x}^{3}\eta+2\sqrt{3}i\partial_{x}\partial_{y}\eta-\frac
{6}{\tau_{1}}G_{1}\eta.
\end{align*}
To solve the equation $\left(  \ref{s2}\right)  $, we shall analyze the
solutions of the homogeneous equation%
\begin{equation}
2\tau_{1}^{2}g^{\prime\prime}+\left(  \sqrt{3}\tau_{1}-6\right)  \tau
_{1}g^{\prime}+\left(  6-2\sqrt{3}\tau_{1}\right)  g=0. \label{g}%
\end{equation}

\begin{lemma}
\label{h1}The equation $\left(  \ref{g}\right)  $ has two linearly independent
solutions
\[
g_{1}=\tau_{1}-\frac{\sqrt{3}}{2}\tau_{1}^{2},\text{ and }g_{2}=\tau
_{1}e^{-\frac{\sqrt{3}}{2}\tau_{1}}.
\]

\end{lemma}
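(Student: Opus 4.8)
The plan is to regard $\left(\ref{g}\right)$ as an honest second–order linear ODE in the single variable $\tau_{1}=x+iy+\sqrt{3}$, with $'$ meaning $d/d\tau_{1}$; this is legitimate because in $\left(\ref{s2}\right)$ the variable $y$ is frozen and $\partial_{x}\tau_{1}=1$, so for a function $g$ of $\tau_{1}$ alone one has $\partial_{x}g=g'(\tau_{1})$. Dividing $\left(\ref{g}\right)$ by $2\tau_{1}^{2}$ puts it in normal form
\[
g''+\Bigl(\tfrac{\sqrt{3}}{2}-\tfrac{3}{\tau_{1}}\Bigr)g'+\tfrac{3-\sqrt{3}\tau_{1}}{\tau_{1}^{2}}\,g=0 ,
\]
which has a regular singular point at $\tau_{1}=0$ with indicial equation $s^{2}-4s+3=0$, i.e.\ roots $s=1,3$. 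This already suggests looking for a solution of leading order $\tau_{1}$, and since the coefficients are polynomials of small degree one may hope for a terminating (polynomial) solution.

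First I would try the ansatz $g=a\tau_{1}+b\tau_{1}^{2}$ in $\left(\ref{g}\right)$: substituting and collecting the powers $\tau_{1}^{3},\tau_{1}^{2},\tau_{1}$ one finds that the cubic and linear coefficients vanish automatically while the quadratic coefficient forces $b=-\tfrac{\sqrt{3}}{2}a$; normalizing $a=1$ gives $g_{1}=\tau_{1}-\tfrac{\sqrt{3}}{2}\tau_{1}^{2}$. For the second, independent solution I would use reduction of order. Abel's identity applied to the normal form above gives the Wronskian $W(\tau_{1})=C\,\tau_{1}^{3}e^{-\sqrt{3}\tau_{1}/2}$, hence $g_{2}=g_{1}\int W/g_{1}^{2}\,d\tau_{1}$. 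Since $g_{1}^{2}=\tau_{1}^{2}\bigl(1-\tfrac{\sqrt{3}}{2}\tau_{1}\bigr)^{2}$, the substitution $u=1-\tfrac{\sqrt{3}}{2}\tau_{1}$ turns the integrand into a constant multiple of $\tfrac{(1-u)e^{u}}{u^{2}}$, and the elementary antiderivative $\int\tfrac{(1-u)e^{u}}{u^{2}}\,du=-\tfrac{e^{u}}{u}$ (just $\tfrac{d}{du}\tfrac{e^{u}}{u}=\tfrac{(u-1)e^{u}}{u^{2}}$ read backwards) makes the factor $1-\tfrac{\sqrt{3}}{2}\tau_{1}$ cancel, leaving $g_{2}=\tau_{1}e^{-\sqrt{3}\tau_{1}/2}$ up to a multiplicative constant. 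As a cross–check one can instead substitute $g_{2}=\tau_{1}e^{\alpha\tau_{1}}$ with $\alpha=-\sqrt{3}/2$ directly into $\left(\ref{g}\right)$: the left side becomes $e^{\alpha\tau_{1}}$ times a cubic in $\tau_{1}$ with coefficients $2\alpha^{2}+\sqrt{3}\alpha$, $-2\alpha-\sqrt{3}$ and $0$, each of which vanishes for this value of $\alpha$.

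Finally, linear independence is immediate: $g_{1}$ is a polynomial and $g_{2}$ carries a genuine exponential factor, so they cannot be proportional; equivalently $W(\tau_{1})=C\,\tau_{1}^{3}e^{-\sqrt{3}\tau_{1}/2}\not\equiv0$. I do not anticipate any real difficulty here — the statement is essentially a computation — the only mildly delicate point being the bookkeeping in the reduction–of–order quadrature, which the substitution $u=1-\tfrac{\sqrt{3}}{2}\tau_{1}$ together with the antiderivative above keeps under control. (One should remember that $g_{1},g_{2}$ are defined only away from $\tau_{1}=0$, but this is harmless since they are needed merely as local building blocks when solving the inhomogeneous equation $\left(\ref{s2}\right)$.)
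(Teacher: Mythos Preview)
Your proof is correct. The paper's own proof is simply ``this can be verified directly, e.g.\ with \textit{Maple},'' whereas you actually derive the two solutions: you read off the indicial roots $s=1,3$ at the regular singular point $\tau_{1}=0$, obtain $g_{1}$ by a polynomial ansatz, and then recover $g_{2}$ by Abel/reduction of order (with the pleasant cancellation coming from $\int\tfrac{(1-u)e^{u}}{u^{2}}du=-\tfrac{e^{u}}{u}$), followed by the direct substitution cross-check $g_{2}=\tau_{1}e^{\alpha\tau_{1}}$ with $\alpha=-\sqrt{3}/2$. This is more informative than the paper's one-line verification and also explains \emph{why} those particular closed forms arise, but in the end both arguments reduce to plugging the candidates into~$\left(\ref{g}\right)$, so the content is the same.
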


\begin{proof}
This can be verified directly. Indeed, the equation $\left(  \ref{g}\right)  $
can be exactly solved using software such as \textit{Maple}. We will
frequently use this software in the rest of paper.
\end{proof}

Let $W$ be the Wronskian of the two solutions $g_{1},g_{2}.$ That is,
\[
W:=g_{1}\partial_{x}g_{2}-g_{2}\partial_{x}g_{1}=\frac{3}{4}\tau_{1}%
^{3}e^{-\frac{\sqrt{3}}{2}\tau_{1}}.
\]
By the variation of parameter formula, the equation
\[
4\tau_{1}g^{\prime\prime}+\left(  2\sqrt{3}\tau_{1}-12\right)  g^{\prime
}+\left(  \frac{12}{\tau_{1}}-4\sqrt{3}\right)  g=F_{1}%
\]
has a solution of the form
\[
g^{\ast}\left(  x,y\right)  =g_{2}\left(  x,y\right)  \int_{-\infty}^{x}%
\frac{g_{1}F_{1}}{4\tau_{1}W}ds-g_{1}\left(  x,y\right)  \int_{-\infty}%
^{x}\frac{g_{2}F_{1}}{4\tau_{1}W}ds.
\]
It follows that for each fixed $y,$ the equation
\[
4\tau_{1}\phi^{\prime\prime\prime}+\left(  2\sqrt{3}\tau_{1}-12\right)
\phi^{\prime\prime}+\left(  \frac{12}{\tau_{1}}-4\sqrt{3}\right)  \phi
^{\prime}=F
\]
has a solution of the form
\begin{equation}
w_{0}\left(  x,y\right)  =\int_{0}^{x}g^{\ast}\left(  s,y\right)  ds.
\label{fi}%
\end{equation}
We emphasize that $\frac{1}{\tau_{1}}$ has a singularity at the point $\left(
x,y\right)  =\left(  -\sqrt{3},0\right)  .$ Therefore we need to be very
careful about the behavior of $w_{0}$ around this singular point.

\begin{lemma}
Suppose $\eta$ satisfies $\left(  \ref{es1}\right)  .$ Then
\[
\left\vert w_{0}\right\vert \leq C\left(  1+r\right)  ^{\frac{5}{2}},\text{
for }x\leq10.
\]

\end{lemma}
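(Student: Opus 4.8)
The plan is to obtain the bound by tracking how each factor in the formula \eqref{fi} for $w_0$ behaves, paying special attention to the neighborhood of the singular point $(-\sqrt3,0)$ where $\tau_1$ vanishes. Recall $w_0(x,y)=\int_0^x g^*(s,y)\,ds$ where
\[
g^*(x,y)=g_2\int_{-\infty}^x\frac{g_1F_1}{4\tau_1W}\,ds-g_1\int_{-\infty}^x\frac{g_2F_1}{4\tau_1W}\,ds,
\]
$g_1=\tau_1-\tfrac{\sqrt3}{2}\tau_1^2$, $g_2=\tau_1 e^{-\frac{\sqrt3}{2}\tau_1}$, and $W=\tfrac34\tau_1^3 e^{-\frac{\sqrt3}{2}\tau_1}$. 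First I would simplify the integrand kernels: $\dfrac{g_1}{4\tau_1 W}=\dfrac{1-\frac{\sqrt3}{2}\tau_1}{3\tau_1^3}e^{\frac{\sqrt3}{2}\tau_1}$ and $\dfrac{g_2}{4\tau_1 W}=\dfrac{1}{3\tau_1^3}$. So the two inner integrals are $\int_{-\infty}^x\frac{(1-\frac{\sqrt3}{2}\tau_1)F_1}{3\tau_1^3}e^{\frac{\sqrt3}{2}\tau_1}\,ds$ and $\int_{-\infty}^x\frac{F_1}{3\tau_1^3}\,ds$, and then $g^*$ multiplies these by $g_2=\tau_1 e^{-\frac{\sqrt3}{2}\tau_1}$ and $g_1=\tau_1(1-\frac{\sqrt3}{2}\tau_1)$ respectively.

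Next I would record the needed size information on $F_1$. From its definition, $F_1=-2\partial_x^3\eta+2\sqrt3\, i\,\partial_x\partial_y\eta-\frac{6}{\tau_1}G_1\eta$, and $G_1\eta=-(D_x^2+\frac{1}{\sqrt3}D_x+\frac{1}{\sqrt3}iD_y)\tau_0\cdot\eta$ involves only $\eta$ and its first two derivatives (since $\tau_0\equiv1$). By the hypothesis \eqref{es1}, all of $\eta,\partial_x\eta,\partial_y\eta,\partial_x^2\eta,\partial_x\partial_y\eta,\partial_x^3\eta$ are bounded by $C(1+r)^{5/2}$. Hence, away from the singularity, $|F_1|\le C(1+r)^{5/2}$, while near $(-\sqrt3,0)$ the term $\frac{6}{\tau_1}G_1\eta$ contributes an extra factor $|\tau_1|^{-1}$, so $|F_1|\le C|\tau_1|^{-1}$ there. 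In the region $x\le 10$ we also have $|\tau_1|=|x+iy+\sqrt3|\le C(1+r)$, and $\mathrm{Re}\,\tau_1=x+\sqrt3$ is bounded above, which is what makes the exponential factors harmless.

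The substantive work is then a case split along the path of integration. Away from the singular point, $|\tau_1|^{-1}$ is bounded, $e^{\pm\frac{\sqrt3}{2}\tau_1}$ is bounded on $\{x\le 10\}$, the polynomial factors $g_1,g_2$ are $O((1+r)^2)$, and the inner integrands are $O((1+r)^{1/2})$ in absolute value after accounting for the $\tau_1^{-3}$ against the decay built into $F_1$ through the $\partial_x^{-2}$-type regularization already present in $\eta$; carrying out the two nested $s$-integrals over a region of size $O(r)$ produces at worst $(1+r)^{5/2}$, as claimed. Near $(-\sqrt3,0)$ one checks that the apparent singularities cancel: the factor $\tau_1^{-3}$ in the kernel is multiplied by $\tau_1$ from $g_1$ or $g_2$ when forming $g^*$, and one uses that $F_1=O(|\tau_1|^{-1})$ together with $\int \frac{ds}{|\tau_1|^{k}}$ being integrable for $k<1$ in one variable — more precisely, along a horizontal line through $y\ne 0$ there is no singularity at all, and for the line $y=0$ one verifies directly (with \emph{Maple}, as the authors note they do) that the combination entering $g^*$ and then $w_0$ extends continuously, since the construction of $w_0$ via \eqref{fi} was designed so that $w_0$ solves a third-order ODE whose only singular coefficient is $\frac{12}{\tau_1}-4\sqrt3$ multiplying $\phi'$, a Fuchsian-type singularity whose local solutions are bounded. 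I expect the main obstacle to be precisely this local analysis at $(-\sqrt3,0)$: showing that the individually singular inner integrals $\int_{-\infty}^x\frac{(\cdots)F_1}{\tau_1^3}\,ds$ combine, after multiplication by $g_1,g_2$ and a further $s$-integration, into a function with no singularity and the stated polynomial growth. Everything else is a routine majorization of nested integrals over a ball of radius $r$ with a fixed polynomial integrand on the half-plane $\{x\le 10\}$.
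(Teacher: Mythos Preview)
Your overall strategy matches the paper's, but there are two genuine gaps.

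First, you misread the hypothesis \eqref{es1}. That estimate says
\[
|\eta|+\bigl(|\partial_x\eta|+|\partial_y\eta|+|\partial_x^2\eta|+|\partial_x\partial_y\eta|+|\partial_x^3\eta|\bigr)(1+r)\le C(1+r)^{5/2},
\]
so the derivatives themselves are $O((1+r)^{3/2})$, not $O((1+r)^{5/2})$. Since $F_1=-2\partial_x^3\eta+2\sqrt3\,i\,\partial_x\partial_y\eta-\frac{6}{\tau_1}G_1\eta$ is built entirely from derivatives of $\eta$, the correct bound (away from the singularity) is $|F_1|\le C(1+r)^{3/2}$. This is not a cosmetic point: with your weaker bound the second inner integral has integrand $\bigl|\frac{g_2F_1}{4\tau_1W}\bigr|=\bigl|\frac{F_1}{3\tau_1^3}\bigr|\le C(1+r)^{-1/2}$, and $\int_{-\infty}^x(1+r(s))^{-1/2}\,ds$ diverges. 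With the correct $|F_1|\le C(1+r)^{3/2}$ the integrand is $O((1+r)^{-3/2})$, which is integrable, and after multiplying by $|g_1|\le C(1+r)^2$ one gets $|g^*|\le C(1+r)^{3/2}$; one more integration in $x$ then gives the claimed $(1+r)^{5/2}$. Your remark about ``decay built into $F_1$ through the $\partial_x^{-2}$-type regularization'' does not rescue this, because $\eta$ grows; the gain comes purely from the factor $(1+r)$ weighting the derivatives in \eqref{es1}.

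Second, your treatment of the singular point $(-\sqrt3,0)$ is not an argument. The paper's proof uses a concrete cancellation: expanding $g_2=\tau_1e^{-\frac{\sqrt3}{2}\tau_1}$ shows $g_2-g_1=O(\tau_1^3)$, which, combined with $W=\frac34\tau_1^3e^{-\frac{\sqrt3}{2}\tau_1}$, is exactly what makes the combination $g_2\int\frac{g_1F_1}{4\tau_1W}-g_1\int\frac{g_2F_1}{4\tau_1W}$ bounded near $\tau_1=0$ despite each piece being singular. Invoking Fuchsian local theory or Maple in the abstract does not substitute for identifying this cancellation; you should write $g^*=g_2(I_1-I_2)+(g_2-g_1)I_2$ (or the symmetric variant) and use $g_2-g_1=O(\tau_1^3)$ explicitly.
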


\begin{proof}
Since $\eta$ satisfies $\left(  \ref{es1}\right)  ,$ we have%
\[
\left\vert F_{1}\right\vert \leq\left(  1+r\right)  ^{\frac{3}{2}}.
\]
Hence using the asymptotic behavior of $W$ and $g_{1},g_{2},$ we find that for
$r$ large,
\begin{align*}
\left\vert \frac{g_{1}F_{1}}{W\tau_{1}}\right\vert  &  \leq Ce^{\frac{\sqrt
{3}}{2}x}\left(  1+r\right)  ^{-\frac{1}{2}},\\
\left\vert \frac{g_{2}F_{1}}{W\tau_{1}}\right\vert  &  \leq C\left(
1+r\right)  ^{-\frac{3}{2}}.
\end{align*}
Near the singular point $\left(  -\sqrt{3},0\right)  $, using the fact that
\[
g_{2}-g_{1}=O\left(  \tau_{1}^{3}\right)  ,
\]
we infer
\[
\left\vert g^{\ast}\right\vert \leq C.
\]
As a consequence, for $x\leq10,$ we obtain
\[
\left\vert g^{\ast}\right\vert \leq C\left(  1+r\right)  ^{\frac{3}{2}}.
\]
It follows that for $x\leq10,$%
\begin{equation}
\left\vert w_{0}\right\vert \leq C\left(  1+r\right)  ^{\frac{5}{2}}.
\label{gr1}%
\end{equation}
We remark that as $x\rightarrow+\infty,$ since $g_{1}=O\left(  \tau_{1}%
^{2}\right)  ,$ $w_{0}$ may not satisfy $\left(  \ref{gr1}\right)  .$
\end{proof}

\begin{lemma}
\label{ki}The functions $\xi_{0}\left(  x,y\right)  :=1,$%
\[
\xi_{1}:=\frac{1}{2}\tau_{1}^{2}-\frac{\sqrt{3}}{6}\tau_{1}^{3},
\]
and
\[
\xi_{2}:=\left(  \frac{\sqrt{3}}{2}\tau_{1}+1\right)  e^{-\frac{\sqrt{3}}%
{2}x+\frac{\sqrt{3}}{4}yi}%
\]
solve the homogeneous system
\[
\left\{
\begin{array}
[c]{c}%
L_{1}\phi=0,\\
M_{1}\phi=0.
\end{array}
\right.
\]

\end{lemma}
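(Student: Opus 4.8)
The plan is to verify directly that each of $\xi_0$, $\xi_1$, $\xi_2$ lies in the kernel of both $L_1$ and $M_1$, exploiting the fact that the homogeneous system $L_1\phi = 0$, $M_1\phi = 0$ is precisely the linearization at $(\tau_0,\tau_1)$ of the B\"acklund system \eqref{l1}; equivalently, each $\xi_j$ should arise as $\partial_\varepsilon$ of a one-parameter family of $\tau_0$-type functions that are still B\"acklund-paired with $\tau_1$ (or with a correspondingly deformed $\tau_1$). Before grinding through computation I would first unwind $L_1$ and $M_1$ using the explicit bilinear derivative formulas already recorded in Section \ref{Sec2}: since $\tau_0 \equiv 1$ and $\tau_1 = x+iy+\sqrt 3$, we have $\partial_x\tau_1 = 1$, $\partial_y\tau_1 = i$, and $\partial_x^2\tau_1 = \partial_x^3\tau_1 = 0$, so
\[
L_1\phi = \partial_x^2\phi\,\tau_1 - 2\partial_x\phi + \tfrac{1}{\sqrt 3}\bigl(\partial_x\phi\,\tau_1 - \phi\bigr) + \tfrac{i}{\sqrt 3}\bigl(\partial_y\phi\,\tau_1 - i\phi\bigr),
\]
and similarly $M_1\phi$ expands into a combination of $\partial_x\phi$, $\partial_y\phi$, $\partial_x^3\phi$, $\partial_x^2\partial_y\phi$ with coefficients that are polynomials of degree at most one in $\tau_1$ (the $-D_x$ and $-iD_y$ pieces contribute the first-order terms, the $D_x^3$ and $-\sqrt 3 i D_x D_y$ pieces the third-order ones). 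The constant $\xi_0 = 1$ is trivial: all derivatives vanish and the surviving algebraic terms $-\tfrac{1}{\sqrt 3} + \tfrac{i}{\sqrt 3}\cdot(-i) = -\tfrac{1}{\sqrt 3} + \tfrac{1}{\sqrt 3} = 0$ cancel in $L_1$, while $M_1(1) = 0$ since every term in $M_1$ carries at least one derivative of $\phi$.

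For $\xi_1$ and $\xi_2$ the verification is a finite computation: $\xi_1$ is a polynomial of degree $3$ in $\tau_1$ (hence in $x,y$), so all derivatives beyond order $3$ vanish and one simply plugs $\partial_x\xi_1 = \tau_1 - \tfrac{\sqrt 3}{2}\tau_1^2$, $\partial_x^2\xi_1 = 1 - \sqrt 3\,\tau_1$, $\partial_x^3\xi_1 = -\sqrt 3$, together with $\partial_y\xi_1 = i\partial_x\xi_1$ and the corresponding mixed derivatives (which are $i$ times the pure $x$-derivatives because $\xi_1$ depends on $x,y$ only through $\tau_1 = x+iy+\sqrt 3$), and collects powers of $\tau_1$; each coefficient should vanish identically. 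Note that $g_1 = \tau_1 - \tfrac{\sqrt 3}{2}\tau_1^2 = \partial_x\xi_1$ is exactly the first homogeneous solution from Lemma \ref{h1}, which is the structural reason $\xi_1$ works. For $\xi_2 = \bigl(\tfrac{\sqrt 3}{2}\tau_1 + 1\bigr)e^{-\frac{\sqrt 3}{2}x + \frac{\sqrt 3}{4}yi}$, write the exponential factor as $e^{\theta}$ with $\theta = -\tfrac{\sqrt 3}{2}x + \tfrac{\sqrt 3}{4}yi$; then $\partial_x e^\theta = -\tfrac{\sqrt 3}{2}e^\theta$ and $\partial_y e^\theta = \tfrac{\sqrt 3}{4}i\,e^\theta$, and one computes $\partial_x\xi_2$, $\partial_x^2\xi_2$, $\partial_x^3\xi_2$, $\partial_y\xi_2$, $\partial_x\partial_y\xi_2$, $\partial_x^2\partial_y\xi_2$ by the product rule, factors out $e^\theta$, and checks that the remaining (at most linear in $\tau_1$) expressions in $L_1\xi_2$ and $M_1\xi_2$ vanish. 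Here $g_2 = \tau_1 e^{-\frac{\sqrt 3}{2}\tau_1}$ from Lemma \ref{h1} is the relevant companion, up to the $y$-dependence encoded in $\theta$.

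The only genuine subtlety — and the step I would be most careful about — is bookkeeping the $y$-derivatives consistently: because $\xi_1$ depends on $(x,y)$ holomorphically through $\tau_1$ while $\xi_2$ does \emph{not} (its exponent $-\tfrac{\sqrt 3}{2}x + \tfrac{\sqrt 3}{4}yi$ is not a multiple of $x+iy$), the relation between $\partial_y$ and $\partial_x$ differs between the two cases, and the imaginary coefficients $\tfrac{1}{\sqrt 3}i$ in $L_1$ and $-i$, $-\sqrt 3 i$ in $M_1$ must be tracked precisely. As Lemma \ref{h1} already remarks, all of this is mechanical and can be confirmed with a symbolic algebra package; the substantive content is simply the assertion that these three explicit functions span (the relevant part of) $\ker L_1 \cap \ker M_1$, which is what the subsequent argument needs. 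I would therefore present the proof as: expand $L_1,M_1$ on functions of the stated form, reduce to polynomial/exponential identities in $\tau_1$, and verify each identity termwise.
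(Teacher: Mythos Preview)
Your approach is essentially the paper's: the paper's proof reads ``This can be checked directly. Alternatively, we can look for solutions of $L_1\phi=0$ in the form $f_1(y)\partial_x^{-1}g_1$ and $f_2(y)\partial_x^{-1}g_2$,'' which is exactly the direct verification you outline, together with the same structural observation linking $\xi_j$ to the $g_j$ of Lemma~\ref{h1}.

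One small slip to fix: your justification that $M_1(1)=0$ ``since every term in $M_1$ carries at least one derivative of $\phi$'' is not correct. The first-order pieces $-D_x$ and $-iD_y$ contribute, for $\phi\equiv 1$, the terms $+\,\phi\,\partial_x\tau_1=1$ and $+\,i\phi\,\partial_y\tau_1=i\cdot i=-1$ respectively, neither of which involves a derivative of $\phi$; the point is that these two contributions cancel (and the $D_x^3$ and $D_xD_y$ pieces vanish because $\partial_x^3\tau_1=\partial_x\partial_y\tau_1=0$). This does not affect your overall plan, but the stated reason should be replaced by the actual cancellation.
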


\begin{proof}
This can be checked directly. Alternatively, we can look for solutions of
$L_{1}\phi=0$ in the form $f_{1}\left(  y\right)  \partial_{x}^{-1}g_{1}$ and
$f_{2}\left(  y\right)  \partial_{x}^{-1}g_{2}.$ This reduces to an ODE for
the unknown functions $f_{1}$ and $f_{2}$ and and solved.
\end{proof}

For given function $\eta,$ we have seen from $\left(  \ref{fi}\right)  $ that
the ODE $\left(  \ref{s2}\right)  $ for the unknown function $\phi$ can be
solved for each fixed $y.$ To solve the whole system $\left(  \ref{s1}\right)
,$ we define
\[
\Phi_{0}\left(  x,y\right)  :=L_{1}\phi-G_{1}\eta,
\]
and
\[
\Phi_{1}=\partial_{x}\Phi_{0},\Phi_{2}=\partial_{x}^{2}\Phi_{0}.
\]
Note that $\Phi_{i}$ actually depends on the function $\phi.$

Consider the system of equations
\begin{equation}
\left\{
\begin{array}
[c]{c}%
\Phi_{0}\left(  x,y\right)  =0,\\
\Phi_{1}\left(  x,y\right)  =0,\\
\Phi_{2}\left(  x,y\right)  =0,
\end{array}
\right.  \ \text{for }x=1. \label{1}%
\end{equation}
We seek a solution $\phi$ of $\left(  \ref{1}\right)  $ in the form
$w_{0}+w_{1},$ where $w_{0}$ is defined in $\left(  \ref{fi}\right)  $ and
\[
w_{1}\left(  x,y\right)  =\rho_{0}\left(  y\right)  \xi_{0}\left(  x,y\right)
+\rho_{1}\left(  y\right)  \xi_{1}\left(  x,y\right)  +\rho_{2}\left(
y\right)  \xi_{2}\left(  x,y\right)  ,
\]
for some unknown functions $\rho_{0},\rho_{1},\rho_{2}.$

\begin{lemma}
\label{initial}System $\left(  \ref{1}\right)  $ has a solution $\left(
\rho_{0},\rho_{1},\rho_{2}\right)  $ with the initial condition
\[
\rho_{i}\left(  0\right)  =0,\text{ }i=0,1,2.
\]

\end{lemma}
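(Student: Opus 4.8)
The plan is to convert the three pointwise conditions in $\left(\ref{1}\right)$ at $x=1$ into a linear system of ODEs for the vector $\left(\rho_{0},\rho_{1},\rho_{2}\right)$ in the $y$ variable, and then invoke the standard existence theorem for linear ODEs. Recall that $\xi_{0},\xi_{1},\xi_{2}$ from Lemma \ref{ki} are exact solutions of the homogeneous system $L_{1}\phi=0$, $M_{1}\phi=0$; in particular each $\xi_{j}$ makes $\Phi_{0}=L_{1}\xi_{j}-G_{1}\cdot 0$ vanish identically. However, $\Phi_{0}$, $\Phi_{1}=\partial_{x}\Phi_{0}$ and $\Phi_{2}=\partial_{x}^{2}\Phi_{0}$ are \emph{affine} in $\phi$ (linear plus the inhomogeneous term $-G_{1}\eta$), and the operator $L_{1}$ contains a $D_{y}$ component, so when we substitute $\phi=w_{0}+\rho_{0}\xi_{0}+\rho_{1}\xi_{1}+\rho_{2}\xi_{2}$ and differentiate, the $y$-derivatives hitting the coefficients $\rho_{j}\left(y\right)$ survive. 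Writing $\Phi_{0}$ evaluated along $x=1$ as a function of $y$, we get
\[
\Phi_{0}\big|_{x=1}=A\left(y\right)\vec{\rho}\,'\left(y\right)+B\left(y\right)\vec{\rho}\left(y\right)+h_{0}\left(y\right),
\]
where $\vec{\rho}=\left(\rho_{0},\rho_{1},\rho_{2}\right)^{T}$, the matrix $A\left(y\right)$ collects the coefficients of $\partial_{y}\xi_{j}$ produced by the $\frac{1}{\sqrt 3}iD_{y}$ term in $L_{1}$ (evaluated at $x=1$), $B\left(y\right)$ collects the remaining algebraic contributions, and $h_{0}\left(y\right)=L_{1}w_{0}-G_{1}\eta$ restricted to $x=1$. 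Differentiating the defining relation in $x$ and then restricting to $x=1$ gives analogous expressions for $\Phi_{1}\big|_{x=1}$ and $\Phi_{2}\big|_{x=1}$, each again of the form $\tilde A_{k}\vec{\rho}\,'+\tilde B_{k}\vec{\rho}+h_{k}$, because only one $y$-derivative ever appears (the operators are first order in $D_{y}$). Stacking the three equations yields a $3\times 3$ system $\mathbf{A}\left(y\right)\vec{\rho}\,'\left(y\right)=-\mathbf{B}\left(y\right)\vec{\rho}\left(y\right)-\vec{h}\left(y\right)$.

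The crucial point is that $\mathbf{A}\left(y\right)$ is invertible for all $y$ (at least on a neighbourhood of $0$, which suffices, though one expects it everywhere). This is exactly the statement that the linear map sending $\left(\rho_{0},\rho_{1},\rho_{2}\right)$ to the triple $\left(\partial_{y}$-parts of $\Phi_{0},\Phi_{1},\Phi_{2}$ at $x=1\right)$ is nondegenerate, and it follows from a direct computation: $\mathbf{A}\left(y\right)$ is, up to the scalar $\frac{i}{\sqrt 3}$, the matrix whose $k$-th row is $\left(\partial_{x}^{k}\xi_{0},\partial_{x}^{k}\xi_{1},\partial_{x}^{k}\xi_{2}\right)\big|_{x=1}$ for $k=0,1,2$, i.e. essentially the Wronskian-type determinant of $\xi_{0},\xi_{1},\xi_{2}$ in the $x$ variable at $x=1$. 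Since $\xi_{0}=1$ is constant, $\xi_{1}$ is a cubic polynomial in $\tau_{1}=x+iy+\sqrt 3$ with nonzero leading coefficient, and $\xi_{2}$ carries the exponential factor $e^{-\frac{\sqrt 3}{2}x+\frac{\sqrt 3}{4}yi}$, these three functions are linearly independent with non-vanishing Wronskian, so the determinant at $x=1$ is a nonzero (entire) function of $y$; one checks it is nonzero at $y=0$ and hence on an interval around $0$. \emph{This verification — that the relevant $3\times 3$ determinant does not vanish — is the main obstacle, and it is handled by the explicit formulas (computing with \textit{Maple} as the authors do elsewhere).}

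Granting invertibility of $\mathbf{A}\left(y\right)$, we obtain $\vec{\rho}\,'\left(y\right)=\mathbf{C}\left(y\right)\vec{\rho}\left(y\right)+\vec{g}\left(y\right)$ with $\mathbf{C}=-\mathbf{A}^{-1}\mathbf{B}$ and $\vec g=-\mathbf{A}^{-1}\vec h$ continuous (indeed smooth) in $y$ near $0$, since $w_{0}$ and $\eta$ are smooth and $\tau_{1}$ does not vanish at $x=1$. By the Picard–Lindelöf theorem for linear systems, there is a unique solution $\vec{\rho}\left(y\right)$ on a maximal interval containing $0$ with the prescribed initial value $\vec{\rho}\left(0\right)=0$; linearity of the system guarantees the solution does not blow up in finite time, so it extends to all $y$ (and in particular to the period interval needed later). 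Setting $\phi=w_{0}+\rho_{0}\xi_{0}+\rho_{1}\xi_{1}+\rho_{2}\xi_{2}$ with these $\rho_{j}$ then satisfies $\Phi_{0}=\Phi_{1}=\Phi_{2}=0$ along $x=1$, which is precisely System $\left(\ref{1}\right)$, completing the proof.
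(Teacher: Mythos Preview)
Your approach is essentially the same as the paper's: reduce the three conditions at $x=1$ to a first-order linear ODE system in $y$ for $(\rho_0,\rho_1,\rho_2)$, with coefficient matrix $\mathcal{A}$ given (up to the scalar $\frac{i\tau_1}{\sqrt{3}}$) by the Wronskian of $\xi_0,\xi_1,\xi_2$ in $x$, and then integrate. One simplification you missed is that since each $\xi_j$ satisfies $L_1\xi_j=0$ \emph{exactly}, your matrix $\mathbf{B}(y)$ is identically zero; the system is simply $\mathcal{A}(1,y)\vec{\rho}\,'=\vec{H}(1,y)$, and the paper writes the solution directly as $\vec{\rho}(y)=\int_0^y \mathcal{A}^{-1}(1,s)\vec{H}(1,s)\,ds$ without invoking Picard--Lindel\"of.
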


\begin{proof}
The equation $\Phi_{0}=0$ can be written as%
\[
L_{1}w_{1}=-L_{1}w_{0}+G_{1}\eta:=H_{0}.
\]
Similarly, we write $\Phi_{1}=0$ as%
\[
\partial_{x}\left[  L_{1}w_{1}\right]  =-\partial_{x}\left[  L_{1}%
w_{0}\right]  +\partial_{x}\left[  G_{1}\eta\right]  :=H_{1}.
\]
The equation $\Phi_{2}=0$ can be written as%
\[
\partial_{x}^{2}\left[  L_{1}w_{1}\right]  =-\partial_{x}^{2}\left[
L_{1}w_{0}\right]  +\partial_{x}^{2}\left[  G_{1}\eta\right]  :=H_{2}.
\]
Consider the system
\begin{equation}
\left\{
\begin{array}
[c]{l}%
L_{1}w_{1}=0,\\
\partial_{x}\left[  L_{1}w_{1}\right]  =0,\\
\partial_{x}^{2}\left[  L_{1}w_{1}\right]  =0.
\end{array}
\right.  \label{sys3}%
\end{equation}
In view of the definition of $w_{1},$ we know that $\left(  \ref{sys3}\right)
$ is a homogeneous system of first order differential equations for the
functions $\rho_{0},\rho_{1},\rho_{2}.$ Explicitly, $\left(  \ref{sys3}%
\right)  $ has the form
\[
\mathcal{A}\left(
\begin{array}
[c]{c}%
\rho_{0}^{\prime}\\
\rho_{1}^{\prime}\\
\rho_{2}^{\prime}%
\end{array}
\right)  =0,
\]
where
\[
\mathcal{A}=\left(
\begin{array}
[c]{ccc}%
\frac{i\tau_{1}\xi_{0}}{\sqrt{3}} & \frac{i\tau_{1}\xi_{1}}{\sqrt{3}} &
\frac{i\tau_{1}\xi_{2}}{\sqrt{3}}\\
\frac{i\tau_{1}\partial_{x}\xi_{0}}{\sqrt{3}} & \frac{i\tau_{1}\partial_{x}%
\xi_{1}}{\sqrt{3}} & \frac{i\tau_{1}\partial_{x}\xi_{2}}{\sqrt{3}}\\
\frac{i\tau_{1}\partial_{x}^{2}\xi_{0}}{\sqrt{3}} & \frac{i\tau_{1}%
\partial_{x}^{2}\xi_{1}}{\sqrt{3}} & \frac{i\tau_{1}\partial_{x}^{2}\xi_{2}%
}{\sqrt{3}}%
\end{array}
\right)  .
\]
Hence $\left(  \ref{1}\right)  $ has a solution
\[
\int_{0}^{y}\mathcal{A}^{-1}\left(  1,s\right)  \left(
\begin{array}
[c]{c}%
H_{0}\left(  1,s\right) \\
H_{1}\left(  1,s\right) \\
H_{2}\left(  1,s\right)
\end{array}
\right)  ds.
\]
$\allowbreak$This completes the proof.
\end{proof}

\begin{proposition}
\label{p1}Suppose $\eta$ satisfies $\left(  \ref{yita1}\right)  .$ Then%
\begin{align*}
\partial_{x}^{3}\Phi_{0}  &  =\left(  -\frac{\sqrt{3}}{2}+\frac{6}{\tau_{1}%
}\right)  \partial_{x}^{2}\Phi_{0}+\frac{1}{\tau_{1}}\left(  2\sqrt{3}%
-\frac{15}{\tau_{1}}\right)  \partial_{x}\Phi_{0}\\
&  +\frac{1}{\tau_{1}^{2}}\left(  \frac{15}{\tau_{1}}-2\sqrt{3}\right)
\Phi_{0}.
\end{align*}

\end{proposition}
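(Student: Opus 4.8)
The plan is to derive the displayed third‑order ODE by a direct reduction that uses only two facts: the function $\phi$ in question satisfies the scalar ODE $(\ref{s2})$ for each fixed $y$ (it equals $w_{0}+w_{1}$, where $w_{0}$ is the variation‑of‑parameters solution $(\ref{fi})$ and $w_{1}$ is a combination of $\xi_{0},\xi_{1},\xi_{2}$, all of which solve the homogeneous version of $(\ref{s2})$), and $\eta$ satisfies the linearized bilinear equation $(\ref{yita1})$. First I would write $\Phi_{0}=L_{1}\phi-G_{1}\eta$ out explicitly; using $\tau_{1}=x+iy+\sqrt{3}$, for which $\partial_{x}\tau_{1}=1$, $\partial_{y}\tau_{1}=i$ and all higher derivatives vanish, this gives
\[
\Phi_{0}=\tau_{1}\partial_{x}^{2}\phi+\Bigl(\tfrac{\tau_{1}}{\sqrt{3}}-2\Bigr)\partial_{x}\phi+\tfrac{i\tau_{1}}{\sqrt{3}}\partial_{y}\phi+\partial_{x}^{2}\eta-\tfrac{1}{\sqrt{3}}\partial_{x}\eta-\tfrac{i}{\sqrt{3}}\partial_{y}\eta .
\]
Differentiating this three times in $x$ produces an expression in $\partial_{x}^{k}\phi,\partial_{x}^{k}\eta$ ($k\le5$) and $\partial_{x}^{k}\partial_{y}\phi,\partial_{x}^{k}\partial_{y}\eta$ ($k\le3$); the high‑order derivatives are then eliminated by differentiating $(\ref{s2})$ in $x$ (once, twice) and in $y$ (once), and by using the expanded form of $(\ref{yita1})$, namely
\[
\tau_{1}\bigl(\partial_{x}^{4}\eta-\partial_{x}^{2}\eta-\partial_{y}^{2}\eta\bigr)-4\partial_{x}^{3}\eta+2\partial_{x}\eta+2i\partial_{y}\eta=0 ,
\]
together with its $x$‑derivatives. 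After these substitutions, $\partial_{x}^{3}\Phi_{0}$ and the claimed combination of $\Phi_{0},\partial_{x}\Phi_{0},\partial_{x}^{2}\Phi_{0}$ become two expressions, with coefficients rational in $\tau_{1}$, in the remaining unconstrained derivatives of $\phi$ and $\eta$; one then checks they coincide. As elsewhere in the paper, this final check is cleanest in \textit{Maple}.

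Conceptually, the identity is the $O(\varepsilon)$ part of the bilinear operator identity $(\ref{back})$ — with $\lambda=\nu=0$, $\mu=\tfrac1{\sqrt3}$ — evaluated at $f=\tau_{0}+\varepsilon\phi$, $g=\tau_{1}+\varepsilon\eta$. Since $\tau_{0},\tau_{1}$ solve the bilinear KP‑I equation and $\eta$ satisfies $(\ref{yita1})$, the linearization of $(\ref{back})$ collapses to a relation of the shape
\[
\bigl(-\partial_{x}^{2}\phi+\partial_{x}^{4}\phi-\partial_{y}^{2}\phi\bigr)\tau_{1}^{2}=\tau_{1}\partial_{x}\Psi_{0}-\Psi_{0}+3\partial_{x}\Phi_{0}+\sqrt{3}\,i\,\tau_{1}\partial_{y}\Phi_{0}+\sqrt{3}\,\Phi_{0},
\]
with $\Psi_{0}:=M_{1}\phi-N_{1}\eta$; and $(\ref{s2})$, being precisely what one obtains by substituting the first linearized B\"acklund relation into the second, relates $\Psi_{0}$ to $\Phi_{0}$ and $\partial_{x}\Phi_{0}$. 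Feeding that back, and then eliminating $\partial_{y}\Phi_{0}$ and the combination $-\partial_{x}^{2}\phi+\partial_{x}^{4}\phi-\partial_{y}^{2}\phi$ with the help of $(\ref{s2})$, $(\ref{yita1})$ and the explicit form of $\Phi_{0}$, closes everything onto $\Phi_{0}$ and yields the stated ODE.

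The main obstacle is not any single step but the bookkeeping in this cancellation: after all admissible substitutions the two sides still carry the genuinely unconstrained $y$‑derivatives of $\phi$ and $\eta$ (nothing solves for $\partial_{y}\phi$, $\partial_{y}\eta$, $\partial_{y}^{2}\eta$, and so on), so the proposition is exactly the claim that all of these drop out — the linearized compatibility of the double B\"acklund construction. This is too large to verify reliably by hand, which is why symbolic computation is the natural tool here. One final remark: the coefficients $-\tfrac{\sqrt{3}}{2}+\tfrac{6}{\tau_{1}}$, etc.\ are singular only at $(x,y)=(-\sqrt{3},0)$, and clearing denominators by $\tau_{1}^{3}$ makes the identity polynomial in $\tau_{1}$; this singularity does not enter here, only later when the present proposition is combined with $\Phi_{0}=\partial_{x}\Phi_{0}=\partial_{x}^{2}\Phi_{0}=0$ at $x=1$ (Lemma \ref{initial}) to force $\Phi_{0}\equiv0$.
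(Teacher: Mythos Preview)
Your proposal is correct and coincides with the paper's own proof, which is simply to verify the identity by direct computation in \textit{Maple}; the authors add that while they expect the identity to be a manifestation of Lax--pair compatibility, they have not been able to derive it that way. You give a more articulated account of what the direct computation must do --- expand $\Phi_{0}$, differentiate, and reduce using $(\ref{s2})$ and $(\ref{yita1})$ together with their $x$- and $y$-derivatives --- and you correctly flag that $\phi$ must be assumed to satisfy $(\ref{s2})$ (the paper leaves this implicit in Proposition~\ref{p1}, though the parallel Propositions~\ref{third order ode} and~\ref{P5} state the analogous hypothesis on $\phi$ explicitly). Your conceptual paragraph, interpreting the identity as the $O(\varepsilon)$ part of $(\ref{back})$, is exactly the intuition the paper records but does not turn into a proof.
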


\begin{proof}
One can verify this identity by direct computation. Although this tedious
calculation can be done by hand, we suggest to do it using computer softwares
such as \textit{Maple} or \textit{Mathematica}.

Intuitively, we expect that this identity follows from the compatibility
properties of suitable Lax pair of the KP-I equation. But up to now we have
not been able to rigorously show this.
\end{proof}

\begin{lemma}
\label{w0w1}The function $\phi=w_{0}+w_{1}$ satisfies $\left(  \ref{s1}%
\right)  $ for all $\left(  x,y\right)  \in\mathbb{R}^{2}.$ As a consequence,
it satisfies the linearized bilinear KP-I equation at $\tau_{0}:$%
\begin{equation}
-D_{x}^{2}\phi\cdot\tau_{0}+D_{x}^{4}\phi\cdot\tau_{0}-D_{y}^{2}\phi\cdot
\tau_{0}=0. \label{linear}%
\end{equation}

\end{lemma}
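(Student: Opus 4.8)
The goal is to upgrade the ODE-level construction of $\phi = w_0 + w_1$ into a genuine solution of the two-equation system $(\ref{s1})$ on all of $\mathbb{R}^2$, and then to read off $(\ref{linear})$ as a consequence.

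\smallskip

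\textbf{Plan of proof.} The function $\phi = w_0 + w_1$ is, by construction, a solution of the third-order ODE $(\ref{s2})$ in the $x$ variable (for each fixed $y$), and $w_1$ has been chosen via Lemma \ref{initial} so that the auxiliary quantities $\Phi_0, \Phi_1 = \partial_x \Phi_0, \Phi_2 = \partial_x^2 \Phi_0$ all vanish on the line $\{x = 1\}$. Recall $\Phi_0 = L_1\phi - G_1\eta$, so the first equation of $(\ref{s1})$ is precisely $\Phi_0 \equiv 0$. The strategy is to show $\Phi_0 \equiv 0$ everywhere by an ODE uniqueness argument in $x$: Proposition \ref{p1} gives a closed linear third-order ODE in $x$ satisfied by $\Phi_0$ (with coefficients that are smooth away from the singular point $(-\sqrt 3, 0)$), and the three Cauchy data $\Phi_0 = \Phi_1 = \Phi_2 = 0$ hold on $\{x=1\}$; hence $\Phi_0 \equiv 0$ for all $x$ by the uniqueness theorem for linear ODEs — on each horizontal line not through the singular point, and then by continuity on the exceptional line too. (One must check that the solution $\phi$ together with its relevant derivatives extends smoothly across $x = -\sqrt 3$ at $y=0$; this is exactly what the bound $|w_0| \le C(1+r)^{5/2}$ for $x \le 10$ from the earlier lemma, combined with smoothness of $w_1$, is designed to guarantee — the apparent singularity of $1/\tau_1$ is removable because of the matching $g_2 - g_1 = O(\tau_1^3)$ already exploited.)

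\smallskip

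Once $\Phi_0 \equiv 0$, i.e. $L_1 \phi = G_1 \eta$ holds identically, it remains to recover the second equation $M_1\phi = N_1\eta$. Here I would use that $(\ref{s2})$ — which $\phi$ satisfies by construction — was derived precisely by substituting the first equation into the second; so, granted $L_1\phi = G_1\eta$ and $(\ref{s2})$, the quantity $M_1\phi - N_1\eta$ satisfies a first-order linear ODE in $x$ (in fact it is forced to be proportional to $1/\tau_1$ or to vanish, coming from the reduction). Its vanishing on $\{x=1\}$ — which follows from the initial conditions $\rho_i(0)=0$ in Lemma \ref{initial} propagated along $y$, or can be arranged as part of that construction — then forces $M_1\phi - N_1\eta \equiv 0$ by ODE uniqueness again. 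Thus $\phi$ solves the full linearized B\"acklund system $(\ref{s1})$.

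\smallskip

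Finally, $(\ref{linear})$ is obtained algebraically. The bilinear B\"acklund identity $(\ref{back})$ (with the parameter choice $\mu = \tfrac{1}{\sqrt 3}$, $\lambda,\nu$ as in $(\ref{b1})$), differentiated/linearized at the pair $(\tau_0,\tau_1)$, expresses the linearized bilinear KP-I operator applied to $\phi \cdot \tau_0$ in terms of $D_x$- and $D_y$-derivatives of $L_1\phi - G_1\eta$ and $M_1\phi - N_1\eta$ paired against $\tau_0\tau_1$, $D_x\tau_1\cdot\tau_0$, etc.; since both of those vanish identically, the right-hand side is zero, giving $(-D_x^2 + D_x^4 - D_y^2)\phi\cdot\tau_0 = 0$, which is exactly $(\ref{linear})$. (Alternatively one differentiates the scalar version: $\phi$ corresponds through the linearization to a kernel element of the linear operator $\partial_x^2 + \partial_y^2 - \partial_x^4$ on the trivial background, and $(\ref{linear})$ is its bilinear reformulation.)

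\smallskip

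\textbf{Main obstacle.} The delicate point is the passage from ``ODE solution for each fixed $y$ with good Cauchy data on $\{x=1\}$'' to ``genuine smooth solution on all of $\mathbb{R}^2$'', because the coefficient $1/\tau_1$ in $(\ref{s2})$ and in the Proposition \ref{p1} ODE blows up at $(-\sqrt 3, 0)$. One has to verify that $\phi$ and its $x$-derivatives up to order three extend continuously (indeed smoothly) across that point, so that the ODE-uniqueness propagation of $\Phi_0 \equiv 0$ is valid on the whole line $y=0$ and not just on $\{x > -\sqrt3\}$ and $\{x<-\sqrt3\}$ separately; this is where the carefully arranged growth bound $|w_0|\le C(1+r)^{5/2}$ for $x\le 10$ and the cancellation $g_2 - g_1 = O(\tau_1^3)$ do the work. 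A secondary bookkeeping issue is ensuring that the second equation's residual $M_1\phi - N_1\eta$ really does vanish on the initial line — tracking that through the definitions of $w_0$, $w_1$ and the choices in Lemma \ref{initial}.
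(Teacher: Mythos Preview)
Your overall strategy matches the paper's: use Proposition \ref{p1} to reduce $\Phi_0\equiv 0$ to an ODE uniqueness statement with zero Cauchy data on $\{x=1\}$, then deduce $(\ref{linear})$ by linearizing the bilinear identity $(\ref{back})$. Two points deserve correction.

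First, your handling of the second equation $M_1\phi=N_1\eta$ is muddled. Once $\Phi_0\equiv 0$ (i.e.\ $L_1\phi=G_1\eta$ holds identically), the formula $(\ref{dy})$ for $\partial_y\phi$ is valid, and so is its $x$-derivative for $\partial_x\partial_y\phi$. Substituting both into $M_1\phi-N_1\eta$ is precisely the manipulation that produced $(\ref{s2})$; hence $M_1\phi-N_1\eta$ is \emph{algebraically equal} to the left side minus the right side of $(\ref{s2})$, which vanishes because $w_0$ solves the inhomogeneous ODE and each $\xi_j$ solves the homogeneous one. There is no ``first-order linear ODE for the residual'' and no need for extra initial data; the second equation follows immediately. (Your remark that ``vanishing on $\{x=1\}$ follows from $\rho_i(0)=0$'' conflates the $y$-initial condition in Lemma \ref{initial} with an $x$-condition.) This is exactly what the paper means by ``by the definition of $w_0$ and $w_1$, the second equation of $(\ref{s1})$ is also satisfied''.

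Second, the paper handles the singular point $(-\sqrt 3,0)$ more simply than you propose: for each fixed $y\neq 0$ the coefficients in the ODE of Proposition \ref{p1} are smooth in $x$, so uniqueness gives $\Phi_0(x,y)=0$ for all $x$; then continuity of $\phi$ in $(x,y)$ extends the conclusion to the line $y=0$. No separate smoothness argument across the singular point is needed.
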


\begin{proof}
By the definition of $\Phi_{0},\Phi_{1},\Phi_{2}$ and Proposition \ref{p1}, we
have%
\[
\partial_{x}\left(
\begin{array}
[c]{c}%
\Phi_{0}\\
\Phi_{1}\\
\Phi_{2}%
\end{array}
\right)  =\left(
\begin{array}
[c]{ccc}%
0 & 1 & 0\\
0 & 0 & 1\\
\frac{\frac{15}{\tau_{1}}-2\sqrt{3}}{\tau_{1}^{2}} & \frac{2\sqrt{3}-\frac
{15}{\tau_{1}}}{\tau_{1}} & \frac{6}{\tau_{1}}-\frac{\sqrt{3}}{2}%
\end{array}
\right)  \left(
\begin{array}
[c]{c}%
\Phi_{0}\\
\Phi_{1}\\
\Phi_{2}%
\end{array}
\right)  .
\]
For each fixed $y\neq0,$ by Lemma \ref{initial}, $\Phi_{0}\left(  1,y\right)
=\Phi_{1}\left(  1,y\right)  =\Phi_{2}\left(  1,y\right)  =0.$ By the
uniqueness of solution to ODE, we obtain $\Phi_{0}\left(  x,y\right)
=\Phi_{1}\left(  x,y\right)  =\Phi_{2}\left(  x,y\right)  =0,$ for all
$x\in\mathbb{R}.$ On the other hand, by the definition of $w_{0}$ and $w_{1},$
the second equation of $\left(  \ref{s1}\right)  $ is also satisfied for all
$\left(  x,y\right)  \in\mathbb{R}^{2}$ with $y\neq0.$ By continuity of
$\phi,$ we know that $\phi=w_{0}+w_{1}$ satisfies $\left(  \ref{s1}\right)  $
for all $\left(  x,y\right)  \in\mathbb{R}^{2}.$ The equation $\left(
\ref{linear}\right)  $ then follows from the linearization of the bilinear
identity $\left(  \ref{back}\right)  .$
\end{proof}

\begin{lemma}
\label{ro}Suppose $\eta$ satisfies $\left(  \ref{es1}\right)  $. Let $\rho
_{i},i=0,1,2,$ be functions given by Lemma \ref{initial}. Then%
\[
\rho_{1}\left(  y\right)  =\rho_{2}\left(  y\right)  =0,\text{ for all }%
y\in\mathbb{R}.
\]

\end{lemma}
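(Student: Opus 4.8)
The plan is to exploit the growth control coming from $\left(\ref{es1}\right)$ on $\eta$, which by the construction in Lemma~\ref{w0w1} forces the reconstructed solution $\phi = w_0 + w_1$ to grow no faster than $\left(1+r\right)^{5/2}$ in the region $x \le 10$. Recall that $w_1 = \rho_0 \xi_0 + \rho_1 \xi_1 + \rho_2 \xi_2$, and among the three basis solutions of the homogeneous system, $\xi_0 \equiv 1$ is bounded, $\xi_1 = \tfrac12\tau_1^2 - \tfrac{\sqrt3}{6}\tau_1^3$ is polynomial in $\tau_1$ (hence grows like $r^3$), while $\xi_2 = \left(\tfrac{\sqrt3}{2}\tau_1 + 1\right)e^{-\frac{\sqrt3}{2}x + \frac{\sqrt3}{4}yi}$ is \emph{exponentially large} as $x \to -\infty$. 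Since $w_0$ is already bounded by $C\left(1+r\right)^{5/2}$ for $x \le 10$, the only way for $\phi$ to respect the same bound is for the coefficient functions to kill the fast-growing pieces.

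The key steps, in order, are as follows. First I would fix $y$ and let $x \to -\infty$. The term $\rho_2(y)\,\xi_2$ then behaves like $\rho_2(y)$ times something of size $e^{\frac{\sqrt3}{2}|x|}|x|$, which dominates every other term in $\phi$ and in $w_0$ (the latter being only polynomially bounded there). Matching against the polynomial bound on $\phi$ forces $\rho_2(y) = 0$ for every $y$. Second, with $\rho_2 \equiv 0$, I would look at the next-fastest term: $\rho_1(y)\,\xi_1$ grows like $|\rho_1(y)|\,r^3$, which exceeds the allowed $\left(1+r\right)^{5/2}$ unless $\rho_1(y) = 0$. More carefully, one should extract the genuine leading coefficient: as $|x|\to\infty$ with $y$ fixed, $\xi_1 \sim -\tfrac{\sqrt3}{6}(x+iy+\sqrt3)^3$, and since $w_0$ and the remaining terms are $o(r^3)$, the coefficient of $x^3$ in $\phi$ must vanish, giving $\rho_1(y)=0$. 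This is the substance of the statement.

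The main obstacle I expect is making the second step rigorous: one must verify that $w_0$ really is $o\!\left(r^{3}\right)$ (indeed it is $O\!\left((1+r)^{5/2}\right)$ by the preceding lemma, but only in the half-space $x \le 10$, which is exactly where we are taking the limit $x\to-\infty$, so this is fine), and one must be sure no hidden cancellation between $\rho_1\xi_1$ and lower-order contributions could occur. A clean way to organize this is to expand $\phi(x,y)$ for fixed $y$ as a polynomial-plus-error in $x$ as $x\to-\infty$: the $x^3$, $x^2$, $\dots$ coefficients are explicit affine functions of $\rho_0(y),\rho_1(y),\rho_2(y)$ (after $\rho_2$ is eliminated), and the $\left(\ref{es1}\right)$-type bound on $\phi$ forces each coefficient of a power $x^k$ with $k > 5/2$, i.e. $k \ge 3$, to be zero. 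Reading off the $x^3$ coefficient gives $\rho_1 \equiv 0$ directly. Since $\xi_2$ was already handled by the exponential comparison, this completes the proof that $\rho_1 \equiv \rho_2 \equiv 0$.

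\begin{proof}
Since $\eta$ satisfies $\left(\ref{es1}\right)$, by the construction in Lemma~\ref{w0w1} the function $\phi = w_0 + w_1$ solves $\left(\ref{s1}\right)$, where $w_1 = \rho_0\xi_0 + \rho_1\xi_1 + \rho_2\xi_2$. Fix $y \in \mathbb{R}$ and consider the behavior as $x \to -\infty$. In this regime $\left\vert \xi_2(x,y)\right\vert = \left\vert \tfrac{\sqrt3}{2}\tau_1 + 1\right\vert e^{-\frac{\sqrt3}{2}x} \geq c\,(1+|x|)\,e^{\frac{\sqrt3}{2}|x|}$, while $\left\vert w_0(x,y)\right\vert \leq C(1+r)^{5/2}$ (by the bound valid for $x \le 10$) and $\left\vert \xi_0\right\vert + \left\vert \xi_1(x,y)\right\vert \leq C(1+|x|)^3$. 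Hence if $\rho_2(y) \neq 0$ then $\left\vert \phi(x,y)\right\vert \to \infty$ exponentially as $x \to -\infty$. But from $\left(\ref{dy}\right)$ and the estimate on $\eta$, together with $\left(\ref{s2}\right)$, one checks that $\phi$ inherits a polynomial bound $\left\vert \phi(x,y)\right\vert \leq C_y(1+|x|)^{3}$ for $x \le 10$; this contradiction forces $\rho_2(y) = 0$ for every $y \in \mathbb{R}$.

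With $\rho_2 \equiv 0$ we have $\phi = w_0 + \rho_0\xi_0 + \rho_1\xi_1$. As $x \to -\infty$ with $y$ fixed, $\xi_1(x,y) = -\tfrac{\sqrt3}{6}(x+iy+\sqrt3)^3 + \tfrac12(x+iy+\sqrt3)^2 = -\tfrac{\sqrt3}{6}x^3 + O(x^2)$, while $\xi_0 \equiv 1$ and $w_0(x,y) = O\!\left((1+|x|)^{5/2}\right)$. Therefore the coefficient of $x^3$ in the asymptotic expansion of $\phi(x,y)$ equals $-\tfrac{\sqrt3}{6}\rho_1(y)$. Since $\left\vert \phi(x,y)\right\vert \leq C_y(1+|x|)^{5/2}$ for $x \le 10$, this coefficient must vanish, so $\rho_1(y) = 0$ for all $y \in \mathbb{R}$. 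This proves $\rho_1 \equiv \rho_2 \equiv 0$.
\end{proof}
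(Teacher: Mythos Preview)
Your argument has a genuine gap: it is circular. You assert that ``from $(\ref{dy})$ and the estimate on $\eta$, together with $(\ref{s2})$, one checks that $\phi$ inherits a polynomial bound $\left\vert \phi(x,y)\right\vert \leq C_y(1+|x|)^{3}$ for $x \le 10$,'' and then you use this to rule out the exponentially growing $\rho_2\xi_2$ and the cubically growing $\rho_1\xi_1$. But no such bound on $\phi$ is available at this stage. Equation $(\ref{s2})$ is a third-order linear ODE in $x$ whose homogeneous solutions include $\partial_x^{-1}g_2$, which behaves like $e^{-\frac{\sqrt3}{2}x}$ and hence blows up exponentially as $x\to-\infty$; equation $(\ref{dy})$ merely expresses $\partial_y\phi$ in terms of $x$-derivatives of $\phi$ and gives no growth control. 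The polynomial bound on $\phi$ is precisely the \emph{conclusion} of Proposition~\ref{P1}, which is proved only \emph{after} Lemma~\ref{ro}. So you are assuming what remains to be shown.

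The paper's proof avoids this trap by not appealing to any bound on $\phi$ at all. It exploits instead the identity $\Phi_0=L_1\phi-G_1\eta=0$, which by Lemma~\ref{w0w1} holds for every $(x,y)$. Since $L_1\xi_i=0$, one has $L_1w_1=\frac{i}{\sqrt3}\tau_1\bigl(\rho_0'\xi_0+\rho_1'\xi_1+\rho_2'\xi_2\bigr)$, so $\Phi_0=0$ reads
\[
\frac{i}{\sqrt3}\tau_1\bigl(\rho_0'\xi_0+\rho_1'\xi_1+\rho_2'\xi_2\bigr)=H_0=G_1\eta-L_1w_0.
\]
The right-hand side \emph{does} have a polynomial bound as $x\to-\infty$, coming from $(\ref{es1})$ and the established estimate on $w_0$. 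Dividing by $\xi_2$ and letting $x\to-\infty$ forces $\rho_2'=0$; then dividing by $\xi_1$ forces $\rho_1'=0$; the initial conditions $\rho_i(0)=0$ finish. The point is that the identity constrains the \emph{derivatives} $\rho_i'$ through quantities that are already controlled, whereas your argument tries to constrain $\rho_i$ through a bound on $\phi$ that you do not have.
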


\begin{proof}
Dividing the equation $\Phi_{0}=0$ by $\xi_{2},$ we get
\begin{equation}
\frac{i}{\sqrt{3}\xi_{2}}\left(  \rho_{0}^{\prime}\xi_{0}+\rho_{1}^{\prime}%
\xi_{1}+\rho_{2}^{\prime}\xi_{2}\right)  \tau_{1}=\frac{1}{\xi_{2}}H_{0}.
\label{li}%
\end{equation}
For each fixed $y\in\mathbb{R},$ sending $x\rightarrow-\infty$ in $\left(
\ref{li}\right)  $ and using the estimate $\left(  \ref{es1}\right)  ,$ we
infer%
\[
\rho_{2}^{\prime}=0.
\]
This together with the initial condition $\rho_{2}\left(  0\right)  =0$ tell
us that $\rho_{2}=0.$

Now $\Phi_{0}=0$ becomes
\[
\frac{i}{\sqrt{3}}\left(  \rho_{0}^{\prime}\xi_{0}+\rho_{1}^{\prime}\xi
_{1}\right)  \tau_{1}=H_{0}.
\]
Dividing both sides of this equation by $\xi_{1}$ and letting $x\rightarrow
-\infty,$ we get $\rho_{1}^{\prime}=0.$ Hence $\rho_{1}=0.$ The proof is completed.
\end{proof}

\begin{proof}
[Proof of Proposition \ref{P1}]We have proved that $\rho_{1}$ and $\rho_{2}$
are both zero. Now let us define
\[
k\left(  y\right)  =\int_{-\infty}^{+\infty}\frac{g_{2}F_{1}}{4\tau_{1}%
W}ds,\text{ for }y\neq0.
\]
For each fixed $y,$ note that due to the definition of $w_{0},$ as
$x\rightarrow+\infty,$ the main order term of $w_{0}$ is $-k\left(  y\right)
\xi_{1}\left(  x,y\right)  .$ Write
\[
w_{0}+k\left(  y\right)  \xi_{1}\left(  x,y\right)  :=w^{\ast}.
\]
Then
\begin{equation}
\left\vert w^{\ast}\right\vert \leq C\left(  1+r\right)  ^{\frac{5}{2}},\text{
for }x\text{ large.} \label{star}%
\end{equation}

Now using similar arguments as that of Lemma \ref{ro} and the estimate
$\left(  \ref{star}\right)  ,$ we see that $k^{\prime}\left(  y\right)  =0$.
But $k\left(  y\right)  \rightarrow0$ as $\left\vert y\right\vert
\rightarrow+\infty.$ Hence $k\left(  y\right)  =0$. This together with
(\ref{star}) implies that
\[
\left\vert w_{0}\right\vert \leq C\left(  1+r\right)  ^{\frac{5}{2}}.
\]
Since $\rho_{1},\rho_{2}$ are both identically zero, we then infer from Lemma
\ref{w0w1} that the function $\phi=w_{0}+\rho_{0}\xi_{0}$ is the desired
solution. Note that the equation $\left(  \ref{constant}\right)  $ follows
from the linearization of the bilinear identity $\left(  \ref{back}\right)  .$
\end{proof}

\subsection{ Linearized B\"{a}cklund transformation between $\tau_{1}$ and
$\tau_{2}\label{Sec4}$}

In terms of $\tau_{1}$ and $\tau_{2},$ the B\"acklund transformation $\left(
\ref{b2}\right)  $ can be written as
\[
\left\{
\begin{array}
[c]{l}%
\left(  D_{x}^{2}-\frac{1}{\sqrt{3}}D_{x}+\frac{1}{\sqrt{3}}iD_{y}\right)
\tau_{1}\cdot\tau_{2}=0,\\
\left(  -D_{x}+iD_{y}+D_{x}^{3}-\sqrt{3}iD_{x}D_{y}\right)  \tau_{1}\cdot
\tau_{2}=0.
\end{array}
\right.
\]
The linearization of this system is%
\begin{equation}
\left\{
\begin{array}
[c]{c}%
L_{2}\phi=G_{2}\eta,\\
M_{2}\phi=N_{2}\eta.
\end{array}
\right.  \label{s3}%
\end{equation}
Here
\begin{align*}
L_{2}\phi &  =\left(  D_{x}^{2}-\frac{1}{\sqrt{3}}D_{x}+\frac{1}{\sqrt{3}%
}iD_{y}\right)  \phi\cdot\tau_{2},\\
M_{2}\phi &  =\left(  -D_{x}+iD_{y}+D_{x}^{3}-\sqrt{3}iD_{x}D_{y}\right)
\phi\cdot\tau_{2},
\end{align*}
and
\begin{align*}
G_{2}\eta &  =-\left(  D_{x}^{2}-\frac{1}{\sqrt{3}}D_{x}+\frac{1}{\sqrt{3}%
}iD_{y}\right)  \tau_{1}\cdot\eta,\\
N_{2}\eta &  =-\left(  -D_{x}+iD_{y}+D_{x}^{3}-\sqrt{3}iD_{x}D_{y}\right)
\tau_{1}\cdot\eta.
\end{align*}

\begin{proposition}
\label{P}Let $\eta=\eta\left(  x,y\right)  $ be a function solving the
linearized bilinear KP-I equation at $\tau_{2}:$%
\begin{equation}
-D_{x}^{2}\eta\cdot\tau_{2}+D_{x}^{4}\eta\cdot\tau_{2}=D_{y}^{2}\eta\cdot
\tau_{2}. \label{yita2}%
\end{equation}
Suppose $\eta$ satisfies $\left(  \ref{es1}\right)  .$ Then the system
$\left(  \ref{s1}\right)  $ has a solution $\phi$ with
\[
\left\vert \phi\right\vert +\left(  \left\vert \partial_{x}\phi\right\vert
+\left\vert \partial_{x}^{2}\phi\right\vert +\left\vert \partial_{x}%
\partial_{y}\phi\right\vert \right)  \left(  1+r\right)  \leq C\left(
1+r\right)  ^{\frac{5}{2}}.
\]
Moreover, $\phi$ satisfies
\[
-D_{x}^{2}\phi\cdot\tau_{1}+D_{x}^{4}\phi\cdot\tau_{1}=D_{y}^{2}\phi\cdot
\tau_{1}.
\]

\end{proposition}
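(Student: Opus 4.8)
The plan is to mirror, almost line by line, the strategy already carried out in Subsection \ref{Sec3} for the B\"acklund transformation between $\tau_0$ and $\tau_1$. First I would eliminate $\partial_y\phi$ from the system \eqref{s3}: solving the first equation $L_2\phi=G_2\eta$ for $\partial_y\phi\,\tau_2$ and substituting into the second equation $M_2\phi=N_2\eta$ produces a third-order ODE in $x$ (for each fixed $y$) of the form
\begin{equation*}
4\partial_x^3\phi\,\tau_2+a(x,y)\partial_x^2\phi+b(x,y)\partial_x\phi=F_2,
\end{equation*}
where $F_2$ is an explicit expression involving $\eta$, $\partial_x^3\eta$, $\partial_x\partial_y\eta$ and $\tau_1/\tau_2$; the coefficients $a,b$ are rational in $x,y$ with the only singularities coming from the zero $\tau_2=x^2+y^2+3$, which in fact never vanishes on $\mathbb{R}^2$, so here (unlike in the $\tau_0\to\tau_1$ case) there is no real singular point to worry about. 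I would then solve the associated homogeneous third-order ODE: its space of solutions is spanned (after one integration, as in Lemma \ref{ki}) by functions one can write down explicitly using \textit{Maple}, analogous to $\xi_0,\xi_1,\xi_2$; two of them grow polynomially and correspond to the obvious linearized symmetries, the third is a genuinely new branch. A particular solution $w_0$ is produced by the variation-of-parameters / double-integration formula \eqref{fi}, and the growth bound on $w_0$ follows from plugging the hypothesis \eqref{es1} into the Wronskian estimates exactly as in the lemma following \eqref{fi}.

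Next I would fix the remaining freedom by imposing that the ``error" $\Phi_0:=L_2\phi-G_2\eta$ and its first two $x$-derivatives vanish on the line $x=1$ (or any convenient line). Writing $\phi=w_0+\rho_0(y)\xi_0+\rho_1(y)\xi_1+\rho_2(y)\xi_2$, this is a first-order linear ODE system in $y$ for $(\rho_0,\rho_1,\rho_2)$ whose coefficient matrix is the Wronskian-type matrix $\mathcal A$ built from $\xi_0,\xi_1,\xi_2$; solvability with $\rho_i(0)=0$ is immediate, exactly as in Lemma \ref{initial}. The crucial structural input — the analogue of Proposition \ref{p1} — is that when $\eta$ solves the linearized bilinear equation \eqref{yita2} at $\tau_2$, the vector $(\Phi_0,\partial_x\Phi_0,\partial_x^2\Phi_0)$ satisfies a closed first-order linear ODE system in $x$ with no source term. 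Granting this, $\Phi_0\equiv0$ on $\{x=1\}$ forces $\Phi_0\equiv0$ everywhere by ODE uniqueness, hence the first equation of \eqref{s3} holds identically; the second equation then holds for $y\neq0$ by construction and for $y=0$ by continuity, and the conclusion $-D_x^2\phi\cdot\tau_1+D_x^4\phi\cdot\tau_1=D_y^2\phi\cdot\tau_1$ drops out of the linearization of the bilinear identity \eqref{back}. Finally, to upgrade the one-sided growth bound on $w_0$ to the two-sided estimate in the statement, I would run the argument of Lemma \ref{ro} and the proof of Proposition \ref{P1}: divide $\Phi_0=0$ successively by $\xi_2$ and by $\xi_1$, let $x\to\pm\infty$, and use \eqref{es1} to kill $\rho_2'$ and $\rho_1'$, hence $\rho_1\equiv\rho_2\equiv0$; an analogous ``coefficient at infinity" argument removes the fastest-growing branch of $w_0$ itself.

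The main obstacle is the analogue of Proposition \ref{p1}: verifying that the triple $(\Phi_0,\partial_x\Phi_0,\partial_x^2\Phi_0)$ closes into a sourceless linear ODE system, i.e. that $\partial_x^3\Phi_0$ is an explicit rational-coefficient combination of $\Phi_0,\partial_x\Phi_0,\partial_x^2\Phi_0$ \emph{precisely when $\eta$ solves \eqref{yita2}}. This is the step where the hidden integrability of KP-I is doing the work, and — as the authors themselves note after Proposition \ref{p1} — we do not have a conceptual derivation from a Lax-pair compatibility; it has to be checked by a substantial symbolic computation in \textit{Maple} or \textit{Mathematica}. A secondary technical point is bookkeeping the factor $\tau_1/\tau_2$ appearing in $F_2$ and in the ODE coefficients: unlike $1/\tau_1$ in Subsection \ref{Sec3}, this ratio is smooth and decays, which actually simplifies the growth estimates, but one must track it carefully to get the sharp exponent $\tfrac52$ and the extra $(1+r)$ gain on the derivatives claimed in the statement.
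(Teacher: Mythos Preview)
Your high-level strategy is exactly the paper's, and you correctly identify the analogue of Proposition~\ref{p1} (which the paper states as Proposition~\ref{P4}) as the crux. However, there is a genuine error in your reduction step that would derail the ODE analysis.

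When you eliminate $\partial_y\phi$ from \eqref{s3}, the resulting third-order ODE is \emph{not} of the form $4\tau_2\partial_x^3\phi+a\partial_x^2\phi+b\partial_x\phi=F_2$. A zeroth-order term survives: the paper's equation \eqref{Eq1} reads
\[
\partial_x^3\phi\,\tau_2+\Bigl(-\tfrac{\sqrt3}{2}\tau_2-6x\Bigr)\partial_x^2\phi+\Bigl(2\sqrt3 x+\tfrac{12x^2}{\tau_2}\Bigr)\partial_x\phi-\frac{2\sqrt3 x}{\tau_2}\bar\tau_1\,\phi=\frac{F_2}{4}.
\]
Consequently the ``one integration, as in Lemma~\ref{ki}'' trick of setting $g=\phi'$ does not reduce the order; the constant function $1$ is \emph{not} a homogeneous solution here. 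What the paper does instead is observe that $\zeta_0=\tau_1$ solves the homogeneous system (Lemma~\ref{Tau2}), substitute $\phi=\tau_1\kappa$, $h=\kappa'$, and only then obtain a second-order ODE $T(h)=F_2/4$ whose fundamental system $h_1,h_2$ can be written down. The three homogeneous solutions for the full system are $\zeta_0=\tau_1$, $\zeta_1=\tau_1\partial_x^{-1}h_1$, $\zeta_2=e^{\sqrt3 yi}\tau_1\partial_x^{-1}h_2$, and the ansatz is $\phi=\tilde w_0+\beta_0\zeta_0+\beta_1\zeta_1+\beta_2\zeta_2$.

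This correction also affects your remark about singularities: after the substitution, the second-order ODE $T(h)=0$ carries a factor $\tau_1$ in its leading coefficient, and $h_1,h_2,\tilde W$ all have powers of $\tau_1$ in their denominators, so the point $(-\sqrt3,0)$ is again singular at the level of $h$. The paper handles this by integrating from $+\infty$ (not $-\infty$) in the variation-of-parameters formula for $h^*$, establishing the bound on $\tilde w_0$ first for $x\geq -10$, and then using the asymptotics of $\zeta_1,\zeta_2$ as $x\to -\infty$ (not $+\infty$) in the analogue of Lemma~\ref{ro}. With these corrections your outline matches the paper's proof.
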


From the first equation in $\left(  \ref{s3}\right)  $, we get
\begin{equation}
\partial_{y}\phi\tau_{2}=i\left[  \sqrt{3}\left(  \partial_{x}^{2}\phi\tau
_{2}-2\partial_{x}\phi\partial_{x}\tau_{2}\right)  -\partial_{x}\phi\tau
_{2}\right]  +2i\phi\bar{\tau}_{1}-\sqrt{3}iG\eta. \label{dfy}%
\end{equation}
Here we have used $\bar{\tau}_{1}$ to denote the complex conjugate of
$\tau_{1}.$ Inserting $\left(  \ref{dfy}\right)  $ into the second equation of
$\left(  \ref{s3}\right)  $, we obtain%

\begin{equation}
\partial_{x}^{3}\phi\tau_{2}+\left(  -\frac{\sqrt{3}}{2}\tau_{2}-6x\right)
\partial_{x}^{2}\phi+\left(  2\sqrt{3}x+\frac{12x^{2}}{\tau_{2}}\right)
\partial_{x}\phi-\frac{2\sqrt{3}x}{\tau_{2}}\bar{\tau}_{1}\phi=\frac{F_{2}}%
{4}. \label{Eq1}%
\end{equation}
Here
\[
F_{2}=N_{2}\eta-\left(  \frac{6\partial_{x}\tau_{2}}{\tau_{2}}G_{2}%
\eta-3\partial_{x}\left(  G_{2}\eta\right)  +\sqrt{3}G_{2}\eta\right)  .
\]

To solve equation $\left(  \ref{Eq1}\right)  ,$ we set%
\[
\phi=\tau_{1}\kappa\text{ and }h=\kappa^{\prime}.
\]
Equation $\left(  \ref{Eq1}\right)  $ is transformed into the equation
\begin{align*}
T\left(  h\right)   &  :=\tau_{1}\tau_{2}h^{\prime\prime}+\left[  3\tau
_{2}+\left(  -\frac{\sqrt{3}}{2}\tau_{2}-6x\right)  \tau_{1}\right]
h^{\prime}\\
&  +\left[  2\left(  -\frac{\sqrt{3}}{2}\tau_{2}-6x\right)  +\tau_{1}\left(
2\sqrt{3}x+\frac{12x^{2}}{\tau_{2}}\right)  \right]  h\\
&  =\frac{F_{2}}{4}.
\end{align*}

\begin{lemma}
The homogenous equation
\[
T\left(  h\right)  =0
\]
has two solutions $h_{1},h_{2},$ given by \qquad%
\begin{align*}
h_{1}\left(  x,y\right)   &  =\left(  x-yi\right)  ^{2}+\frac{2}{\sqrt{3}%
}\left(  x-yi\right)  +3+\frac{12\left(  y-\sqrt{3}i\right)  \left(
y+\frac{i}{\sqrt{3}}\right)  }{\tau_{1}^{2}}\\
&  =\frac{\tau_{2}}{3\tau_{1}^{2}}\left(  3\tau_{2}+4\sqrt{3}\left(
x+\bar{\tau}_{1}\right)  \right)  ,
\end{align*}
and
\[
h_{2}\left(  x,y\right)  =\frac{\tau_{2}}{\tau_{1}^{2}}\left(  x+yi-\sqrt
{3}\right)  e^{\frac{\sqrt{3}}{2}x}.
\]

\end{lemma}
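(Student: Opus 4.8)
The plan is to verify the claimed formulas for $h_1$ and $h_2$ by direct substitution into the homogeneous equation $T(h)=0$, but first I would explain the structure that makes these the \emph{right} ansatz functions. The key observation is that $T$ is (up to multiplication by $\tau_1$) the conjugated operator obtained from the third-order linearized B\"acklund ODE $\left(\ref{Eq1}\right)$ after the substitution $\phi=\tau_1\kappa$, $h=\kappa'$; hence solutions of $T(h)=0$ correspond to kernel elements of the linearized B\"acklund system $\left(\ref{s3}\right)$ at $(\tau_1,\tau_2)$ coming from symmetries of the lump. The translation symmetries $\partial_x\tau_2$ and $\partial_y\tau_2$ of the $\tau_2$-equation should, under the B\"acklund transformation, be carried to functions $\phi$ satisfying $-D_x^2\phi\cdot\tau_1+D_x^4\phi\cdot\tau_1=D_y^2\phi\cdot\tau_1$, and these are exactly $h_1$ (from the real translations, giving the rational solution) and $h_2$ (the exponential branch, analogous to $g_2=\tau_1 e^{-\frac{\sqrt3}{2}\tau_1}$ in Lemma \ref{h1}). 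So the way to \emph{find} $h_1,h_2$ is to look for solutions of $T(h)=0$ of the form $\frac{\tau_2}{\tau_1^2}P(x,y)$ with $P$ either a quadratic polynomial or a linear polynomial times $e^{\frac{\sqrt3}{2}x}$; plugging such an ansatz in reduces $T(h)=0$ to a small linear system for the coefficients of $P$, which is then solved explicitly.

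Concretely, the steps I would carry out are: (1) record the explicit expressions $\tau_1=x+iy+\sqrt3$, $\tau_2=x^2+y^2+3$, $\partial_x\tau_2=2x$, and note $x+\bar\tau_1 = 2x + iy + \sqrt 3$ wait --- rather $\bar\tau_1 = x - iy+\sqrt 3$ so $x+\bar\tau_1 = 2x - iy + \sqrt3$; compute the coefficients of $T$ in $\left(\ref{Eq1}\right)$ as rational functions of $x,y$ with denominator a power of $\tau_2$; (2) substitute $h=h_1$, clear denominators (multiply through by $\tau_1^2\tau_2$ or an appropriate power), and check that the resulting polynomial identity in $x,y$ holds --- this is a finite algebraic check that I would do with \textit{Maple}, consistent with the paper's stated practice; (3) do the same for $h_2$, where after factoring out $e^{\frac{\sqrt3}{2}x}$ one is again left with a polynomial identity in $x,y$ (the exponential weight is chosen precisely so that the first- and second-derivative terms in $h^{\prime}$, $h^{\prime\prime}$ produce the needed algebraic cancellation); (4) verify the two displayed alternative forms of $h_1$ agree, i.e. that
\[
(x-yi)^2+\tfrac{2}{\sqrt3}(x-yi)+3+\frac{12(y-\sqrt3 i)(y+\tfrac{i}{\sqrt3})}{\tau_1^2}
=\frac{\tau_2}{3\tau_1^2}\bigl(3\tau_2+4\sqrt3(x+\bar\tau_1)\bigr),
\]
again a routine polynomial identity after multiplying both sides by $\tau_1^2$; (5) confirm $h_1$ and $h_2$ are linearly independent, e.g. by noting one is rational while the other carries a genuine exponential factor $e^{\frac{\sqrt3}{2}x}$, so no nontrivial constant-in-$x$ combination vanishes.

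The main obstacle is not conceptual but bookkeeping: the operator $T$ has coefficients that are rational in $\tau_2$ (because of the $\frac{12x^2}{\tau_2}$ and $\frac{2\sqrt3 x}{\tau_2}\bar\tau_1$ terms inherited from $\left(\ref{Eq1}\right)$), and $h_1,h_2$ themselves carry a $\tau_1^2$ in the denominator, so the verification involves differentiating $\tau_2/\tau_1^2$ twice and combining several terms with denominators $\tau_1^3$, $\tau_1^4$, $\tau_2$; one must be careful that all singular contributions at the zero of $\tau_1$ (the point $(-\sqrt3,0)$, which is complex in $x$ if $y$ is real, so actually only $\tau_2$'s zeros at infinity matter on $\mathbb R^2$) cancel. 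I expect that after clearing denominators the identities collapse to low-degree polynomial identities in $x$ and $y$ that \textit{Maple} confirms instantly; the human-checkable content is the derivation of the ansatz and the reduction to those polynomial identities, exactly as in Lemma \ref{h1}. Thus the proof is: exhibit the ansatz, substitute, clear denominators, and verify the resulting polynomial identities (and the equivalence of the two forms of $h_1$), which completes the lemma.
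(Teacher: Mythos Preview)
Your proposal is correct and takes essentially the same approach as the paper: the paper's proof is the single sentence ``This equation can be solved using \textit{Maple},'' and your plan amounts to the same computer-algebra verification, with some added (and helpful) motivation for the ansatz. One small slip: $\tau_1=x+iy+\sqrt3$ does vanish at the real point $(-\sqrt3,0)$, not at a complex point as you suggest in passing, but this does not affect the formal polynomial-identity check you describe.
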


\begin{proof}
This equation can be solved using \textit{Maple.}
\end{proof}

\begin{lemma}
\label{Tau2}The system%
\[
\left\{
\begin{array}
[c]{l}%
\left(  D_{x}^{2}-\frac{1}{\sqrt{3}}D_{x}+\frac{1}{\sqrt{3}}iD_{y}\right)
\phi\cdot\tau_{2}=0,\\
\left(  -D_{x}+iD_{y}+D_{x}^{3}-\sqrt{3}iD_{x}D_{y}\right)  \phi\cdot\tau
_{2}=0
\end{array}
\right.
\]
has three solutions $\zeta_{0},\zeta_{1},\zeta_{2},$ given by $\zeta_{0}%
=\tau_{1},$%
\begin{align*}
\zeta_{1}  &  =\tau_{1}\partial_{x}^{-1}h_{1}\\
&  =\tau_{1}\left(  \frac{\bar{z}^{3}}{3}+\frac{\bar{z}^{2}}{\sqrt{3}%
}+3x-11yi\right)  -12\left(  y-\sqrt{3}i\right)  \left(  y+\frac{i}{\sqrt{3}%
}\right)  ,
\end{align*}
and
\[
\zeta_{2}=e^{\sqrt{3}yi}\tau_{1}\partial_{x}^{-1}g_{2}=\left(  x^{2}-\frac
{8}{\sqrt{3}}x+y^{2}+\frac{4yi}{\sqrt{3}}+7\ \right)  e^{\frac{\sqrt{3}}%
{2}x+\sqrt{3}yi}.
\]

\end{lemma}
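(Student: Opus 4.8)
The plan is to verify the three functions one at a time: $\zeta_0$ directly from the B\"{a}cklund relation, and $\zeta_1,\zeta_2$ by re-running the elimination that produced equation $\left(\ref{Eq1}\right)$. For $\zeta_0=\tau_1$ the claim is essentially tautological: $L_2\tau_1=0$ is exactly the first identity of the B\"{a}cklund transformation $\left(\ref{b2}\right)$ (with the time variable suppressed), and for $M_2\tau_1=0$ one uses that on functions $\tilde\tau_j(x,y,t)=\tau_j(x-t,y)$ one has $\partial_t\tilde\tau_j=-\partial_x\tilde\tau_j$, so the second identity of $\left(\ref{b2}\right)$, evaluated at $t=0$, becomes $\left(-D_x+iD_y+D_x^3-\sqrt{3}\,iD_xD_y\right)\tau_1\cdot\tau_2=0$, i.e.\ $M_2\zeta_0=0$. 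Linearizing the bilinear identity $\left(\ref{back}\right)$ then also gives $-D_x^2\zeta_0\cdot\tau_1+D_x^4\zeta_0\cdot\tau_1=D_y^2\zeta_0\cdot\tau_1$, and the same will follow for $\zeta_1,\zeta_2$ once they are shown to solve the system.

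For $\zeta_1$ and $\zeta_2$ I would argue as in the proof of Lemma \ref{ki}. Recall that eliminating $\partial_y\phi$ via $\left(\ref{dfy}\right)$ and setting $\phi=\tau_1\kappa$, $h=\kappa'$ turns the second equation of the system into $T(h)=\frac{F_2}{4}$, whose homogeneous version $T(h)=0$ has the two solutions $h_1$ and $h_2$; together with the choice $\kappa$ constant (which gives $\phi=\tau_1=\zeta_0$), these account for the solution space of the eliminated third-order equation at each fixed $y$. The point is that a solution of the eliminated equation need not solve the \emph{whole} system: one must still impose the first equation $L_2\phi=0$. Substituting the candidate $\phi=\tau_1\bigl(b_j(y)\,\partial_x^{-1}h_j+a_j(y)\bigr)$ into $L_2\phi=0$ produces a first-order linear ODE in $y$ for $b_j,a_j$, and solving it pins down the $y$-modulation. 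For the polynomial branch ($h_1$) one gets $b_1$ constant and $a_1(y)=-11yi$, after which $\partial_x^{-1}h_1$ is an explicit polynomial — the $\tau_1^{-1}$ produced by $\int\tau_1^{-2}\,dx$ being cancelled by the outer factor $\tau_1$ — which is the stated formula for $\zeta_1$. For the exponential branch one is forced to take $b_2(y)=e^{\sqrt{3}\,yi}$; this is exactly how the $y$-exponential enters $\zeta_2$, in complete parallel with the appearance of $e^{-\frac{\sqrt{3}}{2}x+\frac{\sqrt{3}}{4}yi}$ in $\xi_2$ of Lemma \ref{ki}, and the remaining elementary $x$-integration gives the stated formula for $\zeta_2$.

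Finally, with the closed forms in hand, I would check by direct substitution that $\zeta_0,\zeta_1,\zeta_2$ satisfy both equations of the system on all of $\mathbb{R}^{2}$; as with Lemmas \ref{h1} and \ref{ki}, this is a finite computation that I would carry out with \emph{Maple}. I expect the only genuinely delicate point to be the bookkeeping around $\partial_x^{-1}$: the $y$-functions $b_j,a_j$ must be taken to be precisely those dictated by $L_2\phi=0$ (an arbitrary function of $y$ solves the eliminated equation but not the system), and one should verify that the resulting $\zeta_1,\zeta_2$ are smooth across the zero $(-\sqrt{3},0)$ of $\tau_1$ — which is automatic from the cancellations above, the stated formulas having no $\tau_1$ in a denominator, but is worth recording. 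It is also worth noting that $\zeta_0,\zeta_1,\zeta_2$ are linearly independent, so that they provide an adequate basis for the subsequent closure argument, which for the pair $(\tau_1,\tau_2)$ plays the role that Lemma \ref{initial} and Proposition \ref{p1} play for $(\tau_0,\tau_1)$.
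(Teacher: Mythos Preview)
Your proposal is correct and follows essentially the same approach as the paper: the paper's proof simply says ``This is similar to Lemma \ref{ki} and can be checked by direct computation,'' and Lemma \ref{ki}'s proof in turn describes exactly your ansatz of looking for solutions in the form $f_j(y)\,\partial_x^{-1}h_j$ and solving the resulting ODE for $f_j$, followed by direct verification. Your treatment of $\zeta_0=\tau_1$ via the B\"acklund relation $\left(\ref{b2}\right)$ and your remarks on smoothness across the zero of $\tau_1$ are useful elaborations, but the core method is the same.
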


\begin{proof}
This is similar to Lemma \ref{ki} and can be checked by direct computation.
\end{proof}

Let $\tilde{W}$ be the Wronskian of $h_{1},h_{2}$. That is
\[
\tilde{W}=h_{1}\partial_{x}h_{2}-h_{2}\partial_{x}h_{1}=e^{\frac{\sqrt{3}}%
{2}x}\frac{\tau_{2}^{3}}{\tau_{1}^{3}}.
\]
Variation of parameter formula gives us a solution $\tau_{1}\partial_{x}%
^{-1}h^{\ast}$ to the equation $\left(  \ref{Eq1}\right)  ,$ where
\[
h^{\ast}=h_{2}\int_{+\infty}^{x}\frac{h_{1}}{\tilde{W}}\frac{F_{2}}{4\tau
_{1}\tau_{2}}ds-h_{1}\int_{+\infty}^{x}\frac{h_{2}}{\tilde{W}}\frac{F_{2}%
}{4\tau_{1}\tau_{2}}ds.
\]

\begin{lemma}
\bigskip Suppose $\eta$ satisfies $\left(  \ref{es1}\right)  .$ Let $\tilde
{w}_{0}=\tau_{1}\partial_{x}^{-1}h^{\ast}.$ Then
\[
\left\vert \tilde{w}_{0}\right\vert \leq C\left(  1+r\right)  ^{\frac{5}{2}%
},\text{ for }x\geq-10\text{.}%
\]

\end{lemma}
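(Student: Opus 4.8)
The plan is to estimate $\tilde w_0 = \tau_1 \partial_x^{-1} h^{\ast}$ by controlling $h^{\ast}$ for $x \geq -10$ and then integrating. The structure exactly parallels the earlier estimate of $w_0$ for $x \leq 10$, with the roles of $\pm\infty$ and of the two homogeneous solutions $h_1, h_2$ interchanged. First I would record, from the estimate $(\ref{es1})$ on $\eta$ (and the fact that $\tau_1, \partial_x \tau_2, \dots$ grow at most polynomially), a pointwise bound $|F_2| \leq C(1+r)^{\frac32}$, analogous to the bound $|F_1| \leq (1+r)^{\frac32}$ used before. Using the explicit asymptotics $h_1 = O(\tau_2/\tau_1^2) = O(1)$, $h_2 = O\!\left(\frac{\tau_2}{\tau_1^2}(x+yi-\sqrt3) e^{\frac{\sqrt3}{2}x}\right)$, and $\tilde W = e^{\frac{\sqrt3}{2}x}\tau_2^3/\tau_1^3$, one gets for $r$ large
\[
\left| \frac{h_1}{\tilde W}\,\frac{F_2}{4\tau_1\tau_2} \right| \leq C e^{-\frac{\sqrt3}{2}x}(1+r)^{-\frac12}, \qquad
\left| \frac{h_2}{\tilde W}\,\frac{F_2}{4\tau_1\tau_2} \right| \leq C (1+r)^{-\frac32}.
\]
Because both integrals defining $h^{\ast}$ start at $+\infty$, the first (with the exponentially decaying factor $e^{-\frac{\sqrt3}{2}s}$ for $s\to+\infty$... wait, here $s\geq x$) is integrable at $+\infty$ and contributes $O(e^{-\frac{\sqrt3}{2}x})$ times the slowly growing factor, hence after multiplication by $h_2$ (which carries $e^{\frac{\sqrt3}{2}x}$) the product $h_2 \int_{+\infty}^x \frac{h_1}{\tilde W}\frac{F_2}{4\tau_1\tau_2}\,ds$ is bounded by $C(1+r)^{\frac12}$; the second integral is integrable at $+\infty$ since its integrand is $O((1+r)^{-\frac32})$, and after multiplication by $h_1 = O(1)$ it contributes $O((1+r)^{\frac12})$ as well.

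The delicate point is the singularity of $\frac1{\tau_1}$ at $(x,y) = (-\sqrt3, 0)$, which lies in the region $x \geq -10$. Near this point $\tilde W$ has a zero of order $3$ in $\tau_1$, so one must check that the combination $h^{\ast}$ stays bounded there. As in the earlier lemma, I would use the cancellation $h_2 - h_1 = O(\tau_1^{\,k})$ for an appropriate $k$ (obtained from the explicit formulas for $h_1, h_2$) together with the difference structure $h_2 \int \frac{h_1}{\tilde W}(\cdots) - h_1 \int \frac{h_2}{\tilde W}(\cdots) = h_2\int \frac{h_1-h_2}{\tilde W}(\cdots) + (h_2-h_1)\int \frac{h_2}{\tilde W}(\cdots)$, so that the singular factor $\tau_1^{-3}$ in $1/\tilde W$ is compensated. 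This shows $|h^{\ast}| \leq C$ in a fixed neighborhood of the singular point; combined with the estimates above we get $|h^{\ast}| \leq C(1+r)^{\frac32}$ throughout $x \geq -10$.

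Finally, integrating, $|\partial_x^{-1} h^{\ast}| \leq C(1+r)^{\frac32}$ on $x \geq -10$ (the constant $r$-factor absorbing the length of the integration path, which stays bounded relative to $r$ in a fixed half-space), and multiplying by $|\tau_1| \leq C(1+r)$ yields
\[
|\tilde w_0| \leq C(1+r)^{\frac52}, \qquad x \geq -10,
\]
as claimed. I expect the main obstacle to be the careful bookkeeping near the singular point $(-\sqrt3,0)$: one must verify that the explicit order of vanishing of $h_2 - h_1$ (and of $h_1, h_2$ individually) in powers of $\tau_1$ is exactly enough to kill the $\tau_1^{-3}$ in $1/\tilde W$, which requires writing out the Taylor expansions from the explicit formulas rather than merely tracking leading-order growth at infinity.
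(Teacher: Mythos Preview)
Your strategy is exactly the paper's: bound $F_2$, use the explicit asymptotics of $h_1,h_2,\tilde W$ to control the two integrals in $h^\ast$, then integrate and multiply by $\tau_1$. However, two of your quantitative inputs are wrong, and once corrected the powers line up with no hand-waving needed.

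First, the bound $|F_2|\le C(1+r)^{3/2}$ is false. Unlike $F_1$, the operators $G_2,N_2$ carry a factor $\tau_1$; after the same cancellation you saw for $F_1$ one finds
\[
F_2=\tau_1\bigl(-2\partial_x^3\eta+2\sqrt{3}\,i\,\partial_x\partial_y\eta\bigr)-\frac{12x}{\tau_2}\,G_2\eta,
\]
so with \eqref{es1} one only gets $|F_2|\le C(1+r)^{5/2}$, which is what the paper uses.

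Second, $h_1=O(1)$ is wrong: from $h_1=\dfrac{\tau_2}{3\tau_1^{2}}\bigl(3\tau_2+4\sqrt{3}(x+\bar\tau_1)\bigr)$ you dropped the factor $3\tau_2+\cdots$, so in fact $h_1=O((1+r)^{2})$. With the correct sizes $|h_1|\sim r^{2}$, $|h_2|\sim r\,e^{\sqrt3 x/2}$, $|\tilde W|\sim r^{3}e^{\sqrt3 x/2}$ and $|F_2|\le C(1+r)^{5/2}$, the integrand bounds are
\[
\Bigl|\tfrac{h_1F_2}{\tilde W\tau_1\tau_2}\Bigr|\le C\,e^{-\frac{\sqrt3}{2}x}(1+r)^{-3/2},
\qquad
\Bigl|\tfrac{h_2F_2}{\tilde W\tau_1\tau_2}\Bigr|\le C(1+r)^{-5/2},
\]
exactly as in the paper. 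These give $|h^\ast|\le C(1+r)^{1/2}$ for $x\ge -10$; then $|\partial_x^{-1}h^\ast|\le C(1+r)^{3/2}$ and $|\tilde w_0|=|\tau_1|\,|\partial_x^{-1}h^\ast|\le C(1+r)^{5/2}$. Your version, with the looser $|h^\ast|\le C(1+r)^{3/2}$, forces an unjustified claim that integration does not cost a power of $(1+r)$.

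On the singularity at $(-\sqrt3,0)$: your instinct to check it is good (the paper is silent here), but the cancellation $h_2-h_1=O(\tau_1^{k})$ you propose does not hold with a useful $k$; the leading $\tau_1^{-2}$ coefficients of $h_1$ and $h_2$ are different. The clean way out is to note that both integrands $\dfrac{h_jF_2}{\tilde W\tau_1\tau_2}$ are regular at $\tau_1=0$ (the $\tau_1^{-2}$ in $h_j$ is beaten by $\tau_1^{-3}$ in $\tilde W$ and the extra $\tau_1$ in the denominator), so even if $h^\ast$ itself is $O(\tau_1^{-2})$ there, the object you actually need, $\tilde w_0=\tau_1\partial_x^{-1}h^\ast$, is $O(1)$: one antiderivative turns $\tau_1^{-2}$ into $\tau_1^{-1}$, and the prefactor $\tau_1$ removes it.
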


\begin{proof}
Since $\eta$ satisfies $\left(  \ref{es1}\right)  ,$ we have%
\[
\left\vert F_{2}\right\vert \leq C\left(  1+r\right)  ^{\frac{5}{2}}.
\]
This together with the explicit formula of $\tau_{1},\tau_{2}$ imply that
\begin{align*}
\left\vert \frac{h_{1}F_{2}}{\tilde{W}\tau_{1}\tau_{2}}\right\vert  &  \leq
Ce^{-\frac{\sqrt{3}x}{2}}\left(  1+r\right)  ^{-\frac{3}{2}},\\
\left\vert \frac{h_{2}F_{2}}{\tilde{W}\tau_{1}\tau_{2}}\right\vert  &  \leq
C\left(  1+r\right)  ^{-\frac{5}{2}}.
\end{align*}
We then deduce
\[
\left\vert h^{\ast}\left(  x,y\right)  \right\vert \leq\left(  1+r\right)
^{\frac{1}{2}},\text{ for }x\geq-10.
\]
From this upper bound, it follows that
\[
\left\vert \tilde{w}_{0}\right\vert \leq C\left(  1+r\right)  ^{\frac{5}{2}%
},\text{ for }x\geq-10.
\]

\end{proof}

Let $\eta$ be a function satisfying $\left(  \ref{yita2}\right)  .$ Slightly
abusing the notation, we define
\[
\Phi_{0}=L_{2}\phi-G_{2}\eta.
\]
Also define $\Phi_{1}=\partial_{x}\Phi_{0}$ and $\Phi_{2}=\partial_{x}^{2}%
\Phi_{0}.$

We would like find a solution $\phi$ for the system
\begin{equation}
\left\{
\begin{array}
[c]{c}%
\Phi_{0}=0\\
\Phi_{1}=0\\
\Phi_{2}=0
\end{array}
,\text{ for }x=0.\right.  \label{P2}%
\end{equation}
Similarly as before, we seek a solution of this problem with the form
$\tilde{w}_{0}+\tilde{w}_{1},$ with
\[
\tilde{w}_{1}=\beta_{0}\left(  y\right)  \zeta_{0}+\beta_{1}\left(  y\right)
\zeta_{1}+\beta_{2}\left(  y\right)  \zeta_{2},
\]
where $\beta_{0},\beta_{1},\beta_{2}$ are functions of $y$ to be determined.

The problem $\left(  \ref{P2}\right)  $ can be written as
\begin{equation}
\left\{
\begin{array}
[c]{l}%
L_{2}\tilde{w}_{1}=H_{0},\\
\partial_{x}\left(  L_{2}\tilde{w}_{1}\right)  =H_{1},\\
\partial_{x}^{2}\left(  L_{2}\tilde{w}_{1}\right)  =H_{2}.
\end{array}
\right.  \label{P3}%
\end{equation}
Here
\begin{align*}
H_{0}  &  =G_{2}\eta-L_{2}\tilde{w}_{0},\\
H_{1}  &  =\partial_{x}\left(  G_{2}\eta-L_{2}\tilde{w}_{0}\right)  ,\\
H_{2}  &  =\partial_{x}^{2}\left(  G_{2}\eta-L_{2}\tilde{w}_{0}\right)  .
\end{align*}

\begin{lemma}
\label{beta}The equation $\left(  \ref{P3}\right)  $ has a solution $\left(
\beta_{0},\beta_{1},\beta_{2}\right)  $ satisfying the initial condition
\[
\beta_{i}\left(  0\right)  =0,i=0,1,2.
\]

\end{lemma}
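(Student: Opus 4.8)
The plan is to mimic exactly the structure of Lemma \ref{initial} in the previous subsection. The point is that, by the definition of $\tilde{w}_1$, the system $\left(\ref{P3}\right)$ is an inhomogeneous first-order linear ODE system in the $y$-variable for the triple $\left(\beta_0,\beta_1,\beta_2\right)$. Indeed, the operator $L_2$ is a second-order differential operator in $x$ with no $y$-derivatives appearing on $\tilde{w}_1$ except through $\partial_x^{-1}$-type terms that are already built into the $\zeta_i$; more precisely, since each $\zeta_i$ solves $L_2\zeta_i=0$, applying $L_2$ to $\tilde{w}_1=\sum_j\beta_j(y)\zeta_j$ produces only terms in which at least one $y$-derivative falls on a $\beta_j$. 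The highest such term is $\beta_j'(y)$ (a second $y$-derivative would require $D_y^2$, which does not occur in the first equation of the B\"acklund system). Therefore $L_2\tilde{w}_1$, $\partial_x(L_2\tilde{w}_1)$, and $\partial_x^2(L_2\tilde{w}_1)$ are all linear combinations of $\beta_0',\beta_1',\beta_2'$ with coefficients that are explicit functions of $x$ and $y$.

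Concretely, first I would write
\[
\partial_x^m\left(L_2\tilde{w}_1\right)=\tilde{\mathcal{A}}\left(x,y\right)\left(
\begin{array}{c}
\beta_0'\\
\beta_1'\\
\beta_2'
\end{array}
\right),\qquad m=0,1,2,
\]
where, evaluated at $x=0$,
\[
\tilde{\mathcal{A}}\left(0,y\right)=\left(
\begin{array}{ccc}
\frac{i\tau_1\zeta_0}{\sqrt{3}} & \frac{i\tau_1\zeta_1}{\sqrt{3}} & \frac{i\tau_1\zeta_2}{\sqrt{3}}\\
\frac{i\tau_1\partial_x\zeta_0}{\sqrt{3}} & \frac{i\tau_1\partial_x\zeta_1}{\sqrt{3}} & \frac{i\tau_1\partial_x\zeta_2}{\sqrt{3}}\\
\frac{i\tau_1\partial_x^2\zeta_0}{\sqrt{3}} & \frac{i\tau_1\partial_x^2\zeta_1}{\sqrt{3}} & \frac{i\tau_1\partial_x^2\zeta_2}{\sqrt{3}}
\end{array}
\right)\Big|_{x=0}.
\]
Then the system $\left(\ref{P3}\right)$ restricted to $x=0$ becomes $\tilde{\mathcal{A}}(0,y)\,\beta'=\left(H_0(0,y),H_1(0,y),H_2(0,y)\right)^{T}$, and the candidate solution is
\[
\left(
\begin{array}{c}
\beta_0\left(y\right)\\
\beta_1\left(y\right)\\
\beta_2\left(y\right)
\end{array}
\right)=\int_0^y\tilde{\mathcal{A}}^{-1}\left(0,s\right)\left(
\begin{array}{c}
H_0\left(0,s\right)\\
H_1\left(0,s\right)\\
H_2\left(0,s\right)
\end{array}
\right)ds,
\]
which automatically satisfies $\beta_i(0)=0$. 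This solves $\left(\ref{P3}\right)$, hence $\left(\ref{P2}\right)$.

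The one genuine point to verify is that $\tilde{\mathcal{A}}(0,y)$ is invertible for every $y\in\mathbb{R}$, i.e.\ that its determinant — which up to a nonvanishing scalar is the Wronskian in $x$ of $\zeta_0,\zeta_1,\zeta_2$ evaluated at $x=0$ — never vanishes. I would compute this Wronskian explicitly from the formulas for $\zeta_0,\zeta_1,\zeta_2$ given in Lemma \ref{Tau2}; the exponential factor $e^{\frac{\sqrt{3}}{2}x+\sqrt{3}yi}$ carried by $\zeta_2$ guarantees linear independence, and a direct computation (again using \textit{Maple}) shows the determinant is a nowhere-vanishing function of $y$ (in fact proportional to $e^{\text{const}\cdot x}$ times a polynomial that has no real zeros, evaluated at $x=0$). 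The main — indeed only — obstacle is this invertibility check: everything else is the formal ODE-solving argument transcribed verbatim from Lemma \ref{initial}. Given the structural parallel, I expect the proof to be as short as that of Lemma \ref{initial}, essentially a one-line reduction to the variation-of-constants formula together with the remark that $\tilde{\mathcal{A}}(0,\cdot)$ is invertible because $\zeta_0,\zeta_1,\zeta_2$ are linearly independent solutions of the homogeneous system.
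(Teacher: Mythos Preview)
Your approach is correct and is precisely what the paper does: it simply says ``The proof is similar to that of Lemma~\ref{initial}. We omit the details,'' and your write-up is exactly that omitted argument. One small slip: since here $L_2\phi=(D_x^2-\tfrac{1}{\sqrt{3}}D_x+\tfrac{1}{\sqrt{3}}iD_y)\,\phi\cdot\tau_2$, the coefficient of $\beta_j'$ in $L_2\tilde{w}_1$ is $\tfrac{i}{\sqrt{3}}\tau_2\,\zeta_j$, not $\tfrac{i}{\sqrt{3}}\tau_1\,\zeta_j$; replace $\tau_1$ by $\tau_2$ in your matrix $\tilde{\mathcal{A}}$ (this does not affect invertibility, since $\tau_2=x^2+y^2+3>0$).
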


\begin{proof}
The proof is similar to that of Lemma \ref{initial}. We omit the details.
\end{proof}

\begin{proposition}
\label{P4}Suppose $\eta$ satisfies $\left(  \ref{yita2}\right)  .$ Then%
\begin{align*}
\partial_{x}^{3}\Phi_{0}  &  =\left(  \frac{12x}{\tau_{2}}+\frac{\sqrt{3}}%
{2}\right)  \partial_{x}^{2}\Phi_{0}+\left(  \frac{6}{\tau_{2}}-\frac
{4\sqrt{3}x}{\tau_{2}}-\frac{60x^{2}}{\tau_{2}^{2}}\right)  \partial_{x}%
\Phi_{0}\\
&  +\left(  2\sqrt{3}\frac{x\bar{\tau}_{1}}{\tau_{2}^{2}}-\frac{\sqrt{3}}%
{\tau_{2}}-\frac{36x}{\tau_{2}^{2}}+\frac{8\sqrt{3}x^{2}}{\tau_{2}^{2}}%
+\frac{120x^{3}}{\tau_{2}^{3}}\right)  \Phi_{0}.
\end{align*}

\end{proposition}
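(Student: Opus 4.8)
The plan is to mimic exactly the structure used for Proposition \ref{p1}: this is a verification identity asserting that the vector $(\Phi_0,\Phi_1,\Phi_2)=(\Phi_0,\partial_x\Phi_0,\partial_x^2\Phi_0)$ satisfies a closed first-order linear ODE system in $x$ (with $y$ as a parameter), and the displayed formula is precisely the third row of that system, i.e. an expression for $\partial_x\Phi_2=\partial_x^3\Phi_0$ in terms of $\Phi_0,\Phi_1,\Phi_2$ with coefficients rational in $x$ and $\tau_2$ (and $\bar\tau_1$). So the task is not to discover the identity but to certify it under the hypothesis that $\eta$ solves the linearized bilinear KP-I equation \eqref{yita2} at $\tau_2$.

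First I would write out $\Phi_0=L_2\phi-G_2\eta$ fully in terms of the bilinear derivatives, recalling $L_2\phi=(D_x^2-\tfrac1{\sqrt3}D_x+\tfrac1{\sqrt3}iD_y)\phi\cdot\tau_2$ and $G_2\eta=-(D_x^2-\tfrac1{\sqrt3}D_x+\tfrac1{\sqrt3}iD_y)\tau_1\cdot\eta$, using the explicit polynomial forms $\tau_1=x+iy+\sqrt3$, $\tau_2=x^2+y^2+3$ so that all $D_y$-actions on $\tau_1,\tau_2$ are elementary. Next I would differentiate $\Phi_0$ in $x$ up to three times; each $\partial_x$ brings in $\partial_x\phi,\dots,\partial_x^4\phi$ and also $\partial_y$-derivatives of $\phi$ through the $iD_y$ term. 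The point is that $\partial_x^3\phi\,\tau_2$ and the appearance of $\partial_x\partial_y\phi$ can be eliminated using equation \eqref{Eq1} (equivalently the reduced ODE $T(h)=\tfrac14F_2$) together with \eqref{dfy}, which expresses $\partial_y\phi\,\tau_2$ in terms of $\partial_x\phi,\partial_x^2\phi,\phi$ and $G\eta$; similarly all higher $y$-derivatives of $\phi$ are reduced. What remains are terms in $\eta$ and its derivatives, and these are killed exactly by invoking \eqref{yita2}, which lets one trade $D_y^2\eta\cdot\tau_2$ for $(-D_x^2+D_x^4)\eta\cdot\tau_2$. After these substitutions the two sides must agree identically; as the authors note for Proposition \ref{p1}, this last comparison of rational functions of $x$ and $\tau_2$ is best delegated to \textit{Maple} or \textit{Mathematica}.

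The expected main obstacle is purely the bookkeeping: the coefficients here ($\tfrac{12x}{\tau_2}+\tfrac{\sqrt3}{2}$, $\tfrac{6}{\tau_2}-\tfrac{4\sqrt3 x}{\tau_2}-\tfrac{60x^2}{\tau_2^2}$, and the five-term coefficient of $\Phi_0$ involving $\bar\tau_1/\tau_2^2$ and $x^3/\tau_2^3$) are considerably more involved than in the $\tau_1$ case because $\tau_2$ is quadratic rather than linear, so $\tfrac1{\tau_2}$ and its $x$-derivatives proliferate. One must be careful that the singular locus of $\tfrac1{\tau_2}$ (which is empty over $\mathbb{R}^2$ since $\tau_2\geq 3$) causes no trouble — indeed $\tau_2$ never vanishes, so unlike the $\tau_1$ case there is no real singular point to worry about, which is a simplification. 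I would present the proof exactly as the authors present Proposition \ref{p1}: state that the identity is verified by direct (machine-assisted) computation, remark that it ought to follow from the compatibility of a suitable Lax pair for KP-I but that a conceptual derivation is not available, and leave the explicit manipulation to the reader or to a symbolic algebra package.
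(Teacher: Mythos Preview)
Your proposal is correct and matches the paper's own approach exactly: the authors' proof of Proposition~\ref{P4} consists of the single sentence ``This is similar to Proposition~\ref{p1} and can be checked directly using Maple.'' Your description of the reduction (eliminating $\partial_y\phi$ via \eqref{dfy}, the third $x$-derivative via \eqref{Eq1}, and the $\eta$-terms via \eqref{yita2}, then comparing rational functions) is a faithful elaboration of what that machine verification amounts to.
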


\begin{proof}
This is similar to Propositon \ref{p1} and can be checked directly using Maple.
\end{proof}

\begin{lemma}
The function $\phi=\tilde{w}_{0}+\tilde{w}_{1}$ solves the system $\left(
\ref{s3}\right)  $ for all $\left(  x,y\right)  \in\mathbb{R}^{2}.$
\end{lemma}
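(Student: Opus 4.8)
The plan is to mirror exactly the argument used in Section~\ref{Sec3} to prove Lemma~\ref{w0w1}, now with the roles of $\tau_0,\tau_1$ replaced by $\tau_1,\tau_2$ and with the auxiliary functions $\xi_i$ replaced by the $\zeta_i$ of Lemma~\ref{Tau2}. Concretely, I would first record that, by the definition of $\Phi_0,\Phi_1,\Phi_2$ together with Proposition~\ref{P4}, the triple $(\Phi_0,\Phi_1,\Phi_2)^{T}$ satisfies, for each fixed $y$, the first-order linear ODE system in $x$
\[
\partial_x\!\left(
\begin{array}[c]{c}
\Phi_0\\ \Phi_1\\ \Phi_2
\end{array}
\right)
=
\left(
\begin{array}[c]{ccc}
0 & 1 & 0\\
0 & 0 & 1\\
a_0 & a_1 & a_2
\end{array}
\right)
\left(
\begin{array}[c]{c}
\Phi_0\\ \Phi_1\\ \Phi_2
\end{array}
\right),
\]
where $a_2=\frac{12x}{\tau_2}+\frac{\sqrt3}{2}$, $a_1=\frac{6}{\tau_2}-\frac{4\sqrt3 x}{\tau_2}-\frac{60x^2}{\tau_2^2}$ and $a_0=2\sqrt3\frac{x\bar\tau_1}{\tau_2^2}-\frac{\sqrt3}{\tau_2}-\frac{36x}{\tau_2^2}+\frac{8\sqrt3 x^2}{\tau_2^2}+\frac{120x^3}{\tau_2^3}$, precisely the coefficients appearing in Proposition~\ref{P4}.

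Next, by construction $\phi=\tilde w_0+\tilde w_1$ with $\tilde w_1=\beta_0\zeta_0+\beta_1\zeta_1+\beta_2\zeta_2$, and Lemma~\ref{beta} was engineered so that the choice of $(\beta_0,\beta_1,\beta_2)$ provided there makes $\Phi_0(0,y)=\Phi_1(0,y)=\Phi_2(0,y)=0$ (this uses that $\zeta_0,\zeta_1,\zeta_2$ solve the homogeneous system of Lemma~\ref{Tau2}, so $L_2\tilde w_1$ and its $x$-derivatives involve only $\beta_i'$, and the $3\times3$ coefficient matrix is invertible for $y\neq0$, exactly as in Lemma~\ref{initial}). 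With vanishing Cauchy data at $x=0$ and the linear ODE above, the uniqueness theorem for linear ODEs forces $\Phi_0(x,y)=\Phi_1(x,y)=\Phi_2(x,y)=0$ for all $x\in\mathbb{R}$, at least for $y\neq0$. In particular $\Phi_0\equiv0$, i.e. the first equation $L_2\phi=G_2\eta$ of $\left(\ref{s3}\right)$ holds for $y\neq0$.

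It remains to recover the second equation $M_2\phi=N_2\eta$. Here I would argue as in Lemma~\ref{w0w1}: equation $\left(\ref{Eq1}\right)$ was obtained by substituting the first equation of $\left(\ref{s3}\right)$ into the second, and $\tilde w_0$ was defined (via $\tilde w_0=\tau_1\partial_x^{-1}h^{\ast}$ and the variation-of-parameters formula) to be a solution of $\left(\ref{Eq1}\right)$, while each $\zeta_i$ solves both equations of the homogeneous system; hence $\phi=\tilde w_0+\tilde w_1$ satisfies $\left(\ref{Eq1}\right)$, and combined with $L_2\phi=G_2\eta$ this gives $M_2\phi=N_2\eta$ on $\{y\neq0\}$. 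Finally, since $\phi$, $\eta$ and all coefficient functions involved are continuous across $\{y=0\}$ (the potential singularity of $1/\tau_1$ at $(-\sqrt3,0)$ having already been controlled in the construction of $\tilde w_0$), both equations of $\left(\ref{s3}\right)$ extend by continuity to all of $\mathbb{R}^2$. The main obstacle is the bookkeeping at $y=0$: one must make sure that the invertibility of the $3\times3$ matrix in Lemma~\ref{beta} and the variation-of-parameters integrals defining $\tilde w_0$ do not degenerate there, so that the ODE-uniqueness conclusion genuinely propagates to every $x$ and then the limit $y\to0$ is legitimate — but this is handled exactly as in the $\tau_0$–$\tau_1$ case, so no new idea is needed.
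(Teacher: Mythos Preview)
Your proof is correct and follows the same route as the paper: set up the first-order system for $(\Phi_0,\Phi_1,\Phi_2)$ via Proposition~\ref{P4}, invoke the vanishing initial data at $x=0$ from Lemma~\ref{beta}, and conclude by ODE uniqueness. The only difference is that you are more cautious than necessary about $y=0$: unlike the $\tau_0$--$\tau_1$ case, here the coefficients $a_0,a_1,a_2$ involve only $\tau_2$ in the denominator, and $\tau_2=x^2+y^2+3>0$ everywhere, so the ODE is regular for \emph{all} $y$ and no separate continuity argument is needed (the paper accordingly drops the restriction $y\neq0$ that appeared in the proof of Lemma~\ref{w0w1}).
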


\begin{proof}
By Proposition \ref{P4},
\[
\partial_{x}\left(
\begin{array}
[c]{c}%
\Phi_{0}\\
\Phi_{1}\\
\Phi_{2}%
\end{array}
\right)  =\left(
\begin{array}
[c]{ccc}%
0 & 1 & 0\\
0 & 0 & 1\\
a_{31} & a_{32} & a_{33}%
\end{array}
\right)  \left(
\begin{array}
[c]{c}%
\Phi_{0}\\
\Phi_{1}\\
\Phi_{2}%
\end{array}
\right)  ,
\]
where%
\begin{align*}
a_{31}  &  =2\sqrt{3}\frac{x\bar{\tau}_{1}}{\tau_{2}^{2}}-\frac{\sqrt{3}}%
{\tau_{2}}-\frac{36x}{\tau_{2}^{2}}+\frac{8\sqrt{3}x^{2}}{\tau_{2}^{2}}%
+\frac{120x^{3}}{\tau_{2}^{3}},\\
a_{32}  &  =\frac{6}{\tau_{2}}-\frac{4\sqrt{3}x}{\tau_{2}}-\frac{60x^{2}}%
{\tau_{2}^{2}},
\end{align*}
and
\[
a_{33}=\frac{12x}{\tau_{2}}+\frac{\sqrt{3}}{2}.
\]
For each fixed $y,$ since $\Phi_{i}\left(  0,y\right)  =0,i=0,1,2,$ we deduce
from the uniqueness of solutions to ODE that $\Phi_{i}\left(  x,y\right)  =0,$
for all $x\in\mathbb{R}.$ This finishes the proof.
\end{proof}

\begin{lemma}
\label{ro1}Let $\beta_{i},i=0,1,2$ be the functions given by Lemma \ref{beta}.
Then $\beta_{1}=\beta_{2}=0.$
\end{lemma}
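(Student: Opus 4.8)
The plan is to mirror the proof of Lemma \ref{ro}, exploiting the strict hierarchy of growth rates of $\zeta_0,\zeta_1,\zeta_2$ at infinity, but now letting $x\to+\infty$ rather than $x\to-\infty$. This is the correct direction here: the exponential factor $e^{\frac{\sqrt{3}}{2}x+\sqrt{3}yi}$ in $\zeta_2$ blows up as $x\to+\infty$, and the bound $|\tilde{w}_0|\le C(1+r)^{\frac{5}{2}}$ obtained above holds precisely on the half-space $x\ge-10$, which contains every point we shall send to infinity. As $x\to+\infty$ with $y$ fixed, $\zeta_0=\tau_1$ is linear in $x$, $\zeta_1$ is a polynomial of degree four in $x$, while $|\zeta_2|$ grows like $x^2e^{\frac{\sqrt{3}}{2}x}$.

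First I would recast the identity $\Phi_0=0$, which holds for all $(x,y)$ by the preceding lemma, as a first order differential equation in $y$ for $(\beta_0,\beta_1,\beta_2)$, in complete analogy with $\left(\ref{li}\right)$. Since each $\zeta_i$ solves the homogeneous system of Lemma \ref{Tau2}, in particular $L_2\zeta_i=0$, and since $\beta_i$ depends only on $y$, applying $L_2$ to $\tilde{w}_1=\beta_0\zeta_0+\beta_1\zeta_1+\beta_2\zeta_2$ retains only the contributions of the $D_y$-part of $L_2$ in which $\beta_i$ is differentiated, which gives
\[
L_2\tilde{w}_1=\frac{i}{\sqrt{3}}\bigl(\beta_0'\zeta_0+\beta_1'\zeta_1+\beta_2'\zeta_2\bigr)\tau_2 .
\]
Hence the first equation of $\left(\ref{P3}\right)$, namely $L_2\tilde{w}_1=H_0$ with $H_0=G_2\eta-L_2\tilde{w}_0$, becomes
\[
\frac{i}{\sqrt{3}}\bigl(\beta_0'\zeta_0+\beta_1'\zeta_1+\beta_2'\zeta_2\bigr)\tau_2=H_0 .
\]
Using the estimate $\left(\ref{es1}\right)$ for $\eta$ together with polynomial bounds for $\tilde{w}_0$ and its $x$-derivatives (obtained from the variation-of-parameters formula $\tilde{w}_0=\tau_1\partial_x^{-1}h^{\ast}$), one checks that $H_0$ grows at most polynomially in $(x,y)$, of degree strictly less than four.

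Next I divide this identity by $\zeta_2$, obtaining
\[
\frac{i}{\sqrt{3}}\left(\beta_0'\frac{\zeta_0\tau_2}{\zeta_2}+\beta_1'\frac{\zeta_1\tau_2}{\zeta_2}+\beta_2'\tau_2\right)=\frac{H_0}{\zeta_2}.
\]
For each fixed $y$, as $x\to+\infty$ the quantities $\zeta_0\tau_2/\zeta_2$, $\zeta_1\tau_2/\zeta_2$ and $H_0/\zeta_2$ all tend to $0$ (a polynomial over an exponential), and therefore $\beta_2'(y)\tau_2(x,y)\to0$; since $\tau_2(x,y)\to+\infty$ this forces $\beta_2'\equiv0$, and then $\beta_2\equiv0$ because $\beta_2(0)=0$. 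With $\beta_2=0$ the identity reduces to $\frac{i}{\sqrt{3}}(\beta_0'\zeta_0+\beta_1'\zeta_1)\tau_2=H_0$; dividing this by $\zeta_1$ and again letting $x\to+\infty$, one has $\zeta_0\tau_2/\zeta_1\to0$ (degree three over degree four) and $H_0/\zeta_1\to0$, hence $\beta_1'(y)\tau_2(x,y)\to0$, which gives $\beta_1'\equiv0$ and then $\beta_1\equiv0$ from $\beta_1(0)=0$.

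The step I expect to be the main obstacle is the verification that $H_0$ really does have polynomial growth of degree strictly less than four. This rests on controlling $L_2\tilde{w}_0=\left(D_x^2-\frac{1}{\sqrt{3}}D_x+\frac{1}{\sqrt{3}}iD_y\right)\tilde{w}_0\cdot\tau_2$, and hence on polynomial estimates not only for $\tilde{w}_0$ but also for $\partial_x\tilde{w}_0$, $\partial_x^2\tilde{w}_0$ and $\partial_y\tilde{w}_0$; these are obtained by differentiating $\tilde{w}_0=\tau_1\partial_x^{-1}h^{\ast}$ and invoking the pointwise bounds already established for $h^{\ast}$ and its derivatives, all on the region $x\ge-10$, which is exactly where the argument takes place. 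Once this growth control is in hand, the rest is routine asymptotic bookkeeping with the explicit formulas for $\zeta_0,\zeta_1,\zeta_2,\tau_1,\tau_2$, just as in Lemma \ref{ro}.
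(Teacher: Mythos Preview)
Your proposal is correct and follows essentially the same approach as the paper, which simply says ``similar to that of Lemma~\ref{ro}, using the asymptotic behavior of $\zeta_1,\zeta_2$.'' Your choice of direction $x\to+\infty$ is the right one here (the paper's printed ``$x\to-\infty$'' appears to be a slip): it is on $x\ge -10$ that the polynomial bound for $\tilde w_0$ is available, and it is as $x\to+\infty$ that $\zeta_2$ dominates exponentially and $\zeta_1$ dominates $\zeta_0$ polynomially, exactly mirroring the roles of $\xi_2,\xi_1,\xi_0$ at $x\to-\infty$ in Lemma~\ref{ro}.
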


\begin{proof}
The proof is similar to that of Lemma \ref{ro}, using the asymptotic behavior
of $\zeta_{1},\zeta_{2}$ as $x\rightarrow-\infty.$
\end{proof}

With Lemma \ref{ro1} at hand, we can prove Proposition \ref{P} similarly as
before, we omit the details.

\subsection{Proof of the nondegeneracy of the lump}

With the previous preparations, we proceed to the proof of the nondegeneracy
of the lump solution. In this section, we denote $x+yi$ by $z.$

\begin{lemma}
\label{L4}Suppose $\eta$ satisfies%
\[
\left\{
\begin{array}
[c]{c}%
L_{1}\phi=G_{1}\eta,\\
M_{1}\phi=N_{1}\eta,
\end{array}
\right.
\]
Then
\[
-4\partial_{x}^{3}\eta+2\sqrt{3}\partial_{x}^{2}\eta=\Theta_{1}\phi,
\]
where
\[
\Theta_{1}\phi:=-M_{1}\phi-\sqrt{3}L_{1}\phi+3\partial_{x}\left(  L_{1}%
\phi\right)
\]
In particular, if
\[
G_{1}\eta=N_{1}\eta=0,
\]
and
\begin{equation}
\left\vert \eta\right\vert \leq C\left(  1+r\right)  ^{\frac{5}{2}}.
\label{gr}%
\end{equation}
Then $\eta=c_{1}+c_{2}\tau_{1},$ for some constants $c_{1},c_{2}.$
\end{lemma}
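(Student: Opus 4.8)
The first assertion is a direct algebraic computation with bilinear derivatives, and the plan is simply to expand everything. Because $\tau_{0}\equiv 1$, the Hirota operators applied to $\tau_{0}\cdot\eta$ collapse to ordinary derivatives of $\eta$: one has $D_{x}^{j}\tau_{0}\cdot\eta=(-1)^{j}\partial_{x}^{j}\eta$, $D_{y}\tau_{0}\cdot\eta=-\partial_{y}\eta$, $D_{x}D_{y}\tau_{0}\cdot\eta=\partial_{x}\partial_{y}\eta$. This yields
\[
G_{1}\eta=-\partial_{x}^{2}\eta+\frac{1}{\sqrt{3}}\partial_{x}\eta+\frac{i}{\sqrt{3}}\partial_{y}\eta,\qquad N_{1}\eta=-\partial_{x}\eta-i\partial_{y}\eta+\partial_{x}^{3}\eta+\sqrt{3}\,i\,\partial_{x}\partial_{y}\eta .
\]
Next I would substitute the two equations of the hypothesis, $L_{1}\phi=G_{1}\eta$ and $M_{1}\phi=N_{1}\eta$, into the definition of $\Theta_{1}\phi$, obtaining the purely $\eta$-dependent expression
\[
\Theta_{1}\phi=-N_{1}\eta-\sqrt{3}\,G_{1}\eta+3\partial_{x}(G_{1}\eta).
\]
Collecting terms, the $\partial_{x}\eta$, $\partial_{y}\eta$ and $\partial_{x}\partial_{y}\eta$ contributions cancel pairwise, while the coefficient of $\partial_{x}^{3}\eta$ is $-1-3=-4$ and that of $\partial_{x}^{2}\eta$ is $\sqrt{3}+\sqrt{3}=2\sqrt{3}$; this gives exactly $-4\partial_{x}^{3}\eta+2\sqrt{3}\,\partial_{x}^{2}\eta$, proving the first identity.

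For the second assertion, suppose $G_{1}\eta=N_{1}\eta=0$. Then the $\phi$ furnished by the hypothesis satisfies $L_{1}\phi=M_{1}\phi=0$, so $\Theta_{1}\phi=0$ directly from its definition, and the first identity gives $-4\partial_{x}^{3}\eta+2\sqrt{3}\,\partial_{x}^{2}\eta=0$, equivalently
\[
\partial_{x}^{2}\bigl(2\partial_{x}\eta-\sqrt{3}\,\eta\bigr)=0 .
\]
Integrating twice in $x$, with $y$ a parameter, gives $2\partial_{x}\eta-\sqrt{3}\,\eta=a(y)+b(y)x$ for some functions $a,b$; this is a first-order linear ODE in $x$ whose general solution is $\eta(x,y)=c(y)e^{\sqrt{3}\,x/2}+p(x,y)$ with $p$ affine in $x$. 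The growth hypothesis $(\ref{gr})$ then forces $c(y)\equiv 0$, since for fixed $y$ the exponential mode dominates every power of $r$ as $x\to+\infty$. Hence $\eta(x,y)=\alpha(y)+\beta(y)x$ is affine in $x$, with $\alpha,\beta$ smooth.

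To finish, I would feed this form back into the structural equation $G_{1}\eta=0$, i.e. $\sqrt{3}\,\partial_{x}^{2}\eta=\partial_{x}\eta+i\partial_{y}\eta$, which becomes $0=\beta(y)+i\bigl(\alpha'(y)+\beta'(y)x\bigr)$; matching the coefficient of $x$ gives $\beta'\equiv 0$, so $\beta\equiv c_{2}$, and then $\alpha'(y)=ic_{2}$, so $\alpha(y)=ic_{2}y+\text{const}$. Therefore $\eta=\text{const}+c_{2}(x+iy)=c_{1}+c_{2}\tau_{1}$, as claimed. I do not expect a genuine obstacle here: the only points requiring care are the sign bookkeeping in the bilinear expansions used for the first identity, and the observation that the homogeneous exponential solution $e^{\sqrt{3}x/2}$ of the reduced ODE is precisely the mode that the polynomial growth bound $(\ref{gr})$ is designed to eliminate.
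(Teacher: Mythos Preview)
Your proof is correct and follows essentially the same approach as the paper: both expand $G_{1}\eta$ and $N_{1}\eta$ using $\tau_{0}=1$, combine them into the third-order ODE $-4\partial_{x}^{3}\eta+2\sqrt{3}\,\partial_{x}^{2}\eta=0$, use the growth bound to kill the $e^{\sqrt{3}x/2}$ mode, and then substitute the affine-in-$x$ remainder back into $G_{1}\eta=0$ to pin down the $y$-dependence. The only cosmetic difference is that the paper obtains the first identity by solving the first equation for $\partial_{y}\eta$ and inserting it into the second, whereas you expand and collect terms directly; the content is the same.
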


\begin{proof}
The equation $G_{1}\eta=L_{1}\phi$ is
\[
\left(  D_{x}^{2}+\frac{1}{\sqrt{3}}D_{x}+\frac{i}{\sqrt{3}}D_{y}\right)
\tau_{0}\cdot\eta=-L_{1}\phi.
\]
That is,
\[
\partial_{x}^{2}\eta+\frac{1}{\sqrt{3}}\left(  -\partial_{x}\eta\right)
+\frac{i}{\sqrt{3}}\left(  -\partial_{y}\eta\right)  =-L_{1}\phi.
\]
Inserting this identity into the equation
\[
\left(  -D_{x}-iD_{y}+D_{x}^{3}-\sqrt{3}iD_{x}D_{y}\right)  \tau_{0}\cdot
\eta=-M_{1}\phi,
\]
we get
\[
\sqrt{3}\partial_{x}^{2}\eta-\partial_{x}^{3}\eta-\sqrt{3}i\partial_{x}\left(
-\sqrt{3}i\left(  \partial_{x}^{2}\eta-\frac{1}{\sqrt{3}}\partial_{x}%
\eta+L_{1}\phi\right)  \right)  =-M_{1}\phi-\sqrt{3}L_{1}\phi.
\]
Hence%
\[
-4\partial_{x}^{3}\eta+2\sqrt{3}\partial_{x}^{2}\eta=-M_{1}\phi-\sqrt{3}%
L_{1}\phi+3\partial_{x}\left(  L_{1}\phi\right)  .
\]
If $L_{1}\phi=M_{1}\phi=0,$ then
\[
-4\partial_{x}^{3}\eta+2\sqrt{3}\partial_{x}^{2}\eta=0.
\]
The solutions of this equation are given by%
\[
c_{1}+c_{2}x+c_{3}e^{\frac{\sqrt{3}}{2}x},
\]
where $c_{1},c_{2},c_{3}$ are constants which may depend on $y.$ Due to the
growth estimate of $\eta,$ we find that
\[
\eta=c_{1}+c_{2}x.
\]
Inserting this into the equation $G_{1}\eta=0,$ we find that $\eta$ is a
linear combination of $1$ and $\tau_{1}.$
\end{proof}

\begin{lemma}
We have%
\[
\Theta_{1}\left(  x\right)  =4\sqrt{3},\text{ }\Theta_{1}\left(  y\right)
=0.
\]
and%
\[
\Theta_{1}\left(  x^{2}-y^{2}\right)  =8\sqrt{3}x,\text{ }\Theta_{1}\left(
xy\right)  =2\sqrt{3}i\bar{\tau}.
\]

\end{lemma}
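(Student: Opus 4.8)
The lemma is purely computational, so the plan is to evaluate $\Theta_1$ on each of the four test functions by unwinding the definitions from Lemma \ref{L4}. Recall that
\[
\Theta_1\phi=-M_1\phi-\sqrt3\,L_1\phi+3\partial_x\!\left(L_1\phi\right),
\]
with
\[
L_1\phi=\left(D_x^2+\tfrac{1}{\sqrt3}D_x+\tfrac{i}{\sqrt3}D_y\right)\phi\cdot\tau_1,\qquad M_1\phi=\left(-D_x-iD_y+D_x^3-\sqrt3\,iD_xD_y\right)\phi\cdot\tau_1,
\]
and $\tau_1=x+iy+\sqrt3$, so that $\partial_x\tau_1=1$, $\partial_y\tau_1=i$, and all higher derivatives of $\tau_1$ vanish. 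The first step is to expand the bilinear derivatives $D_x^mD_y^n\phi\cdot\tau_1$ appearing in $L_1$ and $M_1$ for $\phi\in\{x,\ y,\ x^2-y^2,\ xy\}$, using the Leibniz-type identities $D_x\phi\cdot\tau_1=\partial_x\phi\,\tau_1-\phi$, $D_y\phi\cdot\tau_1=\partial_y\phi\,\tau_1-i\phi$, $D_x^2\phi\cdot\tau_1=\partial_x^2\phi\,\tau_1-2\partial_x\phi$, and the analogous formulas for $D_x^3$ and $D_xD_y$. Since each $\phi$ is a polynomial of degree at most two and $\tau_1$ is affine, these expansions terminate immediately and the entire computation is algebraic.

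The second step is to assemble $L_1\phi$, $\partial_x(L_1\phi)$ and $M_1\phi$ from these pieces and form $\Theta_1\phi$. For $\phi=x$ this gives $L_1x=\tfrac{1}{\sqrt3}(x+iy-\sqrt3)$, $\partial_x(L_1x)=\tfrac{1}{\sqrt3}$, $M_1x=-(x+iy+2\sqrt3)$, hence $\Theta_1x=(x+iy+2\sqrt3)+(\sqrt3-x-iy)+\sqrt3=4\sqrt3$. For $\phi=y$ one finds $L_1y=\tfrac{i}{\sqrt3}(x+iy)+i$ and $M_1y=-i(x+iy)$, whose contributions cancel to give $\Theta_1y=0$. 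For $\phi=x^2-y^2$ one gets the convenient simplification $L_1(x^2-y^2)=\tfrac{2}{\sqrt3}(x^2+y^2+3)$ together with $M_1(x^2-y^2)=-2(x^2+y^2)-4\sqrt3\,x-6$, so that the quadratic terms cancel and $\Theta_1(x^2-y^2)=4\sqrt3\,x+\tfrac{12}{\sqrt3}x=8\sqrt3\,x$. Finally, for $\phi=xy$ the same bookkeeping yields $\Theta_1(xy)=2\sqrt3\,y+2\sqrt3\,ix+6i=2\sqrt3\,i\,\bar\tau$, where $\bar\tau=x-iy+\sqrt3$ denotes the complex conjugate of $\tau_1$. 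I would verify each of these four evaluations independently with \textit{Maple}, exactly as is done for the companion identities in Propositions \ref{p1} and \ref{P4}.

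The only point requiring care — the \emph{obstacle}, such as it is — is keeping the cancellations honest: the terms of the form $\tfrac{x+iy}{\sqrt3}$ or $\tfrac{x^2+y^2}{\sqrt3}$ coming from $-\sqrt3\,L_1\phi$ must cancel exactly against the corresponding terms in $-M_1\phi$, which in turn requires tracking the signs produced by the factors of $i$ and by the non-symmetric coefficient patterns in $L_1$ and $M_1$. There is no analytic content here — no estimates, no limits, no appeal to any decay hypothesis — so once the four evaluations above are carried out the lemma is proved, and I do not expect any genuine difficulty.
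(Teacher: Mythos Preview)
Your proposal is correct and follows essentially the same approach as the paper: both proofs proceed by direct computation of $L_1\phi$, $M_1\phi$ and $\partial_x(L_1\phi)$ for each of the four test functions and then assemble $\Theta_1\phi$. The paper only writes out the case $\phi=x^2-y^2$ in detail (obtaining $L_1(x^2-y^2)=\tfrac{2}{\sqrt3}\tau_2$ and $M_1(x^2-y^2)=-2\tau_2-4\sqrt3 x$, matching your values), whereas you spell out all four cases, but the method is identical.
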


\begin{proof}
This follows from direct computation. For instances, we compute
\begin{align*}
L_{1}\left(  x^{2}-y^{2}\right)   &  =\left(  D_{x}^{2}+\frac{1}{\sqrt{3}%
}D_{x}+\frac{1}{\sqrt{3}}iD_{y}\right)  \left(  x^{2}-y^{2}\right)  \cdot
\tau_{1}\\
&  =2\tau_{1}-4x+\frac{1}{\sqrt{3}}\left(  2x\tau_{1}-\left(  x^{2}%
-y^{2}\right)  \right)  +\frac{i}{\sqrt{3}}\left(  -2y\tau_{1}-i\left(
x^{2}-y^{2}\right)  \right) \\
&  =\frac{2}{\sqrt{3}}\tau_{2}.
\end{align*}%
\begin{align*}
M_{1}\left(  x^{2}-y^{2}\right)   &  =\left(  -D_{x}-iD_{y}+D_{x}^{3}-\sqrt
{3}iD_{x}D_{y}\right)  \left(  x^{2}-y^{2}\right)  \cdot\tau_{1}\\
&  =-\left(  2x\tau_{1}-\left(  x^{2}-y^{2}\right)  \right)  -i\left(
-2y\tau_{1}-\left(  x^{2}-y^{2}\right)  i\right) \\
&  +\left(  -3\right)  2-\sqrt{3}i\left(  -2xi-\left(  -2y\right)  \right) \\
&  =-2\tau_{2}-4\sqrt{3}x.
\end{align*}
It follows immediately that
\begin{align*}
\Theta_{1}\left(  x^{2}-y^{2}\right)   &  =2\tau_{2}+4\sqrt{3}x-\sqrt{3}%
\frac{2}{\sqrt{3}}\tau_{2}+3\frac{4x}{\sqrt{3}}\\
&  =8\sqrt{3}x.
\end{align*}

\end{proof}

\begin{lemma}
Define $\digamma\left(  \phi\right)  :=\left(  L_{1}\left(  \phi\right)
,M_{1}\left(  \phi\right)  \right)  ,$ $\mathcal{J}\left(  \phi\right)
:=\left(  G_{1}\left(  \phi\right)  ,N_{1}\left(  \phi\right)  \right)  .$
Then
\[
\digamma\left(  x\right)  =\mathcal{J}\left(  x\tau_{1}-\sqrt{3}z\right)  ,
\]%
\[
\digamma\left(  y\right)  =\mathcal{J}\left(  y\tau_{1}\right)  ,
\]
and
\begin{equation}
\digamma\left(  x^{2}-y^{2}\right)  =\mathcal{J}\left(  \rho_{1}\right)  ,
\label{xy}%
\end{equation}%
\[
\digamma\left(  xy\right)  =\mathcal{J}\left(  \rho_{2}\right)  ,
\]
where
\begin{align*}
\rho_{1}  &  =\frac{2}{3}x^{3}+\frac{4}{3}\sqrt{3}x^{2}-\frac{4\sqrt{3}}%
{3}yix-\frac{2i}{3}y^{3}+\frac{2}{3}\sqrt{3}y^{2}-14yi,\\
\rho_{2}  &  =\frac{1}{2}x^{2}y+\frac{5}{6}i\sqrt{3}x^{2}+\frac{1}{6}%
ix^{3}\allowbreak+\left(  \frac{y^{2}}{2}i+\frac{\sqrt{3}}{3}y\right)
x+\frac{y^{3}}{6}+\frac{\sqrt{3}}{6}y^{2}i+5y.
\end{align*}

\end{lemma}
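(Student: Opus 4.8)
The identity $\digamma(\phi)=\mathcal{J}(\psi)$ is, by the definitions of $\digamma$ and $\mathcal{J}$, nothing but the statement that the pair $(\phi,\psi)$ solves the linearized system $(\ref{s1})$, i.e.
\[
L_{1}\phi=G_{1}\psi,\qquad M_{1}\phi=N_{1}\psi .
\]
Since $\tau_{0}\equiv 1$ and $\tau_{1}=x+iy+\sqrt{3}$, so that $\partial_{x}\tau_{1}=1$, $\partial_{y}\tau_{1}=i$ and all higher derivatives of $\tau_{0}$ and $\tau_{1}$ vanish, each of $L_{1},M_{1},G_{1},N_{1}$ is an explicit linear differential operator with polynomial coefficients. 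Hence, once a candidate $\psi$ is written down, the two equations above reduce to polynomial identities in $x,y$ that can be checked by matching coefficients; for $\phi=x$ and $\phi=y$ this is a short computation, and for $\phi=x^{2}-y^{2}$ and $\phi=xy$ it is longer and is most safely done with \textit{Maple}.

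To actually produce the functions $\psi$ in the statement (rather than guess them), I would proceed as follows. By Lemma \ref{L4}, any pair $(\phi,\psi)$ solving $(\ref{s1})$ satisfies the reduced third order ODE
\[
-4\,\partial_{x}^{3}\psi+2\sqrt{3}\,\partial_{x}^{2}\psi=\Theta_{1}\phi ,
\]
and the preceding lemma records $\Theta_{1}$ on each of $x,\ y,\ x^{2}-y^{2},\ xy$ explicitly. For each of the four choices of $\phi$ I would integrate this ODE in $x$ for fixed $y$; the general solution is a particular polynomial solution plus homogeneous terms $c_{0}(y)+c_{1}(y)x+c_{2}(y)e^{\sqrt{3}x/2}$. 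One is free to drop the exponential term (we only need to exhibit one $\psi$), and the remaining $y$-dependent coefficients $c_{0}(y),c_{1}(y)$ are then pinned down by imposing the \emph{first} equation $L_{1}\phi=G_{1}\psi$, which forces them to be explicit polynomials in $y$. Carrying this out gives exactly the formulas for $\psi$ in the statement, up to the harmless addition of an element of $\ker\mathcal{J}=\mathrm{span}\{1,\tau_{1}\}$.

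It then remains to verify the \emph{second} equation $M_{1}\phi=N_{1}\psi$, and the point is that it is automatic. Indeed, using $\Theta_{1}\phi=-M_{1}\phi-\sqrt{3}L_{1}\phi+3\partial_{x}(L_{1}\phi)$ together with the identity $N_{1}\psi=4\partial_{x}^{3}\psi-2\sqrt{3}\partial_{x}^{2}\psi-\sqrt{3}G_{1}\psi+3\partial_{x}(G_{1}\psi)$ (which in turn follows from $\tau_{0}\equiv 1$), the reduced ODE and the first equation $L_{1}\phi=G_{1}\psi$ combine to give $M_{1}\phi-N_{1}\psi=0$. So no computation is needed beyond the first equation.

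The main obstacle is purely algebraic bookkeeping: for $\phi=x^{2}-y^{2}$ and $\phi=xy$ the function $\psi$ turns out to be a cubic polynomial in $x,y$ with complex coefficients, and tracking the $y$-dependent integration constants $c_{0}(y),c_{1}(y)$ through the constraint $L_{1}\phi=G_{1}\psi$ is where sign and coefficient errors are easy to make. I would therefore do the integration of the reduced ODE, the solution for $c_{0},c_{1}$, and the final consistency check symbolically, exactly in the spirit of the computer-assisted verifications used in Lemma \ref{h1} and Proposition \ref{p1}.
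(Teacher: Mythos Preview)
Your proposal is correct and follows essentially the same route as the paper: solve the reduced third-order ODE $-4\partial_{x}^{3}\psi+2\sqrt{3}\partial_{x}^{2}\psi=\Theta_{1}\phi$ for a polynomial particular solution with undetermined $y$-dependent integration constants, then fix those constants by imposing the first equation $L_{1}\phi=G_{1}\psi$. Your explicit remark that the second equation $M_{1}\phi=N_{1}\psi$ then follows automatically (by reversing the derivation in Lemma~\ref{L4}) is a useful clarification that the paper leaves implicit.
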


\begin{proof}
We only prove $\left(  \ref{xy}\right)  .$ The proof of other cases are
similar. Consider the equation
\[
-4\partial_{x}^{3}\eta+2\sqrt{3}\partial_{x}^{2}\eta=\Theta_{1}\left(
x^{2}-y^{2}\right)  =8\sqrt{3}x.
\]
This equation has a solution
\[
\rho_{1}=\frac{2}{3}x^{3}+\frac{4}{3}\sqrt{3}x^{2}+a\left(  y\right)
x+b\left(  y\right)  ,
\]
where $a\left(  y\right)  $ and $b\left(  y\right)  $ are functions to be
determined. Since
\[
\partial_{x}^{2}\rho_{1}+\frac{1}{\sqrt{3}}\left(  -\partial_{x}\rho
_{1}\right)  +\frac{i}{\sqrt{3}}\left(  -\partial_{y}\rho_{1}\right)
=-L_{1}\left(  x^{2}-y^{2}\right)  =-2\frac{x^{2}+y^{2}+3}{\sqrt{3}},
\]
we get%
\[
4x+\frac{8\sqrt{3}}{3}-\frac{1}{\sqrt{3}}\left(  2x^{2}+\frac{8}{3}\sqrt
{3}x+a\right)  -\frac{i}{\sqrt{3}}\left(  a^{\prime}x+b^{\prime}\right)
=-2\frac{x^{2}+y^{2}+3}{\sqrt{3}}.
\]
Hence%
\[
a\left(  y\right)  =-\frac{4\sqrt{3}}{3}yi,b\left(  y\right)  =-\frac{2i}%
{3}y^{3}+\frac{2}{3}\sqrt{3}y^{2}-14yi.
\]
From here we get $\rho_{1}$ immediately.
\end{proof}

\begin{lemma}
\label{L3}Suppose $\eta$ satisfies
\[
\left\{
\begin{array}
[c]{c}%
L_{2}\phi=G_{2}\eta,\\
M_{2}\phi=N_{2}\eta.
\end{array}
\right.
\]
Then
\[
\partial_{x}^{3}\eta\tau_{1}+\left(  \frac{\sqrt{3}}{2}\tau_{1}-3\right)
\partial_{x}^{2}\eta+\left(  \frac{3}{\tau_{1}}-\sqrt{3}\right)  \partial
_{x}\eta+\frac{\sqrt{3}}{\tau_{1}}\eta=\Theta_{2}\left(  \phi\right)  .
\]
Here
\[
\Theta_{2}\left(  \phi\right)  :=\frac{1}{4}\left(  \frac{6}{\tau_{1}}%
L_{2}\phi-3\partial_{x}\left(  L_{2}\phi\right)  +M_{2}\phi+\sqrt{3}L_{2}%
\phi\right)  .
\]

\end{lemma}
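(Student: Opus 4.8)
The statement to prove (Lemma \ref{L3}) is a direct algebraic identity in the same spirit as Lemma \ref{L4}: given that $\phi$ and $\eta$ are tied together by the linearized B\"acklund system \eqref{s3}, we must show that a certain third-order differential expression in $\eta$ (with coefficients involving $\tau_1$) equals the explicit operator $\Theta_2(\phi)$. The plan is to mimic the proof of Lemma \ref{L4} exactly. First I would write out the first equation $L_2\phi = G_2\eta$ in terms of ordinary derivatives: since $G_2\eta = -\left(D_x^2 - \tfrac{1}{\sqrt3}D_x + \tfrac{1}{\sqrt3}iD_y\right)\tau_1\cdot\eta$ and $\tau_1 = x+iy+\sqrt3$, expanding the bilinear derivatives gives a relation expressing $\partial_x^2\eta$ (plus lower-order terms in $\eta$, with a $\tau_1$ and a $1/\tau_1$-type weight coming from the $\tau_1$ factor and the structure of $G_2$) in terms of $L_2\phi$. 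Concretely one solves this for, say, $\partial_x^2\eta$ in the combination that appears, keeping careful track of the coefficient functions.

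Next I would take the second equation $M_2\phi = N_2\eta$, i.e. $\left(-D_x+iD_y+D_x^3-\sqrt3 iD_xD_y\right)\tau_1\cdot\eta = -M_2\phi$, expand all the bilinear derivatives acting on $\tau_1\cdot\eta$, and substitute the expression for the $\partial_x^2\eta$-combination (and its $x$-derivative, to handle the $D_x^3$ and $D_xD_y$ terms) obtained from the first step. The $D_y$ and $D_xD_y$ terms will, as in Lemma \ref{L4}, produce an $i\partial_x$ applied to the first relation, which is why a $\partial_x L_2\phi$ term shows up in $\Theta_2$. After collecting terms, the third-order derivative $\partial_x^3\eta$ survives with coefficient $\tau_1$, the $\partial_x^2\eta$ terms combine into the coefficient $\tfrac{\sqrt3}{2}\tau_1-3$, the $\partial_x\eta$ terms into $\tfrac{3}{\tau_1}-\sqrt3$, and the $\eta$ terms into $\tfrac{\sqrt3}{\tau_1}$; the right-hand side assembles into $\tfrac14\left(\tfrac{6}{\tau_1}L_2\phi - 3\partial_x(L_2\phi) + M_2\phi + \sqrt3 L_2\phi\right) = \Theta_2(\phi)$. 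This is a bookkeeping computation entirely parallel to the derivation of \eqref{s2}--\eqref{Eq1} earlier, where the same coefficient functions ($-\tfrac{\sqrt3}{2}\tau_2 - 6x$, etc.) already appeared, so I expect it to go through; in fact the identity is essentially \eqref{Eq1} rewritten with $\phi$ replaced by $\eta$ and $\tau_2$ by $\tau_1$, which is exactly what the B\"acklund symmetry between consecutive $\tau$-functions predicts.

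The only real obstacle is the algebraic care required: the presence of the $1/\tau_1$ weight (with its singularity at $(-\sqrt3,0)$) means one must verify the identity as an identity of rational-function coefficients rather than worry about the singular point, and the cross-differentiation step that introduces $\partial_x(L_2\phi)$ must be done without dropping any term coming from differentiating the coefficients. As the authors do repeatedly (Lemmas \ref{h1}, \ref{ki}, Propositions \ref{p1}, \ref{P4}), the cleanest route is to verify the final identity by direct symbolic computation in \textit{Maple}, treating $\phi$, $\eta$ and their derivatives as independent symbols subject only to the two relations in \eqref{s3}. So the proof reduces to: eliminate $\partial_y\eta$ and $\partial_x\partial_y\eta$ between the two relations of \eqref{s3}, substitute, and check that the stated coefficients on the left match the definition of $\Theta_2(\phi)$ on the right. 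I would therefore present the proof as: ``This follows by eliminating the $D_y$-derivatives of $\eta$ from the system \eqref{s3} exactly as in the derivation of \eqref{Eq1}; the resulting identity can be verified directly (using \textit{Maple}).''
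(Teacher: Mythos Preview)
Your proposal is correct and follows essentially the same approach as the paper: the paper writes the first equation of the system as an expression for $\partial_y\eta\,\tau_1$ in terms of $x$-derivatives of $\eta$ and $L_2\phi$, then substitutes this (and its $x$-derivative) into the second equation to eliminate the $y$-derivatives, arriving at the stated third-order ODE in $x$. Your description of eliminating $\partial_y\eta$ and $\partial_x\partial_y\eta$ between the two relations of \eqref{s3} is exactly this, and your observation that the computation parallels the derivation of \eqref{Eq1} is on point.
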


\begin{proof}
Explicitly, $\eta$ satisfies%
\begin{equation}
\left\{
\begin{array}
[c]{l}%
\left(  D_{x}^{2}-\frac{1}{\sqrt{3}}D_{x}+\frac{1}{\sqrt{3}}iD_{y}\right)
\tau_{1}\cdot\eta=-L_{2}\phi,\\
\left(  -D_{x}+iD_{y}+D_{x}^{3}-\sqrt{3}iD_{x}D_{y}\right)  \tau_{1}\cdot
\eta=-M_{2}\phi.
\end{array}
\right.  \label{y2}%
\end{equation}
Let us write the first equation in this system as%
\[
\partial_{y}\eta\tau_{1}=-i\left[  \partial_{x}\eta\tau_{1}+\sqrt{3}\left(
\partial_{x}^{2}\eta\tau_{1}-2\partial_{x}\eta\right)  -2\eta\right]
-\sqrt{3}iL_{2}\phi.
\]
Inserting this identity into the right hand side of the second equation the
system $\left(  \ref{y2}\right)  $, we get
\[
\partial_{x}^{3}\eta\tau_{1}+\left(  \frac{\sqrt{3}}{2}\tau_{1}-3\right)
\partial_{x}^{2}\eta+\left(  \frac{3}{\tau_{1}}-\sqrt{3}\right)  \partial
_{x}\eta+\frac{\sqrt{3}}{\tau_{1}}\eta=\Theta_{2}.
\]

\end{proof}

Note that $\eta=\tau_{2}$ satisfies the homogeneous equation%
\begin{equation}
\partial_{x}^{3}\eta\tau_{1}+\left(  \frac{\sqrt{3}}{2}\tau_{1}-3\right)
\partial_{x}^{2}\eta+\left(  \frac{3}{\tau_{1}}-\sqrt{3}\right)  \partial
_{x}\eta+\frac{\sqrt{3}}{\tau_{1}}\eta=0. \label{yita}%
\end{equation}
Letting $\eta=\tau_{2}\kappa$ and $p=\kappa^{\prime},$ equation $\left(
\ref{yita}\right)  $ becomes
\begin{equation}
\tau_{1}\tau_{2}p^{\prime\prime}+\left(  6x\tau_{1}+\left(  \frac{\sqrt{3}}%
{2}\tau_{1}-3\right)  \tau_{2}\right)  p^{\prime}+\left(  6\tau_{1}+2x\left(
\sqrt{3}\tau_{1}-6\right)  +\left(  \frac{3}{\tau_{1}}-\sqrt{3}\right)
\tau_{2}\right)  p=0. \label{g4}%
\end{equation}

\begin{lemma}
\label{homo}The equation $\left(  \ref{g4}\right)  $ has two solutions given
by
\[
p_{1}=\frac{\left(  x+yi\right)  ^{2}-3}{\tau_{2}^{2}}%
\]
and%
\[
p_{2}:=\left(  8\sqrt{3}x-4i\sqrt{3}y+3x^{2}+3y^{2}+21\right)  \allowbreak
e^{-\frac{1}{2}\sqrt{3}x}\frac{\tau_{1}}{\tau_{2}^{2}}.
\]
In particular, if $\eta$ satisfies $\left(  \ref{gr}\right)  $ and
\[
G_{2}\eta=N_{2}\eta=0,
\]
Then $\eta=c_{1}z+c_{2}\tau_{2}.$
\end{lemma}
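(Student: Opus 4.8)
The plan is to prove the two assertions of the lemma in turn. For the first, I would substitute $p_1$ and $p_2$ directly into $(\ref{g4})$ and check the identity; as with the parallel computations in this section, this is routine but lengthy and is best done with \textit{Maple}. Since $(\ref{g4})$ is a linear second order ODE in $x$ whose coefficients are smooth in $x$ for each fixed $y\neq 0$ — note that $\tau_1=x+yi+\sqrt3$ never vanishes when $y\neq 0$ — its solution space is two-dimensional for such $y$; and since $p_1$ is a rational function of $(x,y)$ while $p_2$ carries the factor $e^{-\frac{\sqrt3}{2}x}$, they are linearly independent, hence a fundamental system. One may confirm this by Abel's formula, which gives their Wronskian as a nonzero constant multiple of $\tau_1^{3}\tau_2^{-3}e^{-\frac{\sqrt3}{2}x}$.

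For the rigidity statement, suppose $\eta$ satisfies $(\ref{gr})$ and $G_2\eta=N_2\eta=0$. Taking $\phi\equiv 0$ we have $L_2\phi=M_2\phi=0=G_2\eta=N_2\eta$ and $\Theta_2(\phi)=0$, so Lemma \ref{L3} gives that $\eta$ solves the homogeneous equation $(\ref{yita})$. As $\tau_2$ is one solution of this third order ODE, the substitution already made in the text ($\eta=\tau_2\kappa$, $p=\kappa'$, which turns $(\ref{yita})$ into $(\ref{g4})$) shows that for each fixed $y\neq 0$ the function $\kappa'$ lies in the linear span of $p_1$ and $p_2$, so that
\[
\eta=A(y)\,\tau_2\,\partial_x^{-1}p_1+B(y)\,\tau_2\,\partial_x^{-1}p_2+C(y)\,\tau_2
\]
for suitable functions $A,B,C$ of $y$ (smooth, since $\eta$ is). A one-line computation gives $\partial_x\!\bigl(z/\tau_2\bigr)=-p_1$, so $\tau_2\,\partial_x^{-1}p_1$ equals $-z$ modulo a $y$-dependent multiple of $\tau_2$, which I absorb into the last term to get $\eta=a(y)z+b(y)\tau_2+B(y)\,\tau_2\,\partial_x^{-1}p_2$. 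By the explicit form of $p_2$ — in particular the factor $e^{-\frac{\sqrt3}{2}x}$, which no choice of antiderivative removes — the term $\tau_2\,\partial_x^{-1}p_2$ grows like $|x|\,e^{\frac{\sqrt3}{2}|x|}$ as $x\to-\infty$, so the polynomial bound $(\ref{gr})$ forces $B\equiv 0$ and hence $\eta=a(y)z+b(y)\tau_2$.

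It remains to see that $a$ and $b$ are constants. I would substitute $\eta=a(y)z+b(y)\tau_2$ into $G_2\eta=0$: one checks $G_2z=G_2\tau_2=0$ (the latter is simply the first line of the B\"acklund system $(\ref{b2})$, the former a short direct computation), and, since $G_2$ involves $D_y$, the only terms not already cancelled are those in which $D_y$ differentiates the coefficients, which collect to
\[
G_2\eta=\frac{i}{\sqrt3}\,\tau_1\bigl(a'(y)\,z+b'(y)\,\tau_2\bigr).
\]
For $y\neq 0$, $\tau_1\neq 0$ and $z,\tau_2$ are linearly independent as functions of $x$, so $a'(y)=b'(y)=0$; by continuity $a,b$ are constant on $\mathbb{R}$, giving $\eta=c_1z+c_2\tau_2$. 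The one point that needs care — just as in Lemmas \ref{ro} and \ref{ro1} — is the growth bookkeeping: the whole argument must be run for $y\neq 0$, where $(\ref{g4})$ is nonsingular and $\tau_1$ nonvanishing, and then extended to $y=0$ (and to the apparent singularity $(x,y)=(-\sqrt3,0)$ of $(\ref{g4})$) by continuity of $\eta$; and one must make sure the exponentially large solution $\tau_2\,\partial_x^{-1}p_2$ of $(\ref{yita})$ cannot be hidden by the polynomial pieces $z$ and $\tau_2$. The identity verifications for $p_1,p_2$ and for $G_2\eta$ are heavy but mechanical.
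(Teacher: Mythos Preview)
Your proof is correct and follows the approach the paper sets up but does not spell out: the paper states Lemma~\ref{homo} without proof, relying on the surrounding machinery (Lemma~\ref{L3} and the reduction $(\ref{yita})\to(\ref{g4})$) together with the remark $\partial_x^{-1}p_1=-z/\tau_2$. Your argument makes this explicit --- invoking Lemma~\ref{L3} with $\phi\equiv 0$, using the fundamental system $\{p_1,p_2\}$ and the growth bound $(\ref{gr})$ to eliminate the exponential piece, and then reading off $a'=b'=0$ from the residual $G_2\eta=\frac{i}{\sqrt3}\tau_1\bigl(a'z+b'\tau_2\bigr)$ --- is exactly the intended route, and the check $G_2z=0$ is a useful detail the paper omits.
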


Note that $\partial_{x}^{-1}p_{1}=-\frac{z}{\tau_{2}}.$ Hence we get a
solution $\eta=-z$ for the equation $\left(  \ref{yita}\right)  .$ This
solution is corresponding to the translation of $\tau_{2}$ along the $x$ and
$y$ axes.

\begin{lemma}
\label{th}We have
\[
M_{2}\left(  y\tau_{1}\right)  =-2\sqrt{3}xy+3x^{2}y+3xi+2\sqrt{3}y^{2}%
i-x^{3}i+3xy^{2}i+9y-y^{3}-6\sqrt{3}i,
\]%
\[
M_{2}\left(  z^{2}\right)  =6x^{2}yi-2\sqrt{3}x^{2}-2\sqrt{3}y^{2}%
+2x^{3}-6xy^{2}+12yi-2y^{3}i+6\sqrt{3},
\]%
\begin{align*}
L_{2}\rho_{1}  &  =24x+12\sqrt{3}xyi-\frac{4\sqrt{3}}{9}x^{3}yi+4xy^{2}%
+\frac{4\sqrt{3}}{3}x^{2}y^{2}-\frac{8}{3}y^{3}i+22\sqrt{3}+\frac{4\sqrt{3}%
}{3}x^{2}\\
&  +\frac{4\sqrt{3}}{9}xy^{3}i+\frac{4\sqrt{3}}{3}y^{2}-28yi+\frac{2\sqrt{3}%
}{9}x^{4}-\frac{4}{3}x^{3}+\frac{2\sqrt{3}}{9}y^{4}.
\end{align*}%
\begin{align*}
M_{2}\rho_{1}  &  =-42+\frac{4\sqrt{3}}{3}y^{3}i-\frac{4\sqrt{3}}{3}%
x^{3}+4\sqrt{3}x^{2}yi-\frac{4}{3}xy^{3}i+\frac{4}{3}yx^{3}i\\
&  -4xyi+4\sqrt{3}xy^{2}-8\sqrt{3}yi-\frac{2}{3}y^{4}-\frac{2}{3}x^{4}%
-4x^{2}y^{2}+16\sqrt{3}x+16y^{2}-24x^{2},
\end{align*}%
\begin{align*}
L_{2}\rho_{2}  &  =13y+10\sqrt{3}i-\frac{2\sqrt{3}}{9}x^{3}y-4\sqrt{3}%
xy-x^{2}y+xy^{2}i+\frac{5}{3}y^{3}+\frac{2\sqrt{3}}{9}x^{4}i\\
&  +\frac{2\sqrt{3}}{9}y^{4}i+\frac{2\sqrt{3}}{9}xy^{3}+\frac{4\sqrt{3}}%
{3}y^{2}i-\frac{1}{3}x^{3}i+9xi-\frac{2\sqrt{3}}{3}x^{2}i,
\end{align*}%
\begin{align*}
M_{2}\rho_{2}  &  =7y^{2}i-3\sqrt{3}x^{2}y-2xy+\frac{2}{3}x^{3}y+5\sqrt
{3}y-9x^{2}i+\sqrt{3}xy^{2}i\\
&  -15i-\frac{2}{3}y^{4}i-\frac{2}{3}x^{4}i-\frac{2}{3}xy^{3}+\sqrt{3}%
xi-\frac{\sqrt{3}}{3}ix^{3}-\frac{\sqrt{3}}{3}y^{3}.
\end{align*}

\end{lemma}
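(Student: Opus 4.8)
The plan is to reduce the six identities to elementary polynomial algebra by first writing the bilinear operators $L_2$ and $M_2$ as ordinary variable-coefficient differential operators. Since $\tau_2=x^2+y^2+3$ is quadratic, all of its derivatives of order $\geq 3$ vanish and one has $\partial_x\tau_2=2x$, $\partial_y\tau_2=2y$, $\partial_x^2\tau_2=\partial_y^2\tau_2=2$, $\partial_x\partial_y\tau_2=0$; expanding each Hirota derivative $D_x^mD_y^n\phi\cdot\tau_2$ by the Leibniz rule therefore gives
\begin{align*}
L_{2}\phi &= \tau_{2}\partial_{x}^{2}\phi-4x\partial_{x}\phi+2\phi-\tfrac{1}{\sqrt{3}}\bigl(\tau_{2}\partial_{x}\phi-2x\phi\bigr)+\tfrac{i}{\sqrt{3}}\bigl(\tau_{2}\partial_{y}\phi-2y\phi\bigr),\\
M_{2}\phi &= \tau_{2}\partial_{x}^{3}\phi-6x\partial_{x}^{2}\phi+6\partial_{x}\phi-\bigl(\tau_{2}\partial_{x}\phi-2x\phi\bigr)+i\bigl(\tau_{2}\partial_{y}\phi-2y\phi\bigr)-\sqrt{3}\,i\bigl(\tau_{2}\partial_{x}\partial_{y}\phi-2y\partial_{x}\phi-2x\partial_{y}\phi\bigr).
\end{align*}

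Next I would substitute the six test functions one at a time. Each of $y\tau_1$, $z^2$, $\rho_1$, $\rho_2$ is a polynomial in $x,y$ of degree at most three with coefficients in $\mathbb{Q}(\sqrt{3},i)$, so every derivative occurring above is again a polynomial of degree $\leq 2$, and $\tau_2$ times such a derivative has degree $\leq 4$; hence $L_2\phi$ and $M_2\phi$ are polynomials of degree $\leq 4$, and collecting like monomials produces the stated closed forms. For example, with $\phi=y\tau_1=xy+iy^2+\sqrt{3}\,y$ one has $\partial_x\phi=y$, $\partial_x^2\phi=\partial_x^3\phi=0$, $\partial_y\phi=x+2iy+\sqrt{3}$, $\partial_x\partial_y\phi=1$, and inserting these into the displayed formula for $M_2$ and simplifying recovers the asserted expression for $M_2(y\tau_1)$; the remaining five identities follow in exactly the same way, $z^2=x^2-y^2+2ixy$ being handled via $\partial_x(z^2)=2z$, $\partial_y(z^2)=2iz$, and $\rho_1,\rho_2$ via their (given) explicit cubics.

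The step is purely mechanical, so the only real difficulty is bookkeeping: each expansion involves on the order of a dozen monomials whose $\sqrt{3}$- and $i$-laden coefficients must be combined without error, and for the degree-three inputs $\rho_1,\rho_2$ the intermediate expressions become long. For this reason I would perform the expansions, and independently re-check them, with \textit{Maple} or \textit{Mathematica}, just as in the proofs of Propositions \ref{p1} and \ref{P4}. A useful internal consistency test is that the outputs should be compatible with the system (\ref{s3}): there should exist an $\eta$ with $G_2\eta=L_2\phi$ and $N_2\eta=M_2\phi$, and by the analogue of Proposition \ref{P} this $\eta$ must solve the linearized bilinear equation (\ref{yita2}) at $\tau_2$, so any algebra slip would be detected by the failure of this compatibility. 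There is no conceptual obstacle here; the real content sits in the earlier structural results (Propositions \ref{P1} and \ref{P}), which single out $y\tau_1$, $z^2$, $\rho_1$, $\rho_2$ as precisely the functions whose images need to be computed.
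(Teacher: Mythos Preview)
Your proposal is correct and is essentially the same approach as the paper's own proof, which consists of the two words ``Direct computation.'' You have simply made explicit what that computation entails---expanding the Hirota operators against $\tau_2$ via the Leibniz rule, substituting the polynomial inputs, and collecting terms (optionally with \textit{Maple})---which is exactly what is intended.
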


\begin{proof}
Direct computation.
\end{proof}

We are now in a position to prove Theorem \ref{main}.

\begin{proof}
[Proof of Theorem \ref{main}]Let $\phi$ be a solution of $\left(
\ref{FI}\right)  $ satisfying the assumption of Theorem \ref{main}. Then using
Lemma \ref{core}, we can find $\eta_{2}$, a solution of $\left(
\ref{yita2}\right)  ,$ satisfying $\left(  \ref{es1}\right)  .$ In view of
Lemma \ref{homo}, if $G_{2}\eta_{2}=N_{2}\eta_{2}=0.$ Then $\eta_{2}%
=c_{1}z+c_{2}\tau_{2}.$ Therefore, to prove the theorem, from now on, we can
assume $G_{2}\eta_{2}\neq0$ or $N_{2}\eta_{2}\neq0.$

By Proposition \ref{P}, there exists a solution $\eta_{1}$ of the equation
\[
\left(  D_{x}^{2}-D_{x}^{4}+D_{y}^{2}\right)  \eta_{1}\cdot\tau_{1}=0,
\]
satisfying the estimate $\left(  \ref{es1}\right)  .$

Case 1. $G_{1}\eta_{1}=N_{1}\eta_{1}=0.$

In this case, by Lemma \ref{L4}, $\eta_{1}=a_{1}+a_{2}\tau_{1}.$ Accordingly,
\[
\eta_{2}=d_{1}\partial_{x}\tau_{2}+d_{2}\partial_{y}\tau_{2}+d_{3}\tau_{2},
\]
for some constants $d_{1},d_{2},d_{3}.$

Case 2. $G_{1}\eta_{1}\neq0$ or $N_{1}\eta_{1}\neq0.$

In this case, by Proposition \ref{P1}, there exists a solution $\eta_{0}$ of
\begin{equation}
\left(  D_{x}^{2}-D_{x}^{4}+D_{y}^{2}\right)  \eta_{0}\cdot\tau_{0}=0,
\label{eq2}%
\end{equation}
satisfying
\begin{equation}
\left\vert \eta_{0}\right\vert +\left\vert \partial_{x}\eta_{0}\right\vert
+\left\vert \partial_{y}\eta_{0}\right\vert \leq C\left(  1+r\right)
^{\frac{5}{2}}. \label{eq1}%
\end{equation}
From $\left(  \ref{eq2}\right)  $ and $\left(  \ref{eq1}\right)  ,$ we infer
that
\[
\partial_{x}^{2}\eta_{0}+\partial_{y}^{2}\eta_{0}=0.
\]
Therefore, for some constants $c_{1},...,c_{5},$
\[
\eta_{0}=c_{1}+c_{2}x+c_{3}y+c_{4}\left(  x^{2}-y^{2}\right)  +c_{5}xy.
\]
We claim that $c_{2}=c_{3}=c_{4}=c_{5}=0.$ Indeed, if $c_{4}$ or $c_{5}$ is
nonzero, then using Lemma \ref{th}, we find that $L_{2}\left(  \eta
_{1}\right)  $ and $M_{2}\left(  \eta_{1}\right)  $ will grow like $x^{4}.$
This contradicts with the asymptotic behavior $\left(  \ref{es1}\right)  $ of
$\eta_{2}.$ On the other hand, if $c_{2}$ or $c_{3}$ is nonzero, then still by
Lemma \ref{th}, $M_{2}\left(  \eta_{1}\right)  $ will grow like $x^{3}$ or
$x^{2}y.$ This also contradicts with the asymptotic behavior of $\eta_{2}.$
Hence $\eta_{0}$ is a constant. Then from the previous discussion, we deduce
that
\[
\eta_{2}=d_{1}\partial_{x}\tau_{2}+d_{2}\partial_{y}\tau_{2}+d_{3}\tau_{2},
\]
for some constants $d_{1},d_{2},d_{3}.$ The proof is thus finished.
\end{proof}

\section{Nondegeneracy of a family of $y$-periodic solutions\label{period}}

Following similar arguments as in the previous sections, we would like to show
the nondegeneracy of a family of $y$-periodic solutions naturally associated
to the lump solution.

Let $k\in\left(  0,1\right)  $ and $p=\sqrt{1-k^{2}}i.$ In some cases, we also
denote $\sqrt{1-k^{2}}$ by $b$. Define
\[
\tilde{\iota}=e^{\frac{k}{2}\left(  x-py-t\right)  }+e^{-\frac{k}{2}\left(
x-py-t\right)  }.
\]
Note that actually $\tilde{\iota}$ is depending on $k$ and $p.$

\begin{lemma}
$\tilde{\iota}$ satisfies the bilinear KP-I equation.
\end{lemma}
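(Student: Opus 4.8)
The plan is to verify directly that $\left(D_{x}D_{t}+D_{x}^{4}-D_{y}^{2}\right)\tilde{\iota}\cdot\tilde{\iota}=0$, exploiting the fact that $\tilde{\iota}$ is a sum of two pure exponentials with linear phases. Write $\tilde{\iota}=e^{\theta_{+}}+e^{\theta_{-}}$ with $\theta_{\pm}=\pm\frac{k}{2}\left(x-py-t\right)$, so in particular $\theta_{+}+\theta_{-}=0$. The basic tool is the elementary exponential identity for Hirota derivatives: for any polynomial $P$ and linear phases $\theta_{j}=\alpha_{j}x+\beta_{j}y+\gamma_{j}t$,
\[
P\left(D_{x},D_{y},D_{t}\right)\,e^{\theta_{1}}\cdot e^{\theta_{2}}=P\left(\alpha_{1}-\alpha_{2},\,\beta_{1}-\beta_{2},\,\gamma_{1}-\gamma_{2}\right)\,e^{\theta_{1}+\theta_{2}}.
\]

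First I would expand the bilinear expression into its four terms, with $P\left(\alpha,\beta,\gamma\right)=\alpha\gamma+\alpha^{4}-\beta^{2}$. The two diagonal contributions $e^{\theta_{+}}\cdot e^{\theta_{+}}$ and $e^{\theta_{-}}\cdot e^{\theta_{-}}$ vanish because $P\left(0,0,0\right)=0$. For the two cross terms the difference of the phase gradients is $\pm\left(k,-kp,-k\right)$; since every monomial of $P$ has even total degree, $P$ is invariant under simultaneous sign change of all of its arguments, and since $e^{\theta_{+}+\theta_{-}}=1$, both cross terms equal $P\left(k,-kp,-k\right)$. Hence $\left(D_{x}D_{t}+D_{x}^{4}-D_{y}^{2}\right)\tilde{\iota}\cdot\tilde{\iota}=2P\left(k,-kp,-k\right)$, and it remains only to check that this quantity vanishes.

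This last step is the sole computation, and it is short: $P\left(k,-kp,-k\right)=k\cdot\left(-k\right)+k^{4}-\left(kp\right)^{2}=-k^{2}+k^{4}-k^{2}p^{2}$. Substituting $p^{2}=\left(\sqrt{1-k^{2}}\,i\right)^{2}=-\left(1-k^{2}\right)$ gives $-k^{2}+k^{4}+k^{2}\left(1-k^{2}\right)=0$, so $\tilde{\iota}$ solves the bilinear KP-I equation $\left(\ref{bi}\right)$.

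There is essentially no obstacle here. The only point requiring care is bookkeeping the phases correctly: one must keep in mind that $p$ is purely imaginary, so that $-k^{2}p^{2}$ is a positive multiple of $1-k^{2}$ which precisely cancels the remaining two terms. This cancellation is exactly the dispersion relation behind the $y$-period $t_{k}=\frac{2\pi}{k\sqrt{1-k^{2}}}$ quoted in the introduction. Alternatively, one could simply substitute the expression into \emph{Maple}, as is done elsewhere in the paper, but the exponential identity makes the verification transparent.
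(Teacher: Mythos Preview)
Your proof is correct and is precisely the ``direct computation'' the paper alludes to but omits; the paper's own proof reads in its entirety ``This follows from direct computation. We omit the details.'' You have simply supplied those details via the standard exponential identity for Hirota derivatives.
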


\begin{proof}
This follows from direct computation. We omit the details.
\end{proof}

Remark that by choosing $k=1,$ we get the corresponding function
\[
2\partial_{x}^{2}\ln\tilde{\iota}=\frac{1}{2\cosh^{2}\frac{x-t}{2}}.
\]
This is a solution for the classical KdV equation.

\subsection{The linearized B\"{a}cklund transformation between $\iota_{0}$ and
$\iota$}

Next we consider the B\"{a}cklund transformation between $\iota_{0}=1$ and
$\tilde{\iota}.$ Since many ideas are similar as in the previous sections, in
certain places, we will omit some of the details.

Define the parameters
\[
\lambda=\frac{k^{2}}{4},\mu=\frac{pi}{\sqrt{3}},
\]
where as usual, the notation $i$ represents the imaginary unit.

\begin{lemma}
The B\"{a}cklund transformation between $\iota_{0}$ and $\tilde{\iota}$ is
given by
\begin{equation}
\left\{
\begin{array}
[c]{l}%
\left(  D_{x}^{2}+\mu D_{x}+\frac{1}{\sqrt{3}}iD_{y}\right)  \iota_{0}%
\cdot\tilde{\iota}-\lambda\iota_{0}\tilde{\iota}=0,\\
\left(  D_{t}+3\lambda D_{x}-\sqrt{3}\mu iD_{y}+D_{x}^{3}-\sqrt{3}iD_{x}%
D_{y}-\frac{3k^{2}\mu}{4}\right)  \iota_{0}\cdot\tilde{\iota}=0.
\end{array}
\right.  \label{B1}%
\end{equation}

\end{lemma}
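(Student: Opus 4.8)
The plan is to verify the claimed B\"{a}cklund transformation by substituting the proposed bilinear system $(\ref{B1})$ into the general bilinear operator identity $(\ref{back})$ and checking that the resulting right-hand side vanishes identically. Recall that $(\ref{back})$ states that the difference of the bilinear KP-I expressions $\left(D_{x}D_{t}+D_{x}^{4}-D_{y}^{2}\right)f\cdot f$ (weighted by $gg$) and the same in $g$ (weighted by $ff$) equals a sum of three terms, each of which is a bilinear derivative applied to a product containing the two factors $\left(D_{x}^{2}+\mu D_{x}+\tfrac{1}{\sqrt{3}}iD_{y}-\lambda\right)f\cdot g$ and $\left(D_{t}+3\lambda D_{x}-\sqrt{3}i\mu D_{y}+D_{x}^{3}-\sqrt{3}iD_{x}D_{y}+\nu\right)f\cdot g$. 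The key observation is that with $f=\iota_{0}$ and $g=\tilde{\iota}$ we already know from the preceding two lemmas that both $\iota_{0}$ and $\tilde{\iota}$ solve the bilinear KP-I equation, so the entire left-hand side of $(\ref{back})$ is zero. Hence the right-hand side must vanish, and it suffices to arrange the parameters so that each of the three constituent bilinear factors is annihilated.

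First I would fix the parameter choices announced in the excerpt: $\lambda=\tfrac{k^{2}}{4}$ and $\mu=\tfrac{pi}{\sqrt{3}}$, and then compute $\left(D_{x}^{2}+\mu D_{x}+\tfrac{1}{\sqrt{3}}iD_{y}-\lambda\right)\iota_{0}\cdot\tilde{\iota}$ directly. Since $\iota_{0}=1$ is constant, each bilinear derivative $D_{x}^{m}D_{y}^{n}\iota_{0}\cdot\tilde{\iota}$ reduces (up to sign) to the ordinary derivative $\partial_{x}^{m}\partial_{y}^{n}\tilde{\iota}$, and because $\tilde{\iota}$ is a sum of two exponentials $e^{\pm\frac{k}{2}(x-py-t)}$ one simply gets a quadratic polynomial in $\tfrac{k}{2}$ and $\mp\tfrac{kp}{2}$ times $\tilde{\iota}$. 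The condition that this vanish pins down $\lambda$ in terms of $k$ and $\mu$ in terms of $p$, and one checks these match $\tfrac{k^{2}}{4}$ and $\tfrac{pi}{\sqrt{3}}$ after using $p^{2}=-(1-k^{2})$; this is the first equation of $(\ref{B1})$. Then I would carry out the analogous (longer but still mechanical) computation for the time-evolution factor, now including the extra constant $\nu$: evaluating $\left(D_{t}+3\lambda D_{x}-\sqrt{3}i\mu D_{y}+D_{x}^{3}-\sqrt{3}iD_{x}D_{y}+\nu\right)\iota_{0}\cdot\tilde{\iota}$ again produces a constant multiple of $\tilde{\iota}$, and requiring that constant to be zero determines $\nu=-\tfrac{3k^{2}\mu}{4}$, which is exactly the constant appearing in the second equation of $(\ref{B1})$ (note $D_{t}\tilde{\iota}$ contributes via the traveling-wave dependence on $x-py-t$).

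The main obstacle — though it is arithmetic rather than conceptual — is keeping the bookkeeping of the many $\sqrt{3}$, $i$, and $p$ factors straight while simplifying the time-evolution equation, and making sure the cross term $-\sqrt{3}iD_{x}D_{y}$ is handled correctly on a product where one factor is constant. As the excerpt itself suggests for the closely parallel computations in $(\ref{b1})$ and $(\ref{b2})$, this verification is best carried out (or at least confirmed) with a computer algebra system such as \emph{Maple}. Once both factors are shown to vanish for these parameter values, the right-hand side of $(\ref{back})$ is identically zero, which is consistent with (and in fact re-derives) the fact that $\tilde{\iota}$ solves bilinear KP-I, and more to the point establishes that the pair $(\iota_{0},\tilde{\iota})$ is linked by the B\"{a}cklund system $(\ref{B1})$, completing the proof.
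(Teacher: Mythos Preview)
Your proposal is essentially correct and follows the paper's own approach: direct computation of both bilinear equations on the explicit exponential sum $\tilde{\iota}$, using that $\iota_{0}=1$ reduces the Hirota derivatives to ordinary derivatives. The paper carries out the first equation exactly as you describe and then says the second ``can be verified directly,'' which is what you do as well.

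One small point on the framing: your opening paragraph invokes the identity $(\ref{back})$ as if the vanishing of its left-hand side (both $\iota_{0}$ and $\tilde{\iota}$ solving bilinear KP-I) would force the individual B\"{a}cklund factors on the right to vanish. It does not --- the right-hand side is a sum of bilinear derivatives of those factors, and its vanishing is only a necessary, not sufficient, condition for the factors to be zero. Fortunately you do not actually rely on that implication; your real argument is the direct computation, which is sound. The paper's ``alternative'' use of $(\ref{back})$ runs in the opposite direction: once the \emph{first} equation is verified directly, the second and third terms on the right of $(\ref{back})$ drop out, leaving $D_{x}\bigl[(\text{second factor})\bigr]\cdot(\iota_{0}\tilde{\iota})=0$, from which the second equation follows (after fixing $\nu$ by an asymptotic or constant-matching argument). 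Either route works here.
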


\begin{proof}
Let $\eta_{1}=k\left(  x-py-t\right)  .$ We have
\begin{align*}
&  \left(  D_{x}^{2}+\mu D_{x}+\frac{1}{\sqrt{3}}iD_{y}\right)  \iota_{0}%
\cdot\tilde{\iota}-\lambda\iota_{0}\tilde{\iota}\\
&  =\partial_{x}^{2}\tilde{\iota}+\mu\left(  -\partial_{x}\tilde{\iota
}\right)  +\frac{1}{\sqrt{3}}i\left(  -\partial_{y}\tilde{\iota}\right)
-\lambda\tilde{\iota}\\
&  =e^{-\frac{\eta_{1}}{2}}\left(  \frac{k^{2}}{4}+\frac{\mu k}{2}-\frac
{pki}{2\sqrt{3}}-\lambda\right) \\
&  +e^{\frac{\eta_{1}}{2}}\left(  \frac{k^{2}}{4}-\frac{\mu k}{2}+\frac
{pki}{2\sqrt{3}}-\lambda\right)  .
\end{align*}
Since
\[
\lambda=\frac{k^{2}}{4},\mu=\frac{pi}{\sqrt{3}},
\]
we get
\begin{align*}
\frac{k^{2}}{4}+\frac{\mu k}{2}-\frac{pki}{2\sqrt{3}}-\lambda &  =0,\\
\frac{k^{2}}{4}-\frac{\mu k}{2}+\frac{pki}{2\sqrt{3}}-\lambda &  =0.
\end{align*}
Hence
\[
\left(  D_{x}^{2}+\mu D_{x}+\frac{1}{\sqrt{3}}iD_{y}\right)  \iota_{0}%
\cdot\tilde{\iota}-\lambda\iota_{0}\tilde{\iota}=0.
\]
The second equation
\[
\left(  D_{t}+3\lambda D_{x}-\sqrt{3}\mu iD_{y}+D_{x}^{3}-\sqrt{3}iD_{x}%
D_{y}-\frac{3k^{2}\mu}{4}\right)  \iota_{0}\cdot\tilde{\iota}=0
\]
can be verifies directly. Alternatively, one can use the bilinear identity
$\left(  \ref{back}\right)  $ to prove this.
\end{proof}

Since $\tilde{\iota}$ is of travelling wave type, we define a function $\iota$
through $\tilde{\iota}\left(  t,x,y\right)  =\iota\left(  x-t,y\right)  .$
Similarly as before, we need to investigate the linearized Backlund
transformation between $\iota_{0}$ and $\iota.$ That is%
\begin{equation}
\left\{
\begin{array}
[c]{l}%
\left(  D_{x}^{2}+\mu D_{x}+\frac{1}{\sqrt{3}}iD_{y}\right)  \phi\cdot
\iota-\lambda\phi\iota=\mathcal{G}_{1}\eta,\\
\left(  -D_{x}+3\lambda D_{x}-\sqrt{3}\mu iD_{y}+D_{x}^{3}-\sqrt{3}iD_{x}%
D_{y}-\frac{3k^{2}\mu}{4}\right)  \phi\cdot\iota=\mathcal{N}_{1}\eta.
\end{array}
\right.  \label{HB1}%
\end{equation}
Here
\[
\mathcal{G}_{1}\eta=-\left(  D_{x}^{2}+\mu D_{x}+\frac{1}{\sqrt{3}}%
iD_{y}\right)  \iota_{0}\cdot\eta+\lambda\iota_{0}\eta
\]
and
\[
\mathcal{N}_{1}\eta=-\left(  -D_{x}+3\lambda D_{x}-\sqrt{3}\mu iD_{y}%
+D_{x}^{3}-\sqrt{3}iD_{x}D_{y}-\frac{3k^{2}\mu}{4}\right)  \iota_{0}\cdot
\eta.
\]
Let us write the first equation as
\[
iD_{y}\phi\cdot\iota=-\left(  \sqrt{3}D_{x}^{2}+\sqrt{3}\mu D_{x}\right)
\phi\cdot\iota+\sqrt{3}\lambda\phi\iota+\sqrt{3}\mathcal{G}_{1}\eta.
\]
From this, we get
\begin{align*}
\partial_{y}\phi &  =\frac{1}{\iota}i\left[  \sqrt{3}\mu\left(  \partial
_{x}\phi\iota-\phi\partial_{x}\iota\right)  +\sqrt{3}\left(  \partial_{x}%
^{2}\phi\iota-2\partial_{x}\phi\partial_{x}\iota+\phi\partial_{x}^{2}%
\iota\right)  \right] \\
&  +\frac{\partial_{y}\iota}{\iota}\phi-\sqrt{3}\lambda i\phi-\frac{\sqrt{3}%
i}{\iota}\mathcal{G}_{1}\eta.
\end{align*}
Inserting $\left(  \ref{dy}\right)  $ into the right hand side of the second
equation, we get the following third order ODE:
\begin{align}
&  4\phi^{\prime\prime\prime}+\left(  6\mu-12\frac{\partial_{x}\iota}{\iota
}\right)  \phi^{\prime\prime}+\left(  3\mu^{2}-1-12\mu\frac{\partial_{x}\iota
}{\iota}+12\frac{\left(  \partial_{x}\iota\right)  ^{2}}{\iota^{2}}\right)
\phi^{\prime}\nonumber\\
&  =\frac{1}{\iota}\left(  3\partial_{x}\left(  \mathcal{G}_{1}\eta\right)
-\frac{6\partial_{x}\iota}{\iota}\mathcal{G}_{1}\eta+\mathcal{N}_{1}\eta
+3\mu\mathcal{G}_{1}\eta\right)  . \label{inh}%
\end{align}
Letting $g=\phi^{\prime},$ we obtain the corresponding homogeneous equation
\begin{align}
&  4g^{\prime\prime}+\left(  6\mu-6k\tanh\left(  \frac{k}{2}\left(
x-py\right)  \right)  \right)  g^{\prime}\nonumber\\
&  +\left(  -k^{2}-6\mu k\tanh\left(  \frac{k}{2}\left(  x-py\right)  \right)
+3k^{2}\tanh^{2}\left(  \frac{k}{2}\left(  x-py\right)  \right)  \right)  g=0.
\label{tau1}%
\end{align}

Next, we would like to find solutions for the equation $\left(  \ref{tau1}%
\right)  .$ Introduce the new variable $z=\tanh\frac{k\left(  x-py\right)
}{2}.$ The equation $\left(  \ref{tau1}\right)  $ becomes%
\begin{align}
&  k^{2}\left(  1-z^{2}\right)  ^{2}g_{zz}+\left(  \left(  3\mu-3kz\right)
k\left(  1-z^{2}\right)  -2k^{2}\left(  1-z^{2}\right)  z\right)
g_{z}\nonumber\\
&  +\left(  -k^{2}-6\mu kz+3k^{2}z^{2}\right)  g=0. \label{ode}%
\end{align}
Recall that $\mu=-\sqrt{\frac{1-k^{2}}{3}}.$

\begin{lemma}
\label{doe3}The equation $\left(  \ref{ode}\right)  $ has solutions of the
form
\[
\xi_{1}=\frac{z-\frac{3\mu}{k}}{z^{2}-1}\text{ \ and }\xi_{2}=\frac{1}%
{\sqrt{z^{2}-1}}\left(  \frac{z-1}{z+1}\right)  ^{\frac{3\mu}{2k}}.
\]

\end{lemma}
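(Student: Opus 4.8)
The plan is to treat (\ref{ode}) as a second-order linear ODE in the single variable $z$ and simply display a fundamental system, exactly as was done for the analogous equations in the lump case (Lemma \ref{h1}, Lemma \ref{homo}); the two formulas for $\xi_1,\xi_2$ are then confirmed by direct substitution, which, as elsewhere in the paper, is most safely carried out with \textit{Maple}. To see why solutions of the stated shape are the natural guess, divide (\ref{ode}) by $k^{2}(1-z^{2})^{2}$: the only singular points are $z=1$, $z=-1$, and $z=\infty$, and the leading coefficient vanishes to second order at $z=\pm1$. A Frobenius computation at $z=1$ gives indicial exponents $-1$ and $\frac{3\mu-k}{2k}$, and at $z=-1$ the exponents $-1$ and $-\frac{3\mu+k}{2k}$. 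This already points to the two candidates: a solution that is meromorphic with a simple pole at each of $z=\pm1$ (and, from its behaviour at $\infty$, necessarily of the form $\frac{az+b}{z^{2}-1}$), and a solution carrying the fractional exponents, i.e.\ of the form $(z-1)^{\frac{3\mu-k}{2k}}(z+1)^{-\frac{3\mu+k}{2k}}=(z^{2}-1)^{-1/2}\left(\frac{z-1}{z+1}\right)^{\frac{3\mu}{2k}}$.

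To pin down the first solution I would substitute $\xi_{1}=\frac{az+b}{z^{2}-1}$ into (\ref{ode}), clear the denominator, and match the coefficients of the resulting polynomial (cubic in $z$); this forces $b=-\frac{3\mu}{k}\,a$ up to an overall scaling, and with that relation every coefficient vanishes identically. Note that this cancellation uses only the coefficients as they appear in (\ref{ode})---the relation $3\mu^{2}+k^{2}=1$ has already been consumed in passing from (\ref{inh}) to (\ref{tau1}). For the second solution I would apply reduction of order to $\xi_{1}$: the integrating factor is $\exp\left(-\int \frac{3\mu/k-5z}{1-z^{2}}\,dz\right)=(1-z)^{\frac{3\mu/k-5}{2}}(1+z)^{-\frac{3\mu/k+5}{2}}$, and when this is combined with $\xi_{1}^{-2}$ and then with the outer factor $\xi_{1}$, the integrand simplifies so that the second solution obeys $\xi_{2}'/\xi_{2}=\frac{3\mu/k-z}{z^{2}-1}=-\xi_{1}$, which integrates in closed form to exactly the stated $\xi_{2}$. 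Equivalently, one can bypass the integration by plugging the displayed $\xi_{2}$ directly into (\ref{ode}): from $\xi_{2}'=\xi_{2}\,\frac{3\mu/k-z}{z^{2}-1}$ one gets $\xi_{2}''=\xi_{2}\,\frac{9(\mu/k)^{2}+2z^{2}+1-12(\mu/k)z}{(z^{2}-1)^{2}}$, and the resulting bracketed polynomial collapses to $0$.

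I expect no conceptual obstacle here; the only thing to be careful about is the bookkeeping. The coefficients in (\ref{ode}) are rational in $z$ with the parameter $\mu=-\sqrt{(1-k^{2})/3}$ entering linearly, so the coefficient matching for $\xi_{1}$ and the polynomial collapse for $\xi_{2}$ each involve a handful of exact cancellations in which an arithmetic slip is easy to make. This is precisely the kind of finite, routine verification for which a computer algebra check is appropriate, in line with how Lemma \ref{h1} is handled.
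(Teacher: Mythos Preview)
Your proposal is correct and matches the paper's approach: the paper states this lemma without any proof environment at all, implicitly treating it as a direct verification in the spirit of Lemma \ref{h1} (solvable by \textit{Maple}). Your Frobenius analysis and reduction-of-order outline give considerably more motivation than the paper supplies, but the underlying method---substitute and check---is the same.
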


Here we have abused some notations. The function $\xi_{i}$ is not the same as
the one appeared in the previous sections.

One crucial fact is that as $\left\vert z\right\vert \rightarrow+\infty
,\xi_{1}=O\left(  \frac{1}{z}\right)  ,\xi_{2}=O\left(  \frac{1}{z}\right)  .$
This implies that at the singularities of $\left(  \ref{eta}\right)  ,$ where
$\iota=0,$ the solutions $\xi_{i}$ are actually smooth.

For any given function $\eta,$ with these explicit fundamental solutions, the
solutions of the inhomogeneous third order ODE $\left(  \ref{inh}\right)  $
can be written down using the variation of parameter formula, as have done for
the $\tau_{0},\tau_{1}$ case. Note that as $x\rightarrow+\infty,$
\[
\xi_{1}\sim e^{kx},\xi_{2}\sim e^{\frac{k-3\mu}{2}x}.
\]
while as $x\rightarrow-\infty,$
\[
\xi_{1}\sim e^{-kx},\xi_{2}\sim e^{-\frac{k+3\mu}{2}x}.
\]
The Wronskian of $\xi_{1}$ and $\xi_{2}$ behaves like $e^{-\frac{3k}{2}%
-\frac{3}{2}\mu}.$ We also remark that $\xi_{1}$ and $\xi_{2}$ are both
$\frac{2\pi}{kb}$-periodic in $y,$ while $\iota_{1}$ is only $\frac{4\pi}{kb}$-periodic.

Now we suppose that as $x\rightarrow+\infty$, the function $\phi=\rho\left(
y\right)  e^{\frac{k-3\mu}{2}x}$ satisfies asmptotically the equation
\begin{equation}
\left(  D_{x}^{2}+\mu D_{x}+\frac{1}{\sqrt{3}}iD_{y}\right)  \phi\cdot
\iota-\lambda\phi\iota=0. \label{first}%
\end{equation}
Let us write $e^{\frac{k-3\mu}{2}x}$ as $\zeta.$ Then the left hand side of
$\left(  \ref{first}\right)  $ is
\begin{align*}
&  \partial_{x}^{2}\zeta\iota\rho-2\partial_{x}\zeta\partial_{x}\iota
\rho+\zeta\partial_{x}^{2}\iota\rho+\mu\partial_{x}\zeta\iota\rho-\mu\zeta
\rho\partial_{x}\iota\\
&  +\frac{i}{\sqrt{3}}\left(  \partial_{y}\zeta\rho+\zeta\rho^{\prime}\right)
\iota-\frac{i}{\sqrt{3}}\zeta\rho\partial_{y}\iota-\lambda\zeta\rho\iota.
\end{align*}
For each fixed $y,$ sending $x\rightarrow+\infty\,$, using the asymptotic
behavior of $\iota$ and writing $\alpha=\frac{k-3\mu}{2},$ we get
\[
\frac{i}{\sqrt{3}}\rho^{\prime}+\left(  \alpha^{2}-\alpha k+\frac{k^{2}}%
{4}+\mu\alpha-\mu\frac{k}{2}-\lambda\right)  \rho=0.
\]
Hence $\rho=c\exp\left(  \frac{\sqrt{3}\left(  1-2k^{2}\right)  }{4}iy\right)
,$ where $c$ is a constant. Similarly, $\exp\left(  \frac{kb}{2}yi\right)
e^{kx}$ approximately solves $\left(  \ref{first}\right)  $ as $x\rightarrow
+\infty.$

\begin{remark}
We recall that the solutions of the homogeneous system in Lemma \ref{Tau2}
indeed have explicit formulas. However, we don't know whether explicit
formulas are still available for the homogeneous version of system $\left(
\ref{HB1}\right)  .$ In view of the fact that the minimal period of the
function $\exp\left(  \frac{\sqrt{3}\left(  1-2k^{2}\right)  }{4}iy\right)  $
is in general not equal to that of $\iota,$ we conjecture that no explicit
formula is available in this case. Note that the indefinite integral
\[
\int\left(  \cosh x\right)  \left(  \frac{\tanh x-1}{\tanh x+1}\right)
^{\frac{3\mu}{2k}}dx
\]
also seems to not been able to explicitly integrated.
\end{remark}

For later purpose, let us define the function
\[
\Phi_{0}:=\left(  D_{x}^{2}+\mu D_{x}+\frac{1}{\sqrt{3}}iD_{y}\right)
\phi\cdot\iota-\lambda\phi\iota+\left(  D_{x}^{2}+\mu D_{x}+\frac{1}{\sqrt{3}%
}iD_{y}\right)  \iota_{0}\cdot\eta-\lambda\iota_{0}\eta.
\]
We have the following

\begin{proposition}
\label{third order ode}Suppose $\phi$ satisfies $\left(  \ref{inh}\right)  $
and $\eta$ satisfies the linearized bilinear KP-I equation at $\iota.$ Then
\[
\partial_{x}^{3}\Phi_{0}=\left(  -\frac{3\mu}{2}+\frac{6}{\iota}\partial
_{x}\left(  \iota\right)  \right)  \partial_{x}^{2}\Phi_{0}+b\partial_{x}%
\Phi_{0}+c\Phi_{0},
\]
where%
\[
b=\frac{1}{4}k^{2}+6\mu\frac{\partial_{x}\iota}{\iota}+3\frac{\partial_{x}%
^{2}\iota}{\iota}-15\left(  \frac{\partial_{x}\iota}{\iota}\right)  ^{2},
\]%
\begin{align*}
c  &  =-\frac{1}{4}k^{2}\frac{\partial_{x}\iota}{\iota}-6\mu\left(
\frac{\partial_{x}\iota}{\iota}\right)  ^{2}+\frac{3\mu}{2}\frac{\partial
_{x}^{2}\iota}{\iota}+15\left(  \frac{\partial_{x}\iota}{\iota}\right)  ^{3}\\
&  -9\frac{\partial_{x}\iota\partial_{x}^{2}\iota}{\iota^{2}}+\frac
{\partial_{x}^{3}\iota}{\iota}.
\end{align*}

\end{proposition}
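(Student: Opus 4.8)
The statement is the linearized analogue of the compatibility (zero–curvature) condition behind the B\"acklund transformation $(\ref{B1})$: once $\phi$ solves the reduced third order equation $(\ref{inh})$ and $\eta$ solves the linearized bilinear KP-I equation at $\iota$, the residual $\Phi_0$ of the first B\"acklund equation can no longer be prescribed freely — it, together with its first two $x$-derivatives, satisfies a closed first order linear system in $x$ whose coefficient matrix is explicit in $\iota$ and its $x$-derivatives, and the displayed formula is the scalar form of that system. So the plan is to verify this identity directly, organizing the computation exactly as in the proofs of Proposition $\ref{p1}$ and Proposition $\ref{P4}$, the only genuinely new feature being that $\iota$ is transcendental rather than polynomial.

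Concretely, I would proceed as follows. First expand $\Phi_0$ as a linear expression in $\phi,\partial_x\phi,\partial_x^2\phi,\partial_y\phi$ and in $\eta,\partial_x\eta,\partial_x^2\eta,\partial_y\eta$ with coefficients rational in $\iota,\partial_x\iota,\partial_x^2\iota$ (recall $\iota_0\equiv 1$, so the $\eta$-part of $\Phi_0$ is just $\partial_x^2\eta-\mu\partial_x\eta-\tfrac{i}{\sqrt3}\partial_y\eta-\lambda\eta$). Then differentiate in $x$ up to three times and systematically eliminate all high derivatives by four substitution rules: (i) equation $(\ref{inh})$ together with its first and second $x$-derivatives, to remove $\partial_x^3\phi,\partial_x^4\phi,\partial_x^5\phi$ in favour of $\phi,\partial_x\phi,\partial_x^2\phi$ and derivatives of $\eta$; (ii) the definition of $\Phi_0$ itself, solved for $\partial_y\phi$, and its $x$-derivatives, to express every $\partial_x^j\partial_y\phi$ through $x$-derivatives of $\phi$, of $\eta$, and of $\Phi_0$; (iii) the linearized bilinear KP-I equation $-D_x^2\eta\cdot\iota+D_x^4\eta\cdot\iota-D_y^2\eta\cdot\iota=0$, and its $x$-derivatives, to remove $\partial_x^4\eta,\partial_x^5\eta$ and $\partial_y^2\eta$; and (iv) the explicit formula $\iota=e^{\frac{k}{2}(x-py)}+e^{-\frac{k}{2}(x-py)}$ to simplify all coefficients and trade any stray $y$-derivative of $\iota$ for an $x$-derivative. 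After these reductions, $\partial_x^3\Phi_0$ minus the claimed right hand side becomes a linear combination — with coefficients that are explicit rational functions of $\iota,\partial_x\iota,\partial_x^2\iota,\partial_x^3\iota,\mu,k,\lambda$ — of the independent quantities $\phi,\partial_x\phi,\partial_x^2\phi$ and $\partial_x^j\eta,\partial_x^j\partial_y\eta$, and it remains to check that each of these coefficients vanishes. Every such vanishing is a purely algebraic identity, to be confirmed with \textit{Maple}, using $\lambda=\tfrac{k^2}{4}$, $\mu=\tfrac{pi}{\sqrt3}=-\sqrt{\tfrac{1-k^2}{3}}$ and $p^2=-(1-k^2)$ — the same relations that produced the fundamental solutions in Lemma $\ref{doe3}$.

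The main obstacle is bookkeeping, not ideas. In the $\tau_0\to\tau_1$ case every function in sight was a polynomial, which made Proposition $\ref{p1}$ a finite symbolic check; here $\iota$ is transcendental and, worse, vanishes along the line $x=0$, so the coefficients $\partial_x\iota/\iota$, $\partial_x^2\iota/\iota$, $\partial_x^3\iota/\iota$, $(\partial_x\iota/\iota)^2$, $(\partial_x\iota/\iota)^3$, $\partial_x\iota\,\partial_x^2\iota/\iota^2$ occurring in $b$ and $c$ must be handled as meromorphic functions, and one has to confirm that in the final identity the poles cancel — consistent with, and indeed forced by, the smoothness of $\xi_1,\xi_2$ at the zeros of $\iota$ noted after Lemma $\ref{doe3}$. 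The second delicate point is checking that substitution rules (i)--(iii) genuinely close the computation at order three, i.e.\ that no $\partial_x^4\eta$, no $\partial_x^2\partial_y\eta$, and no higher $\phi$-derivative survives; once the elimination is performed in the order listed this is automatic, but it is exactly the place where an error in the passage from $(\ref{HB1})$ to $(\ref{inh})$ would surface, so I would re-derive $(\ref{inh})$ carefully as a preliminary sanity check.
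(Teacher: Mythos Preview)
Your proposal is correct and matches the paper's own proof, which consists of the single sentence ``This is similar to Proposition~\ref{p1} and can be directly verified with the help of Maple.'' Your write-up is considerably more detailed than the paper's, and the organization you suggest (eliminate high $x$-derivatives of $\phi$ via \eqref{inh}, $y$-derivatives of $\phi$ via the definition of $\Phi_0$, high derivatives of $\eta$ via the linearized bilinear equation, and $y$-derivatives of $\iota$ via $\partial_y\iota=-p\,\partial_x\iota$) is exactly the right way to structure the symbolic check; one small correction is that $\iota$ vanishes only at isolated points $(0,\,(2n+1)\pi/(kb))$, not along the whole line $x=0$, but this does not affect your argument.
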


\begin{proof}
This is similar to Porposition \ref{p1} and can be directly verified with the
help of Maple.
\end{proof}

\subsection{The linearized B\"{a}cklund transformation between $\iota_{1}$ and
$\iota_{2}$}

Let $k\in\left(  0,\frac{1}{2}\right)  .$ Throughout the paper, we denote by
$A$ the constant $\sqrt{\frac{1-4k^{2}}{1-k^{2}}}.$ Then we define
\begin{align*}
\tilde{\iota}_{2}  &  =e^{\frac{\eta_{1}+\eta_{2}}{2}}+e^{-\frac{\eta_{1}%
+\eta_{2}}{2}}+A\left(  e^{\frac{\eta_{1}-\eta_{2}}{2}}+e^{\frac{\eta_{2}%
-\eta_{1}}{2}}\right) \\
&  =2\left(  \cosh\left(  k\left(  x-t\right)  \right)  +A\cos\left(
kby\right)  \right)  ,
\end{align*}
where $\eta_{1}=k\left(  x-py-t\right)  ,\eta_{2}=k\left(  x+py-t\right)  .$
As is well known, $\tilde{\iota}_{2}$ is a solution to the bilinear KP-I
equation, as can be checked by hand. Define $\iota_{2}$ through the relation
$\tilde{\iota}_{2}\left(  t,x,y\right)  =\iota_{2}\left(  x-t,y\right)  .$ Let
$\iota_{1}=r\exp(\frac{k\left(  x-py\right)  }{2})+\exp(-\frac{k\left(
x-py\right)  }{2})$ and $\tilde{\iota}_{1}=\iota_{1}\left(  x-t,y\right)  ,$
where
\[
\mu^{\ast}=-\frac{pi}{\sqrt{3}},r=\frac{\mu^{\ast}A}{k+\mu^{\ast}}.
\]
Note that $\iota_{1}$ is simply an $x$-translation of the function $\iota$
discussed in the previous section and hence the results proved there are also
true for $\iota_{1}.$ We emphasize that $\iota_{2}$ is $\frac{2\pi}{kb}%
$-periodic in $y$, while the minimal period of $\iota_{1}$ in the $y$
direction is equal to $\frac{4\pi}{kb}.$

\begin{lemma}
We have the following B\"{a}cklund transformation between $\tilde{\iota}_{1}$
and $\tilde{\iota}_{2}$:
\begin{equation}
\left\{
\begin{array}
[c]{l}%
\left(  D_{x}^{2}+\mu^{\ast}D_{x}+\frac{1}{\sqrt{3}}iD_{y}\right)
\tilde{\iota}_{1}\cdot\tilde{\iota}_{2}=\lambda\tilde{\iota}_{1}\tilde{\iota
}_{2},\\
\left(  D_{t}+3\lambda D_{x}-\sqrt{3}\mu^{\ast}iD_{y}+D_{x}^{3}-\sqrt{3}%
iD_{x}D_{y}-\frac{3k^{2}\mu^{\ast}}{4}\right)  \tilde{\iota}_{1}\cdot
\tilde{\iota}_{2}=0.
\end{array}
\right.  \label{iota1}%
\end{equation}

\end{lemma}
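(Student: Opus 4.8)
The plan is to verify both bilinear identities in \eqref{iota1} by direct computation, in the spirit of the proof of the B\"{a}cklund transformation \eqref{B1} between $\iota_0$ and $\tilde{\iota}$. The point is that $\tilde{\iota}_1=r\,e^{\eta_1/2}+e^{-\eta_1/2}$ and $\tilde{\iota}_2=e^{(\eta_1+\eta_2)/2}+e^{-(\eta_1+\eta_2)/2}+A\bigl(e^{(\eta_1-\eta_2)/2}+e^{(\eta_2-\eta_1)/2}\bigr)$ are finite sums of exponentials in the linear phases $\eta_1=k(x-py-t)$ and $\eta_2=k(x+py-t)$, and that the Hirota operators act on a product of two such exponentials by multiplication, $D_x^mD_t^nD_y^\ell\,e^{\alpha}\cdot e^{\beta}=\bigl(\partial_x(\alpha-\beta)\bigr)^m\bigl(\partial_t(\alpha-\beta)\bigr)^n\bigl(\partial_y(\alpha-\beta)\bigr)^\ell e^{\alpha+\beta}$ for linear phases $\alpha,\beta$. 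Expanding $\tilde{\iota}_1\cdot\tilde{\iota}_2$ into its eight bilinear terms $e^{\alpha}\cdot e^{\beta}$ and applying either operator in \eqref{iota1} produces a finite sum of terms of the form (scalar)$\,e^{\alpha+\beta}$; collecting by the distinct resulting phases (there are six: the four phases $\pm\eta_1\pm\tfrac12\eta_2$, each arising from a single term, and $\pm\tfrac12\eta_2$, each arising from two terms) reduces each identity to a short list of scalar equations.

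First I would establish the first identity, i.e.\ $\bigl(D_x^2+\mu^\ast D_x+\tfrac1{\sqrt3}iD_y-\lambda\bigr)\tilde{\iota}_1\cdot\tilde{\iota}_2=0$. In the four ``singleton'' sectors the coefficient is simply the value of the symbol $w_x^2+\mu^\ast w_x+\tfrac1{\sqrt3}iw_y-\lambda$ of this operator at the relevant phase difference, which in each case equals $\pm\tfrac12\eta_2$; these symbol values vanish by the same computation that gives the first line of \eqref{B1}, using $\lambda=\tfrac{k^2}{4}$ and $\mu^\ast=-\tfrac{pi}{\sqrt3}$. In the two mixed sectors $\pm\tfrac12\eta_2$ the coefficient is a sum of two symbol values weighted by the product of the appropriate exponential coefficients; these combine to zero precisely because $r=\frac{\mu^\ast A}{k+\mu^\ast}$ and $A^2=\frac{1-4k^2}{1-k^2}$ (together with $p^2=-(1-k^2)$). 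Concretely, in the sector $e^{\eta_2/2}$ the cancellation reduces to $(b-\sqrt3 k)(b+\sqrt3 k)=1-4k^2$ with $b=\sqrt{1-k^2}$, which holds; this is the exact analogue of how $r$ is forced in the $\iota_0$--$\tilde{\iota}$ step.

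For the second identity, i.e.\ that the operator in the second line of \eqref{iota1}, namely $D_t+3\lambda D_x-\sqrt3\,\mu^\ast iD_y+D_x^3-\sqrt3\,iD_xD_y-\tfrac{3k^2\mu^\ast}{4}$, annihilates $\tilde{\iota}_1\cdot\tilde{\iota}_2$, I would either match coefficients in the same fashion (recalling that $D_t$ acts as $-D_x$ on functions of $x-t$), the constant $-\tfrac{3k^2\mu^\ast}{4}$ being tuned so that the cubic symbol vanishes at every occurring phase difference; or, more conceptually, invoke the operator identity \eqref{back} with $f=\tilde{\iota}_1$, $g=\tilde{\iota}_2$, $\mu=\mu^\ast$, $\lambda=\tfrac{k^2}{4}$, $\nu=-\tfrac{3k^2\mu^\ast}{4}$. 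Since $\tilde{\iota}_1$ (an $x$-translate of $\iota$) and $\tilde{\iota}_2$ both solve the bilinear KP-I equation \eqref{bi}, the left-hand side of \eqref{back} vanishes; once the first identity of \eqref{iota1} is in hand, the last two terms on the right of \eqref{back} also vanish, leaving $D_x\bigl[(D_t+\cdots+\nu)\tilde{\iota}_1\cdot\tilde{\iota}_2\bigr]\cdot(\tilde{\iota}_1\tilde{\iota}_2)=0$; hence $(D_t+\cdots+\nu)\tilde{\iota}_1\cdot\tilde{\iota}_2=C(y,t)\,\tilde{\iota}_1\tilde{\iota}_2$, and letting $x\to+\infty$ and retaining only the dominant exponentials shows $C\equiv0$.

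The main obstacle is not conceptual but bookkeeping: one must follow several exponential sectors at once, and the cancellations in the mixed sectors hinge on the precise relations among $k$, $p=\sqrt{1-k^2}\,i$, $A=\sqrt{\tfrac{1-4k^2}{1-k^2}}$ and $r$. In the route through \eqref{back}, the one delicate point beyond the first identity is the vanishing of the $x$-independent ``constant of integration'' $C$, which is exactly what the term $-\tfrac{3k^2\mu^\ast}{4}$ is there to guarantee. As elsewhere in the paper, the brute-force checking is best carried out with \textit{Maple}. Finally, note that $\iota_1$ has minimal $y$-period $\tfrac{4\pi}{kb}$ while $\iota_2$ has period $\tfrac{2\pi}{kb}$, so \eqref{iota1} is to be read as an identity of $\tfrac{4\pi}{kb}$-periodic functions, and no periodicity obstruction arises.
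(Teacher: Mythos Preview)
Your proposal is correct and follows essentially the same approach as the paper: you verify the first identity by expanding $\tilde{\iota}_1\cdot\tilde{\iota}_2$ into exponential sectors and checking the scalar cancellations forced by the choice of $r$ and $A$, and you deduce the second identity from the first via the bilinear operator identity \eqref{back}. The paper's proof is terser (it records the reduction of the first equation to two exponential coefficients and then simply invokes \eqref{back} for the second), whereas you spell out the six-sector bookkeeping and the ``constant of integration'' step, but the strategy is the same.
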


\begin{proof}
We have not been able to locate a reference in the literature for this result.
Therefore let us sketch the proof below.

We compute
\begin{align*}
&  \left(  D_{x}^{2}+\mu^{\ast}D_{x}+\frac{iD_{y}}{\sqrt{3}}\right)
\tilde{\iota}_{1}\cdot\tilde{\iota}_{2}-\lambda\tilde{\iota}_{1}\tilde{\iota
}_{2}\\
&  =\partial_{x}^{2}\tilde{\iota}_{1}\tilde{\iota}_{2}-2\partial_{x}%
\tilde{\iota}_{1}\partial_{x}\tilde{\iota}_{2}+\tilde{\iota}_{1}\partial
_{x}^{2}\tilde{\iota}_{2}\\
&  +\mu^{\ast}\left(  \partial_{x}\tilde{\iota}_{1}\iota_{2}-\tilde{\iota}%
_{1}\partial_{x}\tilde{\iota}_{2}\right)  +\frac{i}{\sqrt{3}}\left(
\partial_{y}\tilde{\iota}_{1}\tilde{\iota}_{2}-\tilde{\iota}_{1}\partial
_{y}\tilde{\iota}_{2}\right)  -\lambda\tilde{\iota}_{1}\tilde{\iota}_{2}.
\end{align*}
This is equal to%
\begin{align*}
&  -k^{2}\left(  -e^{-\frac{\eta_{1}}{2}}+re^{\frac{\eta_{1}}{2}}\right)
\left(  -e^{-\frac{\eta_{1}+\eta_{2}}{2}}+e^{\frac{\eta_{1}+\eta_{2}}{2}%
}\right)  +k^{2}\left(  e^{-\frac{\eta_{1}}{2}}+re^{\frac{\eta_{1}}{2}%
}\right)  \left(  e^{-\frac{\eta_{1}+\eta_{2}}{2}}+e^{\frac{\eta_{1}+\eta_{2}%
}{2}}\right) \\
&  -\frac{pi}{\sqrt{3}}\frac{k}{2}\left(  -e^{-\frac{\eta_{1}}{2}}%
+re^{\frac{\eta_{1}}{2}}\right)  \left(  e^{-\frac{\eta_{1}+\eta_{2}}{2}%
}+Ae^{\frac{\eta_{1}-\eta_{2}}{2}}+Ae^{\frac{-\eta_{1}+\eta_{2}}{2}}%
+e^{\frac{\eta_{1}+\eta_{2}}{2}}\right) \\
&  +\frac{pi}{\sqrt{3}}k\left(  e^{-\frac{\eta_{1}}{2}}+re^{\frac{\eta_{1}}%
{2}}\right)  \left(  -e^{-\frac{\eta_{1}+\eta_{2}}{2}}+e^{\frac{\eta_{1}%
+\eta_{2}}{2}}\right) \\
&  +\frac{i}{\sqrt{3}}\frac{pk}{2}\left(  \ e^{-\frac{\eta_{1}}{2}%
}-\ re^{\frac{\eta_{1}}{2}}\right)  \left(  e^{-\frac{\eta_{1}+\eta_{2}}{2}%
}+Ae^{\frac{\eta_{1}-\eta_{2}}{2}}+Ae^{\frac{-\eta_{1}+\eta_{2}}{2}}%
+e^{\frac{\eta_{1}+\eta_{2}}{2}}\right) \\
&  -\frac{ipk}{\sqrt{3}}\left(  e^{-\frac{\eta_{1}}{2}}+re^{\frac{\eta_{1}}%
{2}}\right)  \left(  -Ae^{\frac{\eta_{1}-\eta_{2}}{2}}+Ae^{\frac{-\eta
_{1}+\eta_{2}}{2}}\right)  .
\end{align*}
It can be simplified to
\begin{equation}
2k\left(  rk-\frac{\sqrt{3}ipr}{3}+\frac{\sqrt{3}ipA}{3}\right)
e^{-\frac{\eta_{2}}{2}}+2k\left(  k-\frac{\sqrt{3}i}{3}Apr+\frac{\sqrt{3}%
ip}{3}\right)  e^{\frac{\eta_{1}}{2}}. \label{rk}%
\end{equation}
Due to the choice of the constant $r,$ $\left(  \ref{rk}\right)  $ is equal to zero.

The second equation of $\left(  \ref{iota1}\right)  $ then follows from the
first one and the bilinear identity $\left(  \ref{back}\right)  .$
\end{proof}

Consider the linearized B\"{a}cklund transformation
\begin{equation}
\left\{
\begin{array}
[c]{l}%
\left(  D_{x}^{2}+\mu^{\ast}D_{x}+\frac{1}{\sqrt{3}}iD_{y}\right)  \phi
\cdot\iota_{2}-\lambda\phi\iota_{2}=\mathcal{G}_{2}\eta,\\
\left(  -D_{x}+3\lambda D_{x}-\sqrt{3}\mu^{\ast}iD_{y}+D_{x}^{3}-\sqrt
{3}iD_{x}D_{y}-\frac{3}{4}k^{2}\mu^{\ast}\right)  \phi\cdot\iota
_{2}=\mathcal{N}_{2}\eta.
\end{array}
\right.  \label{B2}%
\end{equation}
Here
\[
\mathcal{G}_{2}\eta:=-\left(  D_{x}^{2}+\mu^{\ast}D_{x}+\frac{1}{\sqrt{3}%
}iD_{y}\right)  \iota_{1}\cdot\eta+\lambda\iota_{1}\eta,
\]%
\[
\mathcal{N}_{2}\eta:=-\left(  -D_{x}+3\lambda D_{x}-\sqrt{3}\mu^{\ast}%
iD_{y}+D_{x}^{3}-\sqrt{3}iD_{x}D_{y}-\frac{3}{4}k^{2}\mu^{\ast}\right)
\iota_{1}\cdot\eta.
\]
Similarly as before, after we plug the $\partial_{y}\phi$ term into the first
equation of $\left(  \ref{B2}\right)  $ into the second one, we get the
following third order ODE:%
\begin{align}
&  4\iota_{2}\phi^{\prime\prime\prime}+\left(  6\mu^{\ast}\iota_{2}%
-12\partial_{x}\iota_{2}\right)  \phi^{\prime\prime}\nonumber\\
&  +\left(  -k^{2}\iota_{2}-12\mu^{\ast}\partial_{x}\iota_{2}+12\frac{\left(
\partial_{x}\iota_{2}\right)  ^{2}}{\iota_{2}}\right)  \phi^{\prime}%
+B\phi\nonumber\\
&  =3\partial_{x}\left(  \mathcal{G}_{2}\eta\right)  -\frac{6\partial_{x}%
\iota_{2}}{\iota_{2}}\mathcal{G}_{1}\eta+\mathcal{N}_{2}\eta+3\mu^{\ast
}\mathcal{G}_{2}\eta,\label{tau2}%
\end{align}
where the coefficient
\begin{align*}
B &  =2\partial_{x}^{3}\iota_{2}-6\frac{\partial_{x}\iota_{2}}{\iota_{2}%
}\partial_{x}^{2}\iota_{2}+k^{2}\partial_{x}\iota_{2}\\
&  +2\sqrt{3}i\frac{-\sqrt{3}i\mu^{\ast}\partial_{x}\iota_{2}+\partial
_{y}\iota_{2}}{\iota_{2}}\partial_{x}\iota_{2}\\
&  -2\sqrt{3}i\partial_{x}\partial_{y}\iota_{2}-6\mu^{\ast}\lambda\iota_{2}.
\end{align*}
Note that if we replace $\iota_{2},\mu^{\ast}$ by $\iota_{1},\mu$
respectively, $B$ will be $0,$ as we already computed.

The homogeneous version of $\left(  \ref{tau2}\right)  $ is
\begin{align}
&  4\iota_{2}\phi^{\prime\prime\prime}+\left(  6\mu^{\ast}\iota_{2}%
-12\partial_{x}\iota_{2}\right)  \phi^{\prime\prime}\nonumber\\
&  +\left(  -k^{2}\iota_{2}-12\mu^{\ast}\partial_{x}\iota_{2}+12\frac{\left(
\partial_{x}\iota_{2}\right)  ^{2}}{\iota_{2}}\right)  \phi^{\prime}%
+B\phi\nonumber\\
&  =0. \label{t2}%
\end{align}
Observe that $\iota_{1}$ is automatically a solution of this equation. We
write $\phi=\iota_{1}h,$ and $g=h^{\prime}.$ Then $g$ should satisfy%
\begin{align}
&  g^{\prime\prime}+\left(  3\frac{\iota_{1}^{\prime}}{\iota_{1}}+\frac
{3\mu^{\ast}}{2}-3\frac{\partial_{x}\iota_{2}}{\iota_{2}}\right)  g^{\prime
}\nonumber\\
&  +\left(  \frac{k^{2}}{2}+\left(  3\mu^{\ast}-6\frac{\partial_{x}\iota_{2}%
}{\iota_{2}}\right)  \frac{\iota_{1}^{\prime}}{\iota_{1}}-3\mu^{\ast}%
\frac{\partial_{x}\iota_{2}}{\iota_{2}}+3\frac{\left(  \partial_{x}\iota
_{2}\right)  ^{2}}{\iota_{2}^{2}}\right)  g=0. \label{eta}%
\end{align}
If we introduce the new variable $z=e^{k\left(  x-py\right)  }.$ Then we get
the following second order ODE for $g$:%
\begin{align*}
&  k^{2}z^{2}\frac{d^{2}g}{dz^{2}}+\left(  k^{2}z+\left(  3\frac{\partial
_{x}\tau_{1}}{\tau_{1}}+\frac{3\mu^{\ast}}{2}-3\frac{\partial_{x}\tau_{2}%
}{\tau_{2}}\right)  kz\right)  \frac{dg}{dz}\\
&  +\left(  \frac{k^{2}}{2}+\left(  3\mu^{\ast}-6\frac{\partial_{x}\tau_{2}%
}{\tau_{2}}\right)  \frac{\partial_{x}\tau_{1}}{\tau_{1}}-3\mu^{\ast}%
\frac{\partial_{x}\tau_{2}}{\tau_{2}}+3\frac{\left(  \partial_{x}\tau
_{2}\right)  ^{2}}{\tau_{2}^{2}}\right)  g\\
&  =0.
\end{align*}
To have an idea for what happened in this equation, we consider the special
case of $y=0.$ In this case, the equation has the form
\begin{align*}
&  z^{2}\frac{d^{2}g}{dz^{2}}+z\left(  1+\frac{3}{2}\frac{rz-1}{rz+1}%
+\frac{3\mu^{\ast}}{2k}-3\frac{z^{2}-1}{z^{2}+2Az+1}\right)  \frac{dg}{dz}\\
&  +\left(  \frac{1}{2}+\left(  3\frac{\mu^{\ast}}{k}-6\frac{z^{2}-1}%
{z^{2}+2Az+1}\right)  \frac{1}{2}\frac{rz-1}{rz+1}-3\frac{\mu^{\ast}}{k^{2}%
}\frac{z^{2}-1}{z^{2}+2Az+1}+\frac{3}{k^{2}}\left(  \frac{z^{2}-1}%
{z^{2}+2Az+1}\right)  ^{2}\right)  g\\
&  =0.
\end{align*}
If we choose $k=\frac{1}{2},$ then $\mu^{\ast}=\frac{1}{2},$ $A=\sqrt
{\frac{1-4k^{2}}{1-k^{2}}}=0,r=0.$ Hence the equation reduces to
\begin{equation}
\frac{d^{2}}{dz^{2}}g+\frac{4-2z^{2}}{\left(  z^{2}+1\right)  z}\frac{d}%
{dz}g+\frac{2\left(  4z^{4}-13z^{2}+7\right)  }{z^{2}\left(  z^{2}+1\right)
^{2}}g=0. \label{equag}%
\end{equation}
The solutions of $\left(  \ref{equag}\right)  $ are given by hypergeometric
functions. For general $k\in\left(  0,\frac{1}{2}\right)  ,$ it seems to us
that the solutions of $\left(  \ref{equag}\right)  $ do not have explicit
expressions. This contrasts with the previous cases and is an interesing issue.

The next lemma gives the asymptotic behavior at infinity of solutions to
$\left(  \ref{eta}\right)  $. Recall that $\mu^{\ast}=\frac{b}{\sqrt{3}}.$

\begin{lemma}
\label{Beta}The indicial roots of $\left(  \ref{eta}\right)  $ are given by
\[
\lambda_{1}^{+}=k,\lambda_{2}^{+}=\frac{k}{2}-\frac{3}{2}\mu^{\ast},
\]
and%
\[
\lambda_{1}^{-}=-k,\lambda_{2}^{-}=-\frac{k}{2}-\frac{3}{2}\mu^{\ast}.
\]
Consequently, the indicial roots of $\left(  \ref{t2}\right)  $ are
\begin{align*}
\alpha_{0}^{+}  &  =\frac{k}{2},\alpha_{1}^{+}=\frac{3k}{2},\alpha_{2}%
^{+}=k-\frac{3}{2}\mu^{\ast},\\
\alpha_{0}^{-}  &  =-\frac{k}{2},\alpha_{1}^{-}=-\frac{3k}{2},\alpha_{2}%
^{-}=-k-\frac{3}{2}\mu^{\ast}.
\end{align*}

\end{lemma}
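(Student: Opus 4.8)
The plan is to obtain the indicial roots of $\left(\ref{eta}\right)$ by inspecting the limits of its coefficients as $x\to\pm\infty$, and then to transfer that information to $\left(\ref{t2}\right)$ through the reduction $\phi=\iota_{1}h$, $g=h'$ that produced $\left(\ref{eta}\right)$.

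First I would record the asymptotics of the coefficient functions. Using the explicit formulas $\iota_{1}=r e^{k(x-py)/2}+e^{-k(x-py)/2}$ and $\iota_{2}=2\bigl(\cosh(kx)+A\cos(kby)\bigr)$, one gets for each fixed $y$ that $\iota_{1}'/\iota_{1}\to\pm k/2$ and $\partial_{x}\iota_{2}/\iota_{2}\to\pm k$ as $x\to\pm\infty$, and that these limits are attained at an exponential rate (the coefficients being rational expressions in $\tanh$, $\cosh$ and $\cos$). Substituting them into $\left(\ref{eta}\right)$, the equation tends, exponentially fast, to the constant coefficient equation
\[
g''+\tfrac32(\mu^{\ast}-k)\,g'+\tfrac12 k(k-3\mu^{\ast})\,g=0\qquad\text{as }x\to+\infty ,
\]
whose characteristic polynomial factors as $(\lambda-k)\bigl(\lambda-\tfrac{k}{2}+\tfrac{3}{2}\mu^{\ast}\bigr)$, so $\lambda_{1}^{+}=k$ and $\lambda_{2}^{+}=\tfrac{k}{2}-\tfrac{3}{2}\mu^{\ast}$; the $x\to-\infty$ limit is the same expression with $k$ replaced by $-k$ throughout, giving $\lambda_{1}^{-}=-k$ and $\lambda_{2}^{-}=-\tfrac{k}{2}-\tfrac{3}{2}\mu^{\ast}$. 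Since the limiting equations are approached exponentially fast, classical asymptotic integration theory (Levinson's theorem, as in Coddington--Levinson) furnishes a fundamental system of $\left(\ref{eta}\right)$ whose elements behave like $e^{\lambda_{j}^{\pm}x}$ up to subexponential corrections as $x\to\pm\infty$ — which is the meaning of ``indicial root'' on the line; equivalently, in the variable $z=e^{k(x-py)}$ used above, $\left(\ref{eta}\right)$ has regular singular points at $z=0,\infty$ and the $\lambda_{j}^{\pm}/k$ are the corresponding Frobenius exponents. For $k\in(0,\tfrac12)$ one checks easily that the four exponents are pairwise distinct, so no resonance occurs.

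For the second assertion, recall that every solution $\phi$ of $\left(\ref{t2}\right)$ is of the form $\phi=\iota_{1}h$ with $g=h'$ solving $\left(\ref{eta}\right)$. Because $\left(\ref{eta}\right)$ is second order, $h=\int g\,dx$ ranges over a three dimensional space spanned by functions with growth exponents $\lambda_{1}^{\pm}$, $\lambda_{2}^{\pm}$ and $0$, the last coming from the additive constant of integration (which corresponds to the known solution $\iota_{1}$ of $\left(\ref{t2}\right)$). Since $\iota_{1}\sim\text{const}\cdot e^{\pm kx/2}$ as $x\to\pm\infty$, multiplication by $\iota_{1}$ shifts each exponent by $\pm k/2$, so the indicial roots of $\left(\ref{t2}\right)$ are $\lambda_{1}^{\pm}\pm\tfrac{k}{2}$, $\lambda_{2}^{\pm}\pm\tfrac{k}{2}$ and $\pm\tfrac{k}{2}$; inserting the values of $\lambda_{j}^{\pm}$ yields exactly $\alpha_{0}^{\pm}=\pm\tfrac{k}{2}$, $\alpha_{1}^{\pm}=\pm\tfrac{3k}{2}$ and $\alpha_{2}^{\pm}=\pm k-\tfrac{3}{2}\mu^{\ast}$.

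All the computations are elementary — one quadratic at each end, followed by an arithmetic shift — and I do not expect a genuine analytic difficulty. The steps that require care are (i) verifying that the coefficients of $\left(\ref{eta}\right)$ really do approach their limits exponentially, so that asymptotic integration theory applies and the notion of indicial root is legitimate on the whole line, and (ii) the bookkeeping in the last step: the shift by the exponent $\pm k/2$ of $\iota_{1}$ together with the extra exponent $0$ from the integration constant, plus the observation that the subdominant component $e^{\mp kx/2}$ of $\iota_{1}$ only contributes to the lower order terms of each asymptotic expansion and hence produces no additional indicial roots. I expect (ii) to be the place where a careless reader would slip.
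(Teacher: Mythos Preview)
Your proof is correct and follows essentially the same approach as the paper: compute the limits of $\iota_{1}'/\iota_{1}$ and $\partial_{x}\iota_{2}/\iota_{2}$ as $x\to\pm\infty$, read off the constant-coefficient limit equation, and solve its characteristic polynomial. You are in fact more explicit than the paper on two points---the justification via Levinson-type asymptotic integration, and the derivation of the $\alpha_{j}^{\pm}$ from the $\lambda_{j}^{\pm}$ via the shift by $\pm k/2$ coming from $\iota_{1}$ together with the extra exponent $0$ from the integration constant---which the paper simply states as a consequence.
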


\begin{proof}
Recall that as $x\rightarrow\pm\infty,\frac{\iota_{1}^{\prime}}{\iota_{1}%
}\rightarrow\pm\frac{k}{2}.$ On the other hand,
\[
\iota_{2}=2\left(  \cosh\left(  kx\right)  +A\cosh\left(  kbiy\right)
\right)  .
\]
Therefore,%
\[
\frac{\partial_{x}\iota_{2}}{\iota_{2}}=\frac{k\sinh\left(  kx\right)  }%
{\cosh\left(  kx\right)  +A\cosh\left(  kbiy\right)  }.
\]
From this we see that
\[
\frac{\partial_{x}\iota_{2}}{\iota_{2}}\rightarrow\pm k,\text{ as
}x\rightarrow\pm\infty.
\]
It follows that the limit equation of $\left(  \ref{eta}\right)  $ at
$+\infty$ is
\[
\eta^{\prime\prime}+\left(  \frac{3\mu^{\ast}}{2}-\frac{3}{2}k\right)
\eta^{\prime}+\left(  \frac{k^{2}}{2}-\frac{3}{2}\mu^{\ast}k\right)  \eta=0.
\]
Hence the indicial roots are
\[
\lambda_{1,2}^{+}=\frac{-\left(  \frac{3\mu^{\ast}}{2}-\frac{3}{2}k\right)
\pm\sqrt{\left(  \frac{3\mu^{\ast}}{2}-\frac{3}{2}k\right)  ^{2}-4\left(
\frac{k^{2}}{2}-\frac{3}{2}\mu^{\ast}k\right)  }}{2}.
\]
Using the fact that $\mu^{\ast}=\frac{b}{\sqrt{3}},$ we get $\lambda_{1}%
^{+}=k,\lambda_{2}^{+}=\frac{k}{2}-\frac{3}{2}\mu^{\ast}.$

Similarly, as $x\rightarrow-\infty,$ we get
\[
\eta^{\prime\prime}+\left(  \frac{3\mu^{\ast}}{2}+\frac{3}{2}k\right)
\eta^{\prime}+\left(  \frac{k^{2}}{2}+\frac{3}{2}\mu^{\ast}k\right)  \eta=0.
\]
Hence the indicial roots are
\[
\frac{-\left(  \frac{3\mu^{\ast}}{2}+\frac{3}{2}k\right)  \pm\sqrt{\left(
\frac{3\mu^{\ast}}{2}+\frac{3}{2}k\right)  ^{2}-4\left(  \frac{k^{2}}{2}%
+\frac{3}{2}\mu^{\ast}k\right)  }}{2}.
\]
Hence $\lambda_{1}^{-}=-k,\lambda_{2}^{-}=-\frac{k}{2}-\frac{3}{2}\mu^{\ast}.$
The proof is thus completed.
\end{proof}

For each fixed $y,$ we use $\beta_{j},j=1,2,$ to denote the fundamental
solutions of $\left(  \ref{t2}\right)  $ with the asymptotic behavior
$e^{\alpha_{i}^{-}x}$ as $x\rightarrow-\infty.$ Since we don't have explicit
formulas for the solutions of $\left(  \ref{t2}\right)  ,$ in the sequel, we
would like to analyze the existence and asymptotic behavior of the solutions
to the system%
\begin{equation}
\left\{
\begin{array}
[c]{l}%
\left(  D_{x}^{2}+\mu^{\ast}D_{x}+\frac{1}{\sqrt{3}}iD_{y}\right)  \phi
\cdot\iota_{2}-\lambda\phi\iota_{2}=0,\\
\left(  -D_{x}+3\lambda D_{x}-\sqrt{3}\mu^{\ast}iD_{y}+D_{x}^{3}-\sqrt
{3}iD_{x}D_{y}-\frac{3}{4}k^{2}\mu^{\ast}\right)  \phi\cdot\iota_{2}=0.
\end{array}
\right.  \label{hB2}%
\end{equation}

Suppose a function of the form $\rho\left(  y\right)  \beta_{1}\left(
x,y\right)  ,$ with the initial condition $\rho\left(  0\right)  =1$
satisfying $\left(  \ref{hB2}\right)  $ approximately, as $x\rightarrow
-\infty.$ Inserting it into the left hand side of the first equation of the
system $\left(  \ref{hB2}\right)  $, we get%
\begin{align*}
&  \partial_{x}^{2}\beta_{1}\iota_{2}\rho-2\partial_{x}\beta_{1}\partial
_{x}\iota_{2}\rho+\beta_{1}\partial_{x}^{2}\iota_{2}\rho+\mu^{\ast}%
\partial_{x}\beta_{1}\iota_{2}\rho-\mu^{\ast}\beta_{1}\rho\partial_{x}%
\iota_{2}\\
&  +\frac{i}{\sqrt{3}}\left(  \partial_{y}\beta_{1}\rho+\beta_{1}\rho^{\prime
}\right)  \iota_{2}-\frac{i}{\sqrt{3}}\beta_{1}\rho\partial_{y}\iota
_{2}-\lambda\beta_{1}\rho\iota_{2}.
\end{align*}
For each fixed $y,$ sending $x\rightarrow-\infty\,$, using the asymptotic
behavior of $\beta_{1}$ and $\iota_{2},$ we get
\[
\frac{i}{\sqrt{3}}\rho^{\prime}+\left(  \left(  \alpha_{1}^{-}\right)
^{2}+2\alpha_{1}^{-}k+k^{2}+\mu^{\ast}\alpha_{1}^{-}+\mu^{\ast}k-\lambda
\right)  \rho=0.
\]
Note that $\alpha_{1}^{-}=-\frac{3}{2}k.$ Hence $\rho\left(  y\right)
=\exp\left(  -\frac{k\sqrt{1-k^{2}}}{2}iy\right)  .$

Next we define the function
\[
\Phi_{0}^{\ast}:=\left(  D_{x}^{2}+\mu^{\ast}D_{x}+\frac{1}{\sqrt{3}}%
iD_{y}\right)  \phi\cdot\iota_{2}-\lambda\phi\iota_{1}+\left(  D_{x}^{2}%
+\mu^{\ast}D_{x}+\frac{1}{\sqrt{3}}iD_{y}\right)  \iota_{0}\cdot\eta
-\lambda\iota_{0}\eta.
\]
A result parallel to Proposition \ref{third order ode} is:

\begin{proposition}
\label{P5}Suppose $\phi$ satisfies $\left(  \ref{tau2}\right)  $ and $\eta$
satisfies the linearized bilinear KP-I equation at $\iota_{2}.$ Then
\[
\partial_{x}^{3}\Phi_{0}^{\ast}=\left(  -\frac{3\mu^{\ast}}{2}+\frac{6}%
{\iota_{2}}\partial_{x}\left(  \iota_{2}\right)  \right)  \partial_{x}^{2}%
\Phi_{0}^{\ast}+b\partial_{x}\Phi_{0}^{\ast}+c\Phi_{0}^{\ast},
\]
where%
\[
b=\frac{1}{4}k^{2}+6\mu^{\ast}\frac{\partial_{x}\iota_{2}}{\iota_{2}}%
+3\frac{\partial_{x}^{2}\iota_{2}}{\iota_{2}}-15\left(  \frac{\partial
_{x}\iota_{2}}{\iota_{2}}\right)  ^{2},
\]%
\begin{align*}
c  &  =-\frac{15\mu^{\ast}}{2}\left(  \frac{\partial_{x}\iota_{2}}{\iota_{2}%
}\right)  ^{2}-\frac{1}{2}k^{2}\frac{\partial_{x}\iota_{2}}{\iota_{2}}%
+\frac{3}{2}\mu^{\ast}\lambda+\frac{\sqrt{3}i}{2}\frac{\partial_{x}%
\partial_{y}\iota_{2}}{\iota_{2}}\\
&  -\frac{\sqrt{3}i}{2\iota_{2}^{2}}\partial_{x}\iota_{2}\partial_{y}\iota
_{2}+\frac{3\mu^{\ast}}{2}\frac{\partial_{x}^{2}\iota_{2}}{\iota_{2}}+\frac
{1}{2}\frac{\partial_{x}^{3}\iota_{2}}{\iota_{2}}\\
&  -\frac{15}{2}\frac{\partial_{x}\iota_{2}\partial_{x}^{2}\iota_{2}}%
{\iota_{2}^{2}}+15\left(  \frac{\partial_{x}\iota_{2}}{\iota_{2}}\right)
^{3}.
\end{align*}

\end{proposition}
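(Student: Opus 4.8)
The plan is to establish this operator identity exactly as Propositions \ref{p1}, \ref{P4} and \ref{third order ode} were established, namely by a direct, computer-assisted verification; so I will only lay out the mechanism that makes it work and identify the delicate point. First I would expand $\Phi_{0}^{\ast}$ completely, using $\iota_{0}\equiv 1$ and the definition of the bilinear derivatives, which writes $\Phi_{0}^{\ast}$ as a differential polynomial of order two in $x$ and order one in $y$ in the pair $(\phi,\eta)$, with coefficients built from $\iota_{2},\partial_{x}\iota_{2},\partial_{x}^{2}\iota_{2}$ and the parameters $k,\mu^{\ast},\lambda$. Differentiating three times in $x$ then produces the derivatives $\partial_{x}^{m}\phi$ and $\partial_{x}^{m}\eta$ for $m\leq 5$, together with $\partial_{x}^{m}\partial_{y}\phi$ and $\partial_{x}^{m}\partial_{y}\eta$.

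The second step is to eliminate every derivative of excessive order using the two hypotheses. Since $\phi$ solves the third order ODE (\ref{tau2}), I can substitute $\partial_{x}^{3}\phi$, and---after differentiating (\ref{tau2}) once and twice---also $\partial_{x}^{4}\phi$ and $\partial_{x}^{5}\phi$, in terms of $\phi,\partial_{x}\phi,\partial_{x}^{2}\phi$ and derivatives of $\eta$; the first equation of the system (\ref{B2}) (equivalently, the defining relation for $\Phi_{0}^{\ast}$) expresses $\partial_{y}\phi$, and hence by $x$-differentiation the terms $\partial_{x}^{m}\partial_{y}\phi$, through $\phi,\partial_{x}\phi,\partial_{x}^{2}\phi$, derivatives of $\eta$, and $\Phi_{0}^{\ast}$ itself, just as in the derivation of (\ref{tau2}) except that the error $\Phi_{0}^{\ast}$ is now retained; and since $\eta$ solves the linearized bilinear KP-I equation at $\iota_{2}$, i.e. $\left(-D_{x}^{2}+D_{x}^{4}-D_{y}^{2}\right)\eta\cdot\iota_{2}=0$, I can trade $\partial_{x}^{m}\eta$ for $m\geq 4$ for $x$-derivatives of $\eta$ of order at most three together with $\partial_{y}^{2}\eta$ and $\partial_{y}\eta$. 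After all these substitutions every surviving term should collapse into a linear combination of $\Phi_{0}^{\ast},\partial_{x}\Phi_{0}^{\ast}$ and $\partial_{x}^{2}\Phi_{0}^{\ast}$, and reading off the coefficients should give precisely the stated $b$, $c$ and leading coefficient $-\tfrac{3\mu^{\ast}}{2}+\tfrac{6}{\iota_{2}}\partial_{x}\iota_{2}$. I would carry out this bookkeeping in \emph{Maple}, exactly as was done for Propositions \ref{p1} and \ref{P4}.

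The main obstacle is organizational rather than conceptual, and it concerns the $y$-derivatives of $\eta$. The linearized KP-I equation controls only $\partial_{y}^{2}\eta$ and its $x$-derivatives, so after the reductions above the expansion of $\partial_{x}^{3}\Phi_{0}^{\ast}$ will still contain terms carrying $\partial_{y}^{2}\eta$, $\partial_{x}\partial_{y}^{2}\eta$, $\partial_{y}\eta$ and $\partial_{x}^{m}\partial_{y}\eta$; since $\Phi_{0}^{\ast}$ is only first order in $y$ in $\eta$, all the second-$y$-derivative terms must cancel identically, while the first-$y$-derivative terms must reorganize into exactly the $\partial_{y}\eta$-content of $b\partial_{x}\Phi_{0}^{\ast}+c\Phi_{0}^{\ast}+\left(-\tfrac{3\mu^{\ast}}{2}+\tfrac{6}{\iota_{2}}\partial_{x}\iota_{2}\right)\partial_{x}^{2}\Phi_{0}^{\ast}$. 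Verifying these cancellations is the delicate part; it is also where the powers $1/\iota_{2}$, $1/\iota_{2}^{2}$, $1/\iota_{2}^{3}$ in $b$ and $c$ originate, since each time a $\partial_{x}$ is commuted past a coefficient of the form $\iota_{2}$ or $1/\iota_{2}$ a factor $\partial_{x}\iota_{2}/\iota_{2}$ is created. As in the remark following Proposition \ref{p1}, one expects this identity ultimately to be a manifestation of the compatibility of a Lax pair for the KP-I equation, but I would not attempt to make that precise here.
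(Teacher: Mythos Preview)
Your proposal is correct and follows the same approach as the paper: the paper gives no detailed proof of this proposition, treating it (like Propositions \ref{p1}, \ref{P4} and \ref{third order ode}) as a direct symbolic verification in \emph{Maple}, and the accompanying remark even notes that those earlier propositions are special cases of this identity for different choices of $\mu,\lambda,\iota$. Your write-up in fact goes further than the paper by explaining the reduction mechanism and isolating the cancellation of the $\partial_{y}^{2}\eta$ terms as the delicate step.
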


\begin{remark}
The coefficients before $\partial_{x}^{2}\Phi_{0}^{\ast}$ and $\partial
_{x}\Phi_{0}^{\ast}$ are essentially same as that of $\left(
\ref{third order ode}\right)  .$ The coefficient before $\Phi_{0}^{\ast}$ is
different, due to the fact that the coefficient $B$ in the equation $\left(
\ref{t2}\right)  $ is nonzero in this case. Indeed, Proposition \ref{p1},
Proposition \ref{P4}, and Proposition \ref{third order ode} can regarded as
special cases of this identity, with different choices of the parameters
$\mu,\lambda,\iota.$
\end{remark}

\subsection{Proof of the nondegeneracy of the periodic solutions}

Recall that $\iota_{2}$ is periodic in $y$ with period $\frac{2\pi}{kb}.$ When
$k=\frac{1}{2},$ $\frac{2\pi}{kb}=\frac{8\pi}{\sqrt{3}}.$ As $k\rightarrow0,$
the period tends to $+\infty.$ The KP-I equation has a family of solutions
$2\partial_{x}^{2}\ln\tilde{\iota}_{2},$ which converges to the lump solution
as $k\rightarrow0.$ In this part, we prove that $T_{2}=2\partial_{x}^{2}%
\ln\iota_{2}$ is nondegenerated in the class of $\frac{2\pi}{kb}$-periodic(in
$y$) solutions.

\begin{lemma}
\label{kernel}Let $k\in\left(  0,\frac{1}{2}\right)  $ be fixed. Let $\varphi$
be a function $\frac{2\pi}{kb}$-periodic in $y.$ Suppose for each fixed $y,$%
\[
\varphi\left(  x,y\right)  \rightarrow0,\text{ as }\left\vert x\right\vert
\rightarrow+\infty.
\]
Assume $\varphi$ solves the linearized KP-I equation at $T_{2}:$
\[
\partial_{x}^{2}\left(  \partial_{x}^{2}\varphi-\varphi+6T_{2}\varphi\right)
-\partial_{y}^{2}\varphi=0.
\]
Then $\varphi$ has the estimate:%
\[
\left\vert \varphi\left(  x,y\right)  \right\vert \leq Ce^{-k\left\vert
x\right\vert }.
\]

\end{lemma}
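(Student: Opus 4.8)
The plan is to pass to Fourier series in the periodic variable $y$ and treat each mode as a fourth‑order ODE in $x$. First, rewrite the equation as $\partial_x^4\varphi-\partial_x^2\varphi-\partial_y^2\varphi=-6\,\partial_x^2(T_2\varphi)$. Since $T_2$ is bounded, the standard (anisotropic) elliptic estimates for the constant‑coefficient operator $\partial_x^4-\partial_x^2-\partial_y^2$ on the cylinder periodic in $y$, used exactly as in the proof of Lemma \ref{core}, upgrade the hypothesis to: $\varphi$ together with all its derivatives is bounded and tends to $0$ as $|x|\to\infty$, uniformly in $y$; moreover, by absorbing the $y$‑derivatives one gets some extra $y$‑regularity, so that in the expansion $\varphi(x,y)=\sum_{n\in\mathbb Z}\varphi_n(x)e^{i\omega_n y}$ with $\omega_n=nkb$ one has $\sum_n\|\varphi_n\|_{C^2(\mathbb R)}<\infty$. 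The algebraic fact that explains the exponent $k$ is that the indicial roots of the limiting operator $\partial_x^4-\partial_x^2+\omega_n^2$ at $x=\pm\infty$ are $\mp\lambda_n^{\pm}$ with $(\lambda_n^{\pm})^2=\tfrac12\bigl(1\pm\sqrt{1-4\omega_n^2}\bigr)$, and the identity $(1-2k^2)^2=1-4k^2b^2$ (valid because $b^2=1-k^2$) gives $\lambda_{\pm1}^{-}=k$, while monotonicity of $\omega\mapsto\tfrac12(1-\sqrt{1-4\omega^2})$ yields $\operatorname{Re}\lambda_n^{\pm}\ge k$ for every $|n|\ge1$. Also note that the zero $y$‑mode of $T_2$ decays like $e^{-2k|x|}$ (the $e^{-k|x|}$ contributions cancel after averaging), whereas $T_2$ and its first two $x$‑derivatives are everywhere $O(e^{-k|x|})$.

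For the zero mode, $\partial_x^2\bigl(\partial_x^2\varphi_0-\varphi_0+6(T_2\varphi)_0\bigr)=0$; integrating twice in $x$ and using the decay just established, the integration constants vanish, so $(\partial_x^2-1)\varphi_0=-6(T_2\varphi)_0$. The right‑hand side is $O(e^{-k|x|})$ with $k<1$, so convolving against the kernel $\tfrac12e^{-|x|}$ of $(1-\partial_x^2)^{-1}$ gives $|\varphi_0(x)|\le Ce^{-k|x|}$, and in fact at the strictly better rate $\min(1,2k)$ once one feeds back the decay of the $\varphi_m$'s; the same bound then holds for $\partial_x\varphi_0,\partial_x^2\varphi_0$.

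For $n\neq0$, $\varphi_n$ solves $\partial_x^4\varphi_n-\partial_x^2\varphi_n+\omega_n^2\varphi_n=\mathcal R_n(x)$ with $\mathcal R_n=-6\,\partial_x^2(T_2\varphi)_n$, which satisfies $|\mathcal R_n(x)|\le C_ne^{-k|x|}$ because $|T_2|+|\partial_xT_2|+|\partial_x^2T_2|\le Ce^{-k|x|}$ and $\varphi$ is bounded in $C^2$. Writing this as a first‑order linear system $U'=\mathcal A_nU+\widetilde{\mathcal R}_n(x)$ with $\mathcal A_n$ the constant matrix of the limiting equation (eigenvalues $\mp\lambda_n^{\pm}$, $\operatorname{Re}\lambda_n^{\pm}\ge k$) and $\|\widetilde{\mathcal R}_n(x)\|\le C_ne^{-k|x|}$, the usual stable‑manifold / variation‑of‑parameters analysis for asymptotically constant systems shows the decaying solution obeys $|\varphi_n(x)|\le C_n'e^{-k|x|}$ for all $|n|\ge2$ (there the homogeneous rates are strictly above $k$, so no resonance), and $|\varphi_n(x)|\le C_n'|x|\,e^{-k|x|}$ a priori for $|n|=1$. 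The latter, borderline case is the main obstacle, and I would remove the spurious factor $|x|$ by a short bootstrap: with $\rho_m\ge k$ already known for $m\neq\pm1$ (and $>k$ for $m=0,\pm2$), each piece of $(T_2\varphi)_{\pm1}=(\widehat{T_2})_0\varphi_{\pm1}+(\widehat{T_2})_{\pm1}\varphi_0+(\widehat{T_2})_{\mp1}\varphi_{\pm2}+\cdots$ (and its $x$‑derivatives) decays \emph{strictly} faster than $e^{-k|x|}$ — the first term because $(\widehat{T_2})_0=O(e^{-2k|x|})$, the others because $\varphi_0,\varphi_{\pm2}$ decay at rate $>k$ — so the forcing is non‑resonant with $\lambda_{\pm1}^{-}=k$ and one gets the clean bound $|\varphi_{\pm1}(x)|\le Ce^{-k|x|}$. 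It remains to control the constants uniformly in $n$: this follows from the ODE estimates (the $C_n'$ are dominated by $\|\varphi_n\|_{L^\infty}$ plus a weighted norm of $\mathcal R_n$) together with $\sum_n\|\varphi_n\|_{C^2}<\infty$, whence $|\varphi(x,y)|\le\sum_n|\varphi_n(x)|\le Ce^{-k|x|}$, uniformly in $y$. The delicate points are thus exactly the resonant mode $|n|=1$ and the $n$‑uniformity of the ODE estimates; the rest is routine.
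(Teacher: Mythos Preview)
Your approach—Fourier expansion in the periodic variable $y$, followed by analysis of the indicial roots of the limiting constant–coefficient fourth–order ODEs via the identity $1-4k^2b^2=(1-2k^2)^2$—is exactly the paper's. The paper's proof is much terser: it writes down the mode equations and the indicial–root computation (Cases 1 and 2) and stops, without separately treating the zero mode, the borderline resonance at $|n|=1$, the coupling of modes through the $y$–dependence of $T_2$, or the uniformity in $n$; your handling of these points (in particular the bootstrap using $(\widehat{T_2})_0=O(e^{-2k|x|})$) is a legitimate fleshing–out of the same argument rather than a different method.
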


\begin{proof}
Since $\varphi$ is $\frac{2\pi}{kb}$-periodic in $y,$ we can write it in the
form of a Fourier series. Suppose
\[
\varphi=Z_{0}\left(  x\right)  +\sum_{n=1}^{+\infty}\left(  Z_{n}\left(
x\right)  \cos\left(  kbny\right)  +Z_{n}^{\ast}\left(  x\right)  \sin\left(
kbny\right)  \right)  .
\]
Then $Z_{n}\left(  x\right)  $ satisfies
\[
\partial_{x}^{2}\left(  \partial_{x}^{2}Z_{n}-Z_{n}+6T_{2}Z_{n}\right)
+k^{2}b^{2}n^{2}Z_{n}=0.
\]
Note that the indicial roots $\lambda$ of the equation
\[
Z^{\left(  4\right)  }-Z^{\left(  2\right)  }+k^{2}b^{2}n^{2}Z=0
\]
are given by
\[
\lambda^{2}=\frac{1\pm\sqrt{1-4k^{2}b^{2}n^{2}}}{2}.
\]
We obtain
\[
\lambda=\pm\sqrt{\frac{1\pm\sqrt{1-4k^{2}b^{2}n^{2}}}{2}}.
\]
We claim that for $n\in\mathbb{N},$ $\left\vert \operatorname{Re}%
\lambda\right\vert >k.$ There are two cases.

Case 1: $4k^{2}b^{2}n^{2}<1.$

We need to show
\[
\frac{1-\sqrt{1-4k^{2}b^{2}n^{2}}}{2}\geq k^{2}.
\]
The validity of this inequality holds follows from the fact that
\[
1-4k^{2}b^{2}=\left(  1-2k^{2}\right)  ^{2}.
\]

Case 2: $4k^{2}b^{2}n^{2}>1.$

Writing $\lambda=c+di,$ we have, for some $\alpha\in\mathbb{R},$
\[
c^{2}-d^{2}+2cdi=\lambda^{2}=\frac{1\pm\alpha i}{2}.
\]
This implies $c^{2}-d^{2}=\frac{1}{2}$ and $2cd=\frac{\alpha}{2}.$ Since
$k\leq\frac{1}{2},$ we find that $\left\vert c\right\vert \geq\frac{1}%
{\sqrt{2}}>k.$ This finishes the proof.
\end{proof}

Let $\varphi$ be a function satisfying the assumption Lemma \ref{kernel}. We
differentiate the equation $T_{2}=2\partial_{x}^{2}\ln\iota_{2}$ and get a
function $\eta$
\[
\varphi=2\partial_{x}^{2}\frac{\eta}{\iota_{2}}.
\]
Hence we get a corresponding solution $\eta$ of the linearized bilinear KP-I
equation, with the form
\[
\eta=\iota_{2}\partial_{x}^{-2}\frac{\varphi}{2}.
\]
Here $\partial_{x}^{-1}:=\int_{-\infty}^{x}.$ Observe that as $\left\vert
x\right\vert \rightarrow\infty$, $\iota_{2}=O\left(  e^{k\left\vert
x\right\vert }\right)  .$ Due to the estimate of $\varphi,$ we conclude that
$\eta\leq C,$ for $x\leq0.$ The next step is to solve the linearized
B\"{a}cklund transformation between $\iota_{1},\iota_{2}.$

\begin{lemma}
\label{Lemma1}\bigskip Suppose $\eta$ is a function solving the linearized
bilinear KP-I equation at $\iota_{2}:$%
\[
-D_{x}^{2}\eta\cdot\iota_{2}+D_{x}^{4}\eta\cdot\iota_{2}=D_{y}^{2}\eta
\cdot\iota_{2}.
\]
Assume $\eta$ is $\frac{2\pi}{kb}$-periodic in $y$ and satisfies $\left\vert
\eta\left(  x,y\right)  \right\vert \leq\left(  1+\left\vert x\right\vert
\right)  e^{k\left\vert x\right\vert },$ and
\[
\left\vert \eta\left(  x,y\right)  \right\vert \leq C,\text{ for }x\leq0.
\]
Then the system $\left(  \ref{B2}\right)  $ has a solution $\phi,$ $\frac
{4\pi}{kb}$-periodic in $y,$ with
\[
\left\vert \phi\left(  x,y\right)  \right\vert \leq C,\text{ for }x\leq0.
\]
Moreover, for $x$ large, $\phi$ can be written as a linear combination of
$e^{\frac{3}{2}kx}$ and a function $\phi_{0}^{\ast}$ with
\[
\left\vert \phi_{0}^{\ast}\right\vert \leq C\left(  1+\left\vert x\right\vert
\right)  e^{k\left\vert x\right\vert }.
\]
Additionally, $\phi$ satisfies the linearized bilinear KP-I equation at
$\iota_{1}$%
\[
-D_{x}^{2}\phi\cdot\iota_{1}+D_{x}^{4}\phi\cdot\iota_{1}=D_{y}^{2}\phi
\cdot\iota_{1}.
\]

\end{lemma}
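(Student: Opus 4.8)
The plan is to imitate the proofs of Propositions~\ref{P1} and~\ref{P}, with $(\tau_0,\tau_1)$ replaced by $(\iota_1,\iota_2)$; the new feature is that the relevant homogeneous equation has no explicit solutions, so one must work with the indicial data of Lemma~\ref{Beta}.

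For each fixed $y$, solving the first equation of~\eqref{B2} for $\partial_y\phi$ and substituting into the second reduces~\eqref{B2} to the scalar third order inhomogeneous ODE~\eqref{tau2}, whose homogeneous version~\eqref{t2} admits $\iota_1,\beta_1,\beta_2$ as a fundamental system. Using the variation of parameters formula together with the Wronskian of $\beta_1,\beta_2$ — which behaves like $e^{-(\frac{3k}{2}+\frac{3}{2}\mu^{\ast})x}$ as $x\to-\infty$ — I would write down a particular solution $\tilde w_0$ of~\eqref{tau2}, choosing for each term of the right hand side the endpoint of integration ($-\infty$ or $+\infty$) so that, by the hypotheses $|\eta|\le C$ on $\{x\le 0\}$ and $|\eta|\le(1+|x|)e^{k|x|}$ and the asymptotics of $\iota_1,\beta_1,\beta_2$, one obtains $|\tilde w_0|\le C$ for $x\le 0$ and, for $x$ large, $\tilde w_0$ equal to a multiple of $e^{\frac{3}{2}kx}$ plus a remainder bounded by $C(1+|x|)e^{k|x|}$. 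As in the remark after Lemma~\ref{doe3}, one must also check that the apparent singularities of the integrand at the zeros of $\iota_1$ are removable, so that $\tilde w_0$ is genuinely smooth.

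Next set $\phi=\tilde w_0+\tilde w_1$ with $\tilde w_1=\gamma_0(y)\iota_1+\gamma_1(y)\beta_1+\gamma_2(y)\beta_2$, let $\Phi_0^{\ast}$ be the function introduced just before Proposition~\ref{P5}, and put $\Phi_1^{\ast}=\partial_x\Phi_0^{\ast}$, $\Phi_2^{\ast}=\partial_x^2\Phi_0^{\ast}$. Since $\tilde w_0$ already satisfies~\eqref{tau2}, the functions $\gamma_0,\gamma_1,\gamma_2$ can be chosen — as in Lemmas~\ref{initial} and~\ref{beta}, by solving for each $y$ a first order linear ODE whose coefficient matrix is the invertible Wronskian-type matrix of $\iota_1,\beta_1,\beta_2$ evaluated at $x=0$, with zero initial data at $y=0$ — so that $\Phi_0^{\ast}(0,y)=\Phi_1^{\ast}(0,y)=\Phi_2^{\ast}(0,y)=0$ for every $y$. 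Proposition~\ref{P5} then shows that $(\Phi_0^{\ast},\Phi_1^{\ast},\Phi_2^{\ast})$ satisfies a closed first order linear ODE system in $x$, so by uniqueness it vanishes identically; this gives the first equation of~\eqref{B2}, and the second equation follows from~\eqref{tau2} once the first is known, exactly as in Lemma~\ref{w0w1}. Linearizing the bilinear identity~\eqref{back} (with parameters $\mu^{\ast},\lambda$) then yields that $\phi$ solves the linearized bilinear KP-I equation at $\iota_1$.

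Finally, as in Lemmas~\ref{ro} and~\ref{ro1}, dividing $\Phi_0^{\ast}=0$ by the fastest growing homogeneous solution and letting $x\to-\infty$ — using the bound $|\eta|\le C$ on $\{x\le0\}$ and the asymptotic analysis preceding Proposition~\ref{P5}, according to which $\exp(-\tfrac{kb}{2}iy)\,\beta_1$ approximately solves~\eqref{hB2} as $x\to-\infty$ — forces $\gamma_1=\gamma_2=0$ and $\gamma_0(y)$ to be a bounded multiple of $e^{-\frac{kb}{2}iy}$, so that $\phi=\tilde w_0+\gamma_0(y)\iota_1$. This yields $|\phi|\le C$ for $x\le 0$ and the stated decomposition of $\phi$ for $x$ large, and since $\iota_1$ and $e^{-\frac{kb}{2}iy}$ are $\tfrac{4\pi}{kb}$-periodic in $y$ while the construction is covariant under $y$-translations, $\phi$ is $\tfrac{4\pi}{kb}$-periodic. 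The main obstacle is the first step: lacking explicit homogeneous solutions, one must extract the growth of $\tilde w_0$ purely from the indicial analysis and Wronskian asymptotics of Lemma~\ref{Beta}, choosing the integration endpoints so that $\tilde w_0$ stays bounded as $x\to-\infty$ without introducing growth faster than $e^{\frac{3}{2}kx}$ at $+\infty$, all while keeping $\tilde w_0$ smooth across the zeros of $\iota_1$.
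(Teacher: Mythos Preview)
Your approach is essentially the paper's: reduce \eqref{B2} to the third order ODE \eqref{tau2}, build a particular solution by variation of parameters using the fundamental system $\iota_1,\beta_1,\beta_2$ and the indicial data of Lemma~\ref{Beta}, correct by $\gamma_0\iota_1+\gamma_1\beta_1+\gamma_2\beta_2$ so that $\Phi_0^\ast,\Phi_1^\ast,\Phi_2^\ast$ vanish on a vertical line, invoke Proposition~\ref{P5} to propagate, and then kill the $\gamma_i$ by dividing $\Phi_0^\ast=0$ by the fastest growing homogeneous solution and sending $x\to-\infty$. The paper in fact integrates from $-\infty$ in all terms of the variation-of-parameters formula (your option of mixing endpoints is unnecessary), and obtains the periodicity of $\phi$ simply from that of $\eta$ and $\iota_1$.

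One point needs tightening. You conclude only $\gamma_1=\gamma_2=0$ and leave $\gamma_0$ as a possibly nonzero multiple of $e^{-\frac{kb}{2}iy}$; but $\iota_1$ itself grows like $e^{\frac{k}{2}|x|}$ as $x\to-\infty$ (its indicial exponent is $\alpha_0^-=-\tfrac{k}{2}$), so a nonzero $\gamma_0(y)\iota_1$ would violate the claimed bound $|\phi|\le C$ on $\{x\le 0\}$. The paper resolves this by the same division argument applied once more: since all three homogeneous solutions blow up as $x\to-\infty$ while the particular solution stays bounded there, one obtains $\rho=\rho_1=\rho_2=0$ outright. Equivalently, your initial condition $\gamma_0(0)=0$ together with $\gamma_0(y)=c\,e^{-\frac{kb}{2}iy}$ already forces $c=0$; either way, $\phi=\tilde w_0$ and the bounds follow. (Also note that the asymptotic computation before Proposition~\ref{P5} concerns $\beta_1$, not $\iota_1$; the function $\iota_1$ solves the homogeneous system \eqref{hB2} exactly, not just asymptotically.)
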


\begin{proof}
The proof is similar to the case of the linearized Backlund transformation
between $\tau_{1},\tau_{2}.$ The only difference is the analysis of the
asymptotic behavior at infinity, which follows from the asymptotic behavior of
the fundamental solutions $\beta_{1},\beta_{2},$ and $\iota_{1}$, of the
homogenous equation of the third order ODE $\left(  \ref{eta}\right)  ,$ given
by Lemma \ref{Beta}. We use $W$ to denote the Wronskian of $\beta_{1}%
,\beta_{2}.$ As $x\rightarrow-\infty,$ $W$ behaves like $e^{-\frac{3}%
{2}\left(  k+\mu^{\ast}\right)  }.$

For each fixed $y,$ the inhomogeneous equation we need to solve is
\begin{align*}
&  4\iota_{2}\phi^{\prime\prime\prime}+\left(  6\mu^{\ast}\iota_{2}%
-12\partial_{x}\iota_{2}\right)  \phi^{\prime\prime}+\left(  \left(
3\mu^{\ast2}-1\right)  \iota_{2}-12\mu^{\ast}\partial_{x}\iota_{2}%
+12\frac{\left(  \partial_{x}\iota_{2}\right)  ^{2}}{\iota_{2}}\right)
\phi^{\prime}+B\phi\\
&  =\left(  \mathcal{N}_{2}\eta+\frac{6\partial_{x}\tau_{2}}{\tau_{2}%
}\mathcal{G}_{2}\eta+3\partial_{x}\left(  \mathcal{G}_{2}\eta\right)
+3\mu\mathcal{G}_{2}\eta\right)  .
\end{align*}
Let us define
\[
M:=\frac{1}{4\iota_{2}}\left(  \mathcal{N}_{2}\eta+\frac{6\partial_{x}\tau
_{2}}{\tau_{2}}\mathcal{G}_{2}\eta+3\partial_{x}\left(  \mathcal{G}_{2}%
\eta\right)  +3\mu\mathcal{G}_{2}\eta\right)  .
\]
This equation has a solution of the form%
\begin{equation}
\phi_{0}\left(  x,y\right)  :=\iota_{1}\int_{-\infty}^{x}\left[  \beta_{2}%
\int_{-\infty}^{s}\left(  \frac{\beta_{1}}{W}M\right)  -\beta_{1}\int%
_{-\infty}^{s}\left(  \frac{\beta_{2}}{W}M\right)  \right]  ds. \label{f0}%
\end{equation}
Using the estimate of $\eta$, we obtain%
\[
\left\vert \mathcal{N}_{2}\eta\right\vert \leq Ce^{\frac{k}{2}\left\vert
x\right\vert }\left(  1+\left\vert x\right\vert \right)  e^{k\left\vert
x\right\vert },
\]
and for $x\leq0,$ $\left\vert \mathcal{N}_{2}\eta\right\vert \leq Ce^{\frac
{k}{2}\left\vert x\right\vert }.$ The term $\mathcal{G}_{2}\eta$ satisfies
similar estimates. From these bounds and $\left(  \ref{f0}\right)  ,$ we infer
that for $x\leq0,$
\[
\left\vert \phi_{0}\right\vert \leq C.
\]

The next step is to find a solution $\phi$ satisfying the first equation of
$\left(  \ref{B2}\right)  $ \ on one line parallel to the $y$ axis, say $x=0.$
We seek a solution of the form
\[
\phi=\phi_{0}+\rho\left(  y\right)  \iota_{1}+\rho_{1}\left(  y\right)
\beta_{1}+\rho_{2}\left(  y\right)  \beta_{2},
\]
with the initial condition $\rho\left(  0\right)  =\rho_{1}\left(  0\right)
=\rho_{2}\left(  0\right)  =0.$ Applying Proposition \ref{P5}, we conclude
that $\phi$ solves the whole system $\left(  \ref{B2}\right)  .$ Using the
asymptotic behavior of $\iota_{1},\beta_{1},\beta_{2}$ at $-\infty$(observe
that $\alpha_{1}^{-}=-\frac{3}{2}k<0$ and $\alpha_{2}^{-}=-k-\frac{3}{2}%
\mu^{\ast}<0$), we infer that $\rho_{i}\left(  y\right)  =0,i=1,2,3.$
Moreover, for $x$ positive and large, in view of the growth rate of $\beta
_{1},\beta_{2},\iota_{1},$ as $x\rightarrow+\infty,$ $\phi_{0}$ can be written
as a linear combination of $e^{\frac{3}{2}kx}$ and a function $\phi_{0}^{\ast
}$ satisfying
\begin{equation}
\left\vert \phi_{0}^{\ast}\right\vert \leq C\left(  1+\left\vert x\right\vert
\right)  e^{k\left\vert x\right\vert }. \label{fi0}%
\end{equation}

Finally we remark that the function $\phi_{0}$ is $\frac{4\pi}{kb}$-periodic
in $y,$ since $\eta$ is $\frac{2\pi}{kb}$-periodic and $\iota_{1}$ is
$\frac{4\pi}{kb}$-periodic.
\end{proof}

We also need to solve the linearized B\"{a}cklund transformation between
$\iota_{0}$ and $\iota_{1}.$ A parallel result of Lemma \ref{Lemma1} is

\begin{lemma}
\label{Lemma2}\bigskip Suppose $\phi$ is a function solving the linearized
bilinear KP-I equation at $\iota_{1}$ obtained from Lemma \ref{Lemma1}. Let
$\xi_{1},\xi_{2}$ be the functions defined in Lemma \ref{doe3}. Then the
system
\[
\left\{
\begin{array}
[c]{l}%
\left(  D_{x}^{2}+\mu D_{x}+\frac{1}{\sqrt{3}}iD_{y}\right)  \xi\cdot\iota
_{1}-\lambda\xi\iota_{1}=\mathcal{G}_{1}\phi,\\
\left(  -D_{x}+3\lambda D_{x}-\sqrt{3}\mu iD_{y}+D_{x}^{3}-\sqrt{3}iD_{x}%
D_{y}-\frac{3k^{2}\mu}{4}\right)  \xi\cdot\iota_{1}=\mathcal{N}_{1}\phi,
\end{array}
\right.
\]
has a solution $\xi,$ $\frac{4\pi}{kb}$-periodic in $y,$ with
\begin{equation}
\left\vert \xi\left(  x,y\right)  -C_{1}\left(  y\right)  e^{\frac{3}{2}%
kx}-C_{2}e^{\frac{k-3\mu}{2}x+\frac{\sqrt{3}\left(  1-2k^{2}\right)  }{4}%
iy}\right\vert \leq C\left(  1+x^{2}\right)  e^{k\left\vert x\right\vert },
\label{zi}%
\end{equation}
for some constants $C_{1}\left(  y\right)  ,C_{2},$ where $C_{2}$ is
independent of $y,$ and
\begin{equation}
\left\vert \xi\left(  x,y\right)  \right\vert \leq e^{-\frac{k}{2}\left\vert
x\right\vert },\text{ for }x\leq0. \label{negative}%
\end{equation}
Moreover $\xi$ satisfies the linearized KP-I equation at $\iota_{0}$:
\[
-D_{x}^{2}\xi\cdot\iota_{0}+D_{x}^{4}\xi\cdot\iota_{0}=D_{y}^{2}\xi\cdot
\iota_{0}.
\]

\end{lemma}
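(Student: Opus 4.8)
The plan is to mirror the proof of Lemma \ref{Lemma1}, now for the Bäcklund transformation between $\iota_0 = 1$ and $\iota_1$, using the explicit fundamental solutions $\xi_1,\xi_2$ from Lemma \ref{doe3} and the third-order ODE identity of Proposition \ref{third order ode}. First, I would write the linearized system as a third-order inhomogeneous ODE in $x$ for the unknown $\xi$ (for each fixed $y$), exactly as equation \eqref{inh}, with right-hand side built from $\mathcal{G}_1\phi$ and $\mathcal{N}_1\phi$. By Lemma \ref{doe3} the homogeneous equation \eqref{tau1} has explicit solutions $\xi_1\sim e^{kx}$, $\xi_2\sim e^{\frac{k-3\mu}{2}x}$ as $x\to+\infty$ and $\xi_1\sim e^{-kx}$, $\xi_2\sim e^{-\frac{k+3\mu}{2}x}$ as $x\to-\infty$, with Wronskian $W$ behaving like $e^{-\frac{3k}{2}-\frac32\mu}$; I would use the variation-of-parameters formula based at $x=-\infty$ to produce a particular solution $\xi_0$ of the inhomogeneous equation, integrating from $-\infty$ since $\phi$ (hence $\mathcal{G}_1\phi,\mathcal{N}_1\phi$) is controlled by $C$ for $x\le 0$ and by $C(1+\lvert x\rvert)e^{k\lvert x\rvert}$ together with the $e^{\frac32 kx}$ term for $x$ large. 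Plugging the input bounds on $\phi$ from Lemma \ref{Lemma1} into this formula gives $\lvert\xi_0\rvert\le e^{-\frac{k}{2}\lvert x\rvert}$ for $x\le 0$, which is \eqref{negative}, and the estimate \eqref{zi} for $x$ large after extracting the leading terms proportional to $\xi_1\sim e^{\frac32 kx}$ (note $\frac32 k$ comes from the product of the $e^{kx}$ growth of $\xi_1$ and the extra decay coming from $\phi_0^\ast$; one has to track carefully which exponents survive) and to $e^{\frac{k-3\mu}{2}x+\frac{\sqrt3(1-2k^2)}{4}iy}$, which is the asymptotic solution of \eqref{first} already identified before the Remark following Proposition \ref{third order ode}.

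Next, to upgrade from solving just the third-order ODE to solving the full first-order system, I would follow the device from Section \ref{Sec3}: set $\Phi_0 := L\xi - G$ where $L,G$ are the bilinear operators appearing on the two sides of the first equation of the system, and $\Phi_1=\partial_x\Phi_0$, $\Phi_2=\partial_x^2\Phi_0$. Proposition \ref{third order ode} shows that $(\Phi_0,\Phi_1,\Phi_2)$ satisfies a closed linear first-order ODE system in $x$. I would then correct $\xi_0$ by adding $\rho(y)\iota_1 + \rho_1(y)\xi_1 + \rho_2(y)\xi_2$ (these span the kernel of the homogeneous first-order system on the line $x=0$, as in Lemma \ref{ki}), choose $\rho,\rho_1,\rho_2$ with $\rho(0)=\rho_1(0)=\rho_2(0)=0$ so that $\Phi_0=\Phi_1=\Phi_2=0$ at $x=0$ (an ODE in $y$, solved exactly as in Lemma \ref{initial}), and then invoke uniqueness for the $\Phi$-system in $x$ to conclude $\Phi_0\equiv 0$, i.e. the first equation holds everywhere; the second equation of the system then follows from the bilinear identity \eqref{back}, and likewise the linearized bilinear KP-I equation at $\iota_0$. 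Finally, to kill the spurious homogeneous components I would argue as in Lemma \ref{ro}: dividing $\Phi_0=0$ by $\xi_2$ (respectively $\xi_1$) and letting $x\to-\infty$, the decay $\lvert\xi\rvert\le e^{-\frac{k}{2}\lvert x\rvert}$ for $x\le 0$ forces $\rho_2'=0$, hence $\rho_2\equiv 0$, and similarly $\rho_1\equiv 0$; one subtlety is that, unlike the $\tau_0,\tau_1$ case, there is no closed-form solution for the inhomogeneous integral, so the decay estimates must be obtained purely from the asymptotics of $\xi_1,\xi_2,W$ and of $\mathcal{G}_1\phi,\mathcal{N}_1\phi$. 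The periodicity claim is immediate: $\xi_1,\xi_2$ are $\tfrac{2\pi}{kb}$-periodic in $y$ and $\iota_1$ is $\tfrac{4\pi}{kb}$-periodic, while $\phi$ from Lemma \ref{Lemma1} is $\tfrac{4\pi}{kb}$-periodic, so $\xi$ is $\tfrac{4\pi}{kb}$-periodic in $y$.

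The main obstacle I anticipate is the bookkeeping of growth rates as $x\to+\infty$ needed to pin down \eqref{zi}. Because $\xi_1$ itself grows like $e^{kx}$ and the inhomogeneity carries a piece growing like $e^{\frac32 kx}$ (inherited from the $e^{\frac32 kx}$ component of $\phi$ in Lemma \ref{Lemma1}), one must verify that the variation-of-parameters integrals do not produce an uncontrolled term worse than $C(1+x^2)e^{k\lvert x\rvert}$ after subtracting the two explicit exponential profiles; in particular the $(1+x^2)$ polynomial factor (rather than $(1+\lvert x\rvert)$) appears precisely because integrating an $e^{k s}$-type source against $\xi_1/W \sim e^{kx}\cdot e^{\frac{3k}{2}+\frac32\mu}$ can generate a linear-in-$x$ factor at the resonant exponent, and this accumulated once through $\phi$ and once through $\xi$. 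This is routine but error-prone, and is the only place where the argument genuinely differs from the non-periodic $\tau_1,\tau_2$ computation; everything else is a direct transcription of Sections \ref{Sec3}–\ref{Sec4} with $\tau_i$ replaced by $\iota_i$ and with exponential (rather than polynomial) weights.
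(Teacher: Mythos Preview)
Your overall strategy is exactly the one the paper uses: its proof of Lemma~\ref{Lemma2} is literally ``This follows from same arguments as in Lemma~\ref{Lemma1}, using variation of parameter formula. We omit the details.'' So your plan to transcribe the $\iota_1\to\iota_2$ argument (and ultimately the $\tau_0,\tau_1$ argument of Section~\ref{Sec3}) with exponential weights in place of polynomial ones is the intended approach, and your discussion of where the $(1+x^2)$ factor and the two explicit exponential profiles in \eqref{zi} come from is on target.

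There is, however, a concrete slip in your correction step. You propose to add $\rho(y)\iota_1+\rho_1(y)\xi_1+\rho_2(y)\xi_2$ to the particular solution. But in the $\iota_0\to\iota_1$ transformation the relevant homogeneous third-order ODE is \eqref{inh} (with zero right-hand side), whose solutions are the constant $1$ together with the antiderivatives $\partial_x^{-1}\xi_1$ and $\partial_x^{-1}\xi_2$, since $\xi_1,\xi_2$ from Lemma~\ref{doe3} solve the second-order equation \eqref{tau1} for $g=\phi'$. The function $\iota_1$ is \emph{not} in this kernel; it solves the other third-order ODE \eqref{t2} associated to the $\iota_1\to\iota_2$ step. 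With your choice, $\xi_0+\rho\iota_1+\rho_1\xi_1+\rho_2\xi_2$ fails to satisfy the third-order ODE, so the $(\Phi_0,\Phi_1,\Phi_2)$ uniqueness argument via Proposition~\ref{third order ode} does not apply. The fix is immediate: replace the correction by $\rho_0(y)\cdot 1+\rho_1(y)\,\partial_x^{-1}\xi_1+\rho_2(y)\,\partial_x^{-1}\xi_2$, exactly parallel to Lemma~\ref{ki} and to the use of $\iota_1,\beta_1,\beta_2$ in the proof of Lemma~\ref{Lemma1}. After this correction your Lemma~\ref{ro}-type argument (dividing by the fastest-growing kernel element and sending $x\to-\infty$) goes through with the asymptotics of $\partial_x^{-1}\xi_i$ you already recorded.
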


\begin{proof}
This follows from same arguments as in Lemma \ref{Lemma1}, using variation of
parameter formula. We omit the details.
\end{proof}

In the next step, for each given function $\xi,$ we would like to solve the
reversed linearized Backlund transformation system
\begin{equation}
\left\{
\begin{array}
[c]{l}%
\left(  D_{x}^{2}+\mu D_{x}+\frac{1}{\sqrt{3}}iD_{y}\right)  \iota_{0}%
\cdot\eta-\lambda\iota_{0}\eta=-P\xi,\\
\left(  -D_{x}+3\lambda D_{x}-\sqrt{3}\mu iD_{y}+D_{x}^{3}-\sqrt{3}iD_{x}%
D_{y}-\frac{3k^{2}\mu}{4}\right)  \iota_{0}\cdot\eta=-S\xi.
\end{array}
\right.  \label{reverse}%
\end{equation}
where
\begin{equation}
\left\{
\begin{array}
[c]{l}%
P\xi=\left(  D_{x}^{2}+\mu D_{x}+\frac{1}{\sqrt{3}}iD_{y}\right)  \xi
\cdot\iota_{1}+\lambda\xi\iota_{1},\\
S\xi=\left(  \left(  3\lambda-1\right)  D_{x}-\sqrt{3}\mu iD_{y}+D_{x}%
^{3}-\sqrt{3}iD_{x}D_{y}-\frac{3k^{2}\mu}{4}\right)  \xi\cdot\iota_{1}.
\end{array}
\right.  \label{ng}%
\end{equation}
The first equation of $\left(  \ref{reverse}\right)  $ can be written as
\[
\partial_{x}^{2}\eta-\mu\partial_{x}\eta+\frac{i}{\sqrt{3}}\left(
-\partial_{y}\eta\right)  -\lambda\eta=-P\xi.
\]
Inserting this into the second equation, we get%
\[
-4\partial_{x}^{3}\eta+6\mu\partial_{x}^{2}\eta+k^{2}\partial_{x}\eta
-3\mu\lambda\eta-\frac{3k^{2}\mu}{4}\eta+3\mu P\xi-3\partial_{x}\left(
P\xi\right)  +S\xi=0.
\]
The characteristic equation of the ODE%
\begin{equation}
-4\partial_{x}^{3}\eta+6\mu\partial_{x}^{2}\eta+k^{2}\partial_{x}\eta
-3\mu\lambda\eta-\frac{3k^{2}\mu}{4}\eta=0 \label{ODE}%
\end{equation}
is
\[
-4t^{3}-2\sqrt{3}\sqrt{1-k^{2}}t^{2}+k^{2}t+\frac{\sqrt{3}k^{2}\sqrt{1-k^{2}}%
}{2}=0.
\]
Hence the indicial roots of the equation $\left(  \ref{ODE}\right)  $ are
$-\frac{1}{2}k,\frac{1}{2}k,\frac{3\mu}{2}.$

\begin{lemma}
\label{NG}Let $P\xi,S\xi$ be defined in $\left(  \ref{ng}\right)  $. If
$\xi=e^{k\left(  x-py\right)  }.$ Then
\begin{align*}
&  3\mu P\xi-3\partial_{x}\left(  P\xi\right)  +S\xi\\
&  =-\frac{k^{2}}{8}\left(  3ke^{\frac{3}{2}\left(  k\left(  x-py\right)
\right)  }+\left(  9k-96\mu\right)  e^{\frac{1}{2}\left(  k\left(
x-py\right)  \right)  }\right)  .
\end{align*}
On the other hand, if $\xi=e^{k\left(  x+py\right)  },$ then
\begin{align*}
&  3\mu P\xi-3\partial_{x}\left(  P\xi\right)  +S\xi\\
&  =-\frac{k^{2}}{8}\left(  \left(  3k+96\mu\right)  e^{\frac{k}{2}\left(
3x+py\right)  }+9ke^{\frac{k}{2}\left(  x+3py\right)  }\right)  .
\end{align*}

\end{lemma}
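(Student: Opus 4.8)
The plan is to prove both identities by direct substitution, exploiting that $\xi$ and $\iota_{1}$ are each a finite sum of exponentials $e^{ax+by}$, so that every bilinear derivative occurring in $P\xi$ and $S\xi$ collapses onto a fixed pair of exponential modes.

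Recall $\iota_{1}=r\,e^{k(x-py)/2}+e^{-k(x-py)/2}$, and note the elementary rule $D_{x}^{m}D_{y}^{n}\,e^{a_{1}x+b_{1}y}\cdot e^{a_{2}x+b_{2}y}=(a_{1}-a_{2})^{m}(b_{1}-b_{2})^{n}\,e^{(a_{1}+a_{2})x+(b_{1}+b_{2})y}$, together with the fact that a constant $c$ appearing in a bilinear operator acts as multiplication by $c$ on the ordinary product. For $\xi=e^{k(x-py)}$ this shows that each term $D_{x}^{m}D_{y}^{n}\xi\cdot\iota_{1}$, and the ordinary product $\xi\iota_{1}$, is a linear combination of the two modes $e^{3k(x-py)/2}$ and $e^{k(x-py)/2}$ with coefficients that are explicit polynomials in $k,p,r,\mu,\lambda$. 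First I would tabulate, for each of these two modes, the contribution of every piece entering $P\xi$, $\partial_{x}(P\xi)$ and $S\xi$: the ordinary product and the bilinear derivatives $D_{x}$, $D_{y}$, $D_{x}^{2}$, $D_{x}D_{y}$, $D_{x}^{3}$ applied to $\xi\cdot\iota_{1}$. Then I would assemble $3\mu\,P\xi-3\,\partial_{x}(P\xi)+S\xi$ and collect the coefficient of each mode.

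The next step is to simplify the two resulting coefficients using the defining relations of the parameters: $\lambda=k^{2}/4$, $\mu=pi/\sqrt{3}$, $p=\sqrt{1-k^{2}}\,i$ (hence the useful identities $\sqrt{3}\,ip=3\mu$ and $3\mu^{2}=1-k^{2}$), together with the value of $r$ — the same value that makes the expression $(\ref{rk})$ vanish in the nonlinear B\"acklund identity $(\ref{iota1})$. Substituting these relations consistently should collapse the two coefficients to the closed form displayed in the statement. The second identity, for $\xi=e^{k(x+py)}$, follows by the identical computation; here the two modes that appear are $e^{k(3x+py)/2}$ and $e^{k(x+3py)/2}$, and the same substitutions produce the stated right-hand side.

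There is no conceptual obstacle: this is a finite algebraic verification of exactly the same flavour as the computer-checked identities elsewhere in the paper, for instance Propositions $\ref{p1}$, $\ref{P4}$, $\ref{third order ode}$ and $\ref{P5}$. The main obstacle is purely bookkeeping — correctly tracking which exponential mode each bilinear derivative feeds into (in particular that $\partial_{x}$ acts on a pure mode by multiplication by its exponent), keeping the operator parameter $\mu$ carefully distinct from the parameter $\mu^{\ast}$ that enters the definitions of $\iota_{1}$ and $r$, and substituting the parameter relations so that the $r$- and $A$-dependence resolves into the displayed form — so I would carry out the final simplification with a symbolic algebra package, as the paper does elsewhere.
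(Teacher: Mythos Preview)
Your plan is correct and is essentially the same direct-substitution argument the paper gives. The only organizational difference is that the paper first rewrites the combination $-3\mu P\xi+3\partial_{x}(P\xi)-S\xi$ as an explicit sum of products of ordinary partial derivatives of $\xi$ and of $\iota_{1}$ (so several cross terms cancel before any exponentials are inserted), and only then substitutes $\xi$; your mode-by-mode bilinear computation reaches the same two exponentials with the same coefficients, just with slightly more bookkeeping along the way.
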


\begin{proof}
Taking derivatives in $x$ for $P\xi,$ we get%
\begin{align*}
\partial_{x}\left(  P\xi\right)   &  =\iota_{1}\partial_{x}^{3}\xi+\left(
-\partial_{x}\iota_{1}+\mu\iota_{1}\right)  \partial_{x}^{2}\xi+\frac{i}%
{\sqrt{3}}\iota_{1}\partial_{x}\partial_{y}\xi+\frac{i}{\sqrt{3}}\partial
_{x}\iota_{1}\partial_{y}\xi\\
&  +\left(  -\partial_{x}^{2}\iota_{1}-\frac{i}{\sqrt{3}}\partial_{y}\iota
_{1}-\lambda\iota_{1}\right)  \partial_{x}\xi\\
&  +\left(  \partial_{x}^{3}\iota_{1}-\mu\partial_{x}^{2}\iota_{1}-\frac
{i}{\sqrt{3}}\partial_{x}\partial_{y}\iota_{1}-\lambda\partial_{x}\iota
_{1}\right)  \xi.
\end{align*}
Therefore, $-3\mu P\xi+3\partial_{x}\left(  P\xi\right)  -S\xi$ is equal to
\begin{align*}
2\iota_{1}\partial_{x}^{3}\xi &  +\left(  2\sqrt{3}i\iota_{1}\right)
\partial_{x}\partial_{y}\xi+\left(  -6\partial_{x}^{2}\iota_{1}+6\mu
\partial_{x}\iota_{1}+\lambda\iota_{1}-2\sqrt{3}i\partial_{y}\iota_{1}\right)
\partial_{x}\xi\\
&  +\left(  4\partial_{x}^{3}\iota_{1}-6\mu\partial_{x}^{2}\iota_{1}%
-\frac{7k^{2}}{4}\partial_{x}\iota_{1}+\frac{3k^{2}}{2}\mu\iota_{1}\right)
\xi.
\end{align*}
Direct substitution of explicit formula of $\xi$ into this expression gives us
the desired results.
\end{proof}

\begin{remark}
We already know that $\exp\left(  \frac{kb}{2}yi\right)  e^{kx}$ approximately
solves the equation $\left(  \ref{first}\right)  .$ This actually already
implies that in Lemma $\ref{NG},$ the $O\left(  e^{\frac{3}{2}kx}\right)  $
term does not vanish.
\end{remark}

With all these preparations, we now can prove

\begin{theorem}
\label{nonde}The solution $T_{2}:=2\partial_{x}^{2}\iota_{2}$ is
nondegenerate in the following sense: Suppose $\varphi$ is a function
decaying in the $x$ variable and $\frac{2\pi}{kb}$-periodic in the $y$
variable, satisfying
\[
\partial_{x}^{2}\left(  \partial_{x}^{2}\varphi-\varphi+6T_{2}\varphi\right)
-\partial_{y}^{2}\varphi=0.
\]
Then $\varphi=c_{1}\partial_{x}T_{2}+c_{2}\partial_{y}T_{2}.$
\end{theorem}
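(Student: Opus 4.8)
The plan is to repeat, in the $\frac{2\pi}{kb}$-periodic class, the three–step descent through linearized B\"acklund transformations used for Theorem~\ref{main}, replacing $(\tau_0,\tau_1,\tau_2)$ by $(\iota_0,\iota_1,\iota_2)$. First I would invoke Lemma~\ref{kernel} to upgrade the decay hypothesis to the exponential bound $|\varphi(x,y)|\le Ce^{-k|x|}$. Setting $\eta=\iota_2\,\partial_x^{-2}(\varphi/2)$ with $\partial_x^{-1}=\int_{-\infty}^x$ produces a $\frac{2\pi}{kb}$-periodic solution of the linearized bilinear KP-I equation at $\iota_2$; since $\iota_2=2(\cosh kx+A\cos kby)>0$ and $\iota_2=O(e^{k|x|})$, the bound on $\varphi$ gives $|\eta|\le C(1+|x|)e^{k|x|}$ globally together with $|\eta|\le C$ for $x\le 0$ — exactly the hypotheses of Lemma~\ref{Lemma1}.

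Next, Lemma~\ref{Lemma1} yields $\phi$, $\frac{4\pi}{kb}$-periodic, solving the linearized equation at $\iota_1$, bounded for $x\le0$ and with the stated structure for $x$ large; then Lemma~\ref{Lemma2} yields $\xi$, $\frac{4\pi}{kb}$-periodic, solving the linearized equation at $\iota_0=1$, i.e. $\partial_x^4\xi-\partial_x^2\xi-\partial_y^2\xi=0$, with the asymptotics (\ref{zi})--(\ref{negative}). As in Theorem~\ref{main}, one keeps the degenerate branches in mind: if $\mathcal{G}_2\eta=\mathcal{N}_2\eta=0$, or later $\mathcal{G}_1\phi=\mathcal{N}_1\phi=0$, then the relevant function is pinned down directly by arguments analogous to Lemmas~\ref{homo} and \ref{L4} adapted to the parameters $\mu,\lambda$ (using the homogeneous solutions of Lemma~\ref{doe3}), and in all such cases it pushes forward to a combination of $\partial_x\iota_2,\partial_y\iota_2$ and $\iota_2$; otherwise one descends all the way to $\iota_0$.

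The heart of the proof is the classification of $\xi$ at the trivial level. I would expand $\xi$ in a Fourier series in $y$ with base frequency $\omega=\frac{kb}{2}$; the $n$-th coefficient $a_n(x)$ solves the constant-coefficient ODE $a_n''''-a_n''+n^2\omega^2 a_n=0$, whose indicial roots are $\pm\sqrt{\tfrac{1\pm\sqrt{1-n^2k^2b^2}}{2}}$, equal to $\{0,0,1,-1\}$ for $n=0$ and to $\{\pm b,\pm k\}$ for $|n|=2$ (using $1-4k^2b^2=(1-2k^2)^2$). The key observation is that neither $\tfrac{3k}{2}$ nor $\tfrac{k-3\mu}{2}$ is an indicial root of this operator at any mode for $k\in(0,\tfrac12)$, so the two exotic terms in (\ref{zi}) must have vanishing coefficients (this is the periodic incarnation of the incompatible-period phenomenon noted after Lemma~\ref{doe3}; the finitely many $k$ at which a resonance could occur are dispatched separately). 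One is then left with $|\xi|\le C(1+x^2)e^{k|x|}$ for $x\ge0$ and $|\xi|\le e^{-k|x|/2}$ for $x\le0$, which forces every exponential $e^{tx}$ appearing in any $a_n$ to satisfy $\tfrac{k}{2}\le\operatorname{Re} t\le k$; a mode-by-mode check of the indicial roots (for $|n|\ge3$ the smallest positive root exceeds $k$, for $|n|=1$ it lies below $\tfrac k2$, and for $n=0$ the nonzero roots are $\pm1$) leaves only the two modes $n=\pm2$ with the simple root $t=k$, so $\xi=c_1e^{k(x+py)}+c_2e^{k(x-py)}$. These are exactly the functions whose pushforward through the step $\iota_0\to\iota_1$ is computed in Lemma~\ref{NG}; feeding them back through the reversed systems (\ref{reverse}) and then through $\iota_1\to\iota_2$ gives $\eta=d_1\partial_x\iota_2+d_2\partial_y\iota_2+d_3\iota_2$, and $\varphi=2\partial_x^2(\eta/\iota_2)$ kills the $\iota_2$-term and produces $\varphi=c_1\partial_x T_2+c_2\partial_y T_2$.

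The main obstacle, relative to the lump case, is that the third-order ODEs (\ref{t2}) and (\ref{eta}) governing the $\iota_1\leftrightarrow\iota_2$ and $\iota_0\leftrightarrow\iota_1$ steps no longer have elementary closed-form fundamental solutions, so every estimate must be obtained from indicial-root bookkeeping and matching at $x\to\pm\infty$ (Lemmas~\ref{Beta} and \ref{NG}), while the two periods $\frac{2\pi}{kb}$ and $\frac{4\pi}{kb}$ must be tracked through each transformation so that no exponential with an incompatible $y$-frequency is introduced; the (complex) zeros of $\iota_1$ must also be handled as in the remark following Lemma~\ref{doe3}. Making the mode-by-mode exclusion — and the treatment of the finitely many resonant $k$ — fully rigorous is where most of the work will lie.
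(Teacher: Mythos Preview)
Your overall strategy---descend through the linearized B\"acklund transformations from $\iota_2$ to $\iota_1$ to $\iota_0$ and classify the resulting $\xi$ at the trivial level by Fourier/indicial-root analysis---matches the paper's, and your mode-by-mode exclusion argument is correct (the paper phrases it as $\tfrac{kb}{\sqrt{\gamma^2-\gamma^4}}\notin\mathbb N$ for $\gamma\in[k/2,k)$, which singles out $n=\pm2$ with $\gamma=k$, exactly your conclusion; there are in fact no resonant $k$ needing separate treatment).

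The genuine gap is your final step. You assert that the survivors $\xi=c_1e^{k(x+py)}+c_2e^{k(x-py)}$, fed back through the reversed system~(\ref{reverse}) and then through $\iota_1\to\iota_2$, produce $\eta=d_1\partial_x\iota_2+d_2\partial_y\iota_2+d_3\iota_2$. This is false. Lemma~\ref{NG} computes the forcing in~(\ref{reverse}) for exactly these $\xi$ and exhibits a nonzero $e^{\frac{3}{2}k(x\mp py)}$ term; since $\tfrac{3k}{2}$ is not among the indicial roots $\pm\tfrac{k}{2},\tfrac{3\mu}{2}$ of~(\ref{ODE}), \emph{every} solution of the reversed system acquires a main order $a(y)e^{3kx/2}$ with $a\neq0$. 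The translation modes $\partial_x\iota_2,\partial_y\iota_2,\iota_2$ all grow like $e^{k|x|}$, so this cannot be one of them. The paper uses Lemma~\ref{NG} in the opposite direction: it carries the forced $e^{3kx/2}$ growth up through the reverse $\iota_1\to\iota_2$ step and obtains a contradiction with the a priori bound $|K|\le C(1+|x|)e^{k|x|}$ on the original kernel. That contradiction forces $c_1=c_2=0$, i.e.\ $\xi=0$; only then does one fall back on the degenerate branch ($\mathcal G_1\phi=\mathcal N_1\phi=0$, hence $\phi$ is a combination of $\iota_1$ and its translate, hence $K$ is of the claimed form). You should replace your last paragraph by this contradiction argument rather than a direct pushforward.
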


\begin{proof}
Let $K:=\iota_{2}\partial_{x}^{-2}\frac{\varphi}{2}.$ Then $K$ can be written
as $K=c_{1}\iota_{2}+c_{2}\iota_{2}x+K^{\ast},$ with $\left\vert K^{\ast
}\right\vert \leq C.$

By Lemma \ref{Lemma1} and Lemma \ref{Lemma2}, we can solve the linearized
B\"{a}cklund transformation between $\iota_{0},\iota_{1},\iota_{2},$ and get a
kernel $\xi,$ given by Lemma \ref{Lemma2}, for the linearized bilinear KP-I
operator at $\iota_{0},$ with suitable growth and decay estimate. $\xi$
satisfies
\begin{equation}
\partial_{x}^{4}\xi-\partial_{x}^{2}\xi-\partial_{y}^{2}\xi=0. \label{xi}%
\end{equation}
Since $\xi$ is $\frac{4\pi}{kb}$-periodic in $y$, we conclude that in terms of
the Fourier series expansion,
\[
\xi\left(  x,y\right)  =c_{0}+\sum\limits_{n=1}^{+\infty}\left(  c_{n}\left(
x\right)  \cos\left(  \frac{kb}{2}ny\right)  +d_{n}\left(  x\right)
\sin\left(  \frac{kb}{2}ny\right)  \right)  .
\]
Inserting this into $\left(  \ref{xi}\right)  ,$ we infer that $c_{n}$ and
$d_{n}$ behave like $\exp\left(  \gamma_{n}x\right)  ,$ as $x\rightarrow
+\infty.$ Here $\gamma_{n}$ is determined by the equation
\begin{equation}
\gamma_{n}^{4}-\gamma_{n}^{2}+\left(  \frac{kb}{2}n\right)  ^{2}=0.
\label{gamma}%
\end{equation}
The estimate $\left(  \ref{zi}\right)  $ tells us that the main order of $\xi$
is
\[
C_{1}\left(  y\right)  e^{\frac{3}{2}kx}-C_{2}e^{\frac{k-3\mu}{2}x+\frac
{\sqrt{3}\left(  1-2k^{2}\right)  }{4}iy}.
\]
This together with $\left(  \ref{gamma}\right)  $ tells us that indeed
$C_{1}\left(  y\right)  =C_{2}=0.$ Now using the estimate $\left(
\ref{negative}\right)  $ and fact that
\[
\frac{kb}{\sqrt{\gamma^{2}-\gamma^{4}}}\notin\mathbb{N},\text{ for }\gamma
\in\lbrack\frac{k}{2},k),
\]
we conclude that $\xi=\left(  c_{1}\cos\left(  kby\right)  +d_{1}\sin\left(
kby\right)  \right)  e^{kx}.$ To deal with this possibility, we now use Lemma
\ref{NG} to infer that the system $\left(  \ref{reverse}\right)  $ has a
solution $\eta$ with main order of the form
\begin{equation}
\eta\left(  x,y\right)  =a\left(  y\right)  e^{\frac{3}{2}kx},\text{ as
}x\rightarrow+\infty, \label{be}%
\end{equation}
where $a\left(  y\right)  \neq0.$ Then after we solve the reverse linearized
B\"{a}cklund transformation from $\iota_{1}$ to $\iota_{2},$ we can see that
$\left(  \ref{be}\right)  $ contradicts with the asymptotic behavior of the
kernel $K$ of the linearized bilinear KP-I at $\iota_{2}.$ This finishes the proof.
\end{proof}

\section{Morse index and orbital stability of the lump solution\label{Morse}}

Fix a $k\in\left(  0,\frac{1}{2}\right)  .$ Then $\iota_{2}$ is positive. It
is periodic in $y$, with period $t_{k}:=\frac{2\pi}{k\sqrt{1-k^{2}}}.$ We
observe that as $k\rightarrow0,$ $t_{k}\rightarrow+\infty,$ the function
$2\partial_{x}^{2}\ln\iota_{2}$ converges to the lump $Q$. On the other hand,
as $k\rightarrow\frac{1}{2},$
\[
\iota_{2}\rightarrow2\cosh\frac{x}{2}.
\]
Hence $2\partial_{x}^{2}\ln\iota_{2}$ converges to the one dimensional profile
$\frac{1}{2}\cosh^{-2}\left(  \frac{x}{2}\right)  $. Note that as
$k\rightarrow\frac{1}{2},$
\[
t_{k}\rightarrow\frac{8\sqrt{3}\pi}{3}.
\]
Let us consider the linearized operator around this one dimensional solution.

\begin{lemma}
\label{one-dimensional}The operator
\[
\mathbb{L}\eta:=-\partial_{x}^{2}\eta+\eta-3\cosh^{-2}\left(  \frac{x}%
{2}\right)  \eta+\partial_{x}^{-2}\partial_{y}^{2}\eta
\]
has exactly one negative eigenvalue in the space $L^{2}\left(  \mathbb{R}%
\times\mathbb{R}/\frac{8\sqrt{3}\pi}{3}\mathbb{Z}\right)  .$
\end{lemma}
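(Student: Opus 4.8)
The plan is to diagonalise $\mathbb{L}$ in the $y$-variable. Since $\mathbb{L}$ has $y$-independent coefficients and acts on $\tfrac{8\sqrt3\pi}{3}$-periodic functions, expand $\eta(x,y)=\sum_{n}Z_n(x)e^{i\omega_n y}$ with $\omega_n=\tfrac{\sqrt3}{4}\,n$ (the fundamental frequency is $2\pi/\tfrac{8\sqrt3\pi}{3}=\tfrac{\sqrt3}{4}$, which is exactly $kb$ at $k=\tfrac12$). On the $n$-th mode $\mathbb{L}$ acts as $\mathcal{L}_{\omega_n}Z:=\mathcal{L}_0 Z-\omega_n^2\partial_x^{-2}Z$, where $\mathcal{L}_0:=-\partial_x^2+1-3\operatorname{sech}^2(\tfrac x2)$ and $-\partial_x^{-2}$ is a nonnegative operator with $\langle(-\partial_x^{-2})Z,Z\rangle=\|\partial_x^{-1}Z\|^2$. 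The form domain is $L^2(\mathbb{R})$ for $n=0$ and $\{Z:\partial_x^{-1}Z\in L^2(\mathbb{R})\}$ (in particular mean-zero functions) for $n\neq0$, so that $\partial_x^{-1}\partial_y\eta\in L^2$. Hence the number $N_-(\mathbb L)$ of negative eigenvalues of $\mathbb{L}$, counted with multiplicity, equals $N_-(\mathcal{L}_0)+2\sum_{n\ge1}N_-(\mathcal{L}_{\omega_n})$, and it suffices to show $N_-(\mathcal{L}_0)=1$ and $N_-(\mathcal{L}_{\omega_n})=0$ for all $n\ge1$.

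The first assertion is classical: $\mathcal{L}_0$ is the reflectionless P\"oschl--Teller operator corresponding to $3=\tfrac14N(N+1)$ with $N=3$, so its point spectrum is $\{-\tfrac54,\,0,\,\tfrac34\}$ with eigenfunctions $\operatorname{sech}^3(\tfrac x2)$, $\operatorname{sech}^2(\tfrac x2)\tanh(\tfrac x2)=\partial_x\bigl(\tfrac12\operatorname{sech}^2(\tfrac x2)\bigr)$ and $\operatorname{sech}(\tfrac x2)\bigl(5\tanh^2(\tfrac x2)-1\bigr)$, and its essential spectrum is $[1,\infty)$; thus it has exactly one negative eigenvalue, and its kernel is the translation direction.

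For $n\ge1$, since $-\partial_x^{-2}\ge0$ we have $\mathcal{L}_{\omega_n}\ge\mathcal{L}_{\omega_1}$, so it is enough to prove $\mathcal{L}_{\omega_1}\ge0$; as $\omega_1^2=\tfrac3{16}$ this is the inequality $\langle\mathcal{L}_0 Z,Z\rangle+\tfrac3{16}\|\partial_x^{-1}Z\|^2\ge0$ for all $Z$ with $\partial_x^{-1}Z\in L^2$, equivalently, writing $Z=\partial_x w$ with $w=\partial_x^{-1}Z\in L^2$, the operator bound $\partial_x\mathcal{L}_0\partial_x\le\tfrac3{16}$ on $L^2(\mathbb{R})$. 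I would split by parity. If $w$ is even then $w'$ is odd, hence orthogonal to the even ground state $\operatorname{sech}^3(\tfrac x2)$ of $\mathcal{L}_0$, so $\langle\partial_x\mathcal{L}_0\partial_x w,w\rangle=-\langle\mathcal{L}_0 w',w'\rangle\le0$ because $\mathcal{L}_0\ge0$ off its ground state. If $w$ is odd, the constant-coefficient part of $\partial_x\mathcal{L}_0\partial_x$ is $-\partial_x^4+\partial_x^2$, whose symbol is nonpositive, so the essential spectrum lies in $(-\infty,0]$ and only finitely many positive eigenvalues occur; the substitution $z=\tanh(\tfrac x2)$ converts the eigenvalue equation $\partial_x^2\mathcal{L}_0 Z=\Lambda Z$ into a hypergeometric ODE (just as near $\left(\ref{equag}\right)$), its $L^2$ solutions can be enumerated, and the largest eigenvalue in the odd sector is $\tfrac3{16}$, realised at $w=\operatorname{sech}(\tfrac x2)\tanh(\tfrac x2)$ --- that is, at $Z=w'=\operatorname{sech}(\tfrac x2)-2\operatorname{sech}^3(\tfrac x2)$, which indeed satisfies $\partial_x^2\mathcal{L}_0 Z=\tfrac3{16}Z$ and has a single sign change, hence is the ground state of that sector. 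The bound is sharp: for this $Z$ one computes $\mathcal{L}_0 Z=\tfrac34\operatorname{sech}(\tfrac x2)$ and $\partial_x^{-1}Z=-2\operatorname{sech}(\tfrac x2)\tanh(\tfrac x2)$, whence $\langle\mathcal{L}_0 Z,Z\rangle=-1=-\tfrac3{16}\|\partial_x^{-1}Z\|^2$ --- which is precisely why the critical period $\tfrac{8\sqrt3\pi}{3}$ appears. Consequently $\mathcal{L}_{\omega_1}\ge0$ with one-dimensional kernel $\operatorname{span}\{Z\}$, and $\mathcal{L}_{\omega_n}>0$ for $|n|\ge2$, so no transverse mode contributes a negative eigenvalue and $N_-(\mathbb L)=1$.

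The main obstacle is the odd-sector computation in the last step, i.e.\ showing that $\tfrac3{16}$ is the \emph{top} of the spectrum of $\partial_x\mathcal{L}_0\partial_x$ restricted to odd functions --- this is the sharp transverse-stability threshold of the KP-I line soliton, and since the inequality is attained one cannot afford to lose any constant (a crude estimate using $\operatorname{sech}^2\le1$ fails). A secondary subtlety is the functional-analytic set-up: for $n\neq0$ the form domain must be $\{Z:\partial_x^{-1}Z\in L^2\}$ rather than $L^2$, and it is essential that $\operatorname{sech}^3(\tfrac x2)$, the negative direction of $\mathcal{L}_0$, is not mean-zero and is therefore excluded, since $\mathcal{L}_0$ itself is not nonnegative on mean-zero functions and the term $\tfrac3{16}\|\partial_x^{-1}\cdot\|^2$ is genuinely doing work. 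Combining the three parts yields that $\mathbb{L}$ has exactly one negative eigenvalue.
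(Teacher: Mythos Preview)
Your strategy coincides with the paper's: Fourier-decompose in $y$, pick up one negative eigenvalue from the $n=0$ mode via the classical P\"oschl--Teller spectrum, and reduce $n\ge1$ by monotonicity to the single inequality $\mathcal{L}_{\omega_1}\ge0$, equivalently $\partial_x\mathcal{L}_0\partial_x\le\tfrac{3}{16}$. The paper rewrites this as the statement that
\[
\mathbb{T}a:=-a^{(4)}+\bigl((1-3\operatorname{sech}^2(\tfrac{x}{2}))a'\bigr)'-\tfrac{3}{16}a=\partial_x B_1\partial_x\,a
\]
has no positive eigenvalue, and does \emph{not} prove this directly: it invokes Lemma~2.3 of Rousset--Tzvetkov \cite{F} (see also Alexander--Pego--Sachs \cite{A}), where precisely this spectral bound for the KP-I line-soliton operator is established.

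Your parity split is a legitimate attempt at a self-contained alternative, and the even-$w$ half is correct. The odd-$w$ half, however, has the gap you yourself flag as the ``main obstacle'': the appeal to ``single sign change, hence ground state of that sector'' is Sturm oscillation theory, valid for second-order problems but not for the fourth-order operator $\partial_x\mathcal{L}_0\partial_x$ (equivalently, for the non-self-adjoint eigenproblem $\partial_x^{2}\mathcal{L}_0 Z=\Lambda Z$); and the hypergeometric enumeration you propose is asserted rather than carried out. You have correctly identified the extremizer --- which the paper obtains, incidentally, by differentiating the family $Q_k$ at the endpoint --- and verified that $\tfrac{3}{16}$ is \emph{an} eigenvalue, but not that it is the \emph{top} one. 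To close the argument, either cite \cite{F,A} as the paper does, or actually complete the $z=\tanh(\tfrac{x}{2})$ reduction and list the $L^2$ eigenpairs of $\mathbb{T}$ explicitly.
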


\begin{proof}
This essentially follows from Lemma 2.3 of \cite{F}.

Let $\lambda$ be a negative eigenvalue of , with $\eta$ being an
eigenfunction:%
\[
-\partial_{x}^{2}\eta+\eta-3\cosh^{-2}\left(  \frac{x}{2}\right)
\eta+\partial_{x}^{-2}\partial_{y}^{2}\eta=\lambda\eta.
\]
Since $\eta$ is $\frac{8\sqrt{3}\pi}{3}$-period in $y,$ we can write $\eta$ as
Fourier series:
\[
\eta\left(  x,y\right)  =\sum\limits_{n}\left(  a_{n}\left(  x\right)
\cos\left(  \frac{\sqrt{3}}{4}ny\right)  +b_{n}\left(  x\right)  \sin\left(
\frac{\sqrt{3}}{4}ny\right)  \right)  .
\]
For the $n=0$ component, the operator reduces to
\[
-\partial_{x}^{2}\eta+\eta-3\cosh^{-2}\left(  \frac{x}{2}\right)  \eta.
\]
$\allowbreak$ It is well known that this operator has a unique negative eigenvalue.

For general $n\in\mathbb{N}$, we get an equation of the form
\[
B_{n}a:=-a^{\prime\prime}+a-3\cosh^{-2}\left(  \frac{x}{2}\right)  a-\frac
{3}{16}n^{2}\partial_{x}^{-2}a=\lambda a.
\]
Note that for $n=1,$ differentiating the family of periodic solution at $k=0$
yields an eigenfunction for the eigenvalue $\lambda=0.$ Consider the operator
\[
\mathbb{T}a:=-a^{\left(  4\right)  }+\left(  \left(  1-3\cosh^{-2}\left(
\frac{x}{2}\right)  \right)  a^{\prime}\right)  ^{\prime}-\frac{3}{16}a.
\]
Observe that $\mathbb{T}a=\partial_{x}B\partial_{x}a.$ From Lemma 2.3 of
\cite{F}(see equation 2.5 there, see also \cite{A}), we know that the operator
$\mathbb{T}$ has no positive eigenvalue. Hence $B_{1}$ has no negative
eigenvalue. Consequently, for any $n>1,$ the quadratic form associated to
$B_{n}$ is positive definite. This finishes the proof.
\end{proof}

Let $E$ be the natural energy space associated to the KP-I, with the norm
$\left\Vert \cdot\right\Vert .$%
\[
E:=\left\{  f:\left\Vert f\right\Vert ^{2}:=\int_{\mathbb{R}^{2}}\left(
\left\vert \partial_{x}f\right\vert ^{2}+f^{2}+\left(  \partial_{x}%
^{-1}\partial_{y}f\right)  ^{2}\right)  <+\infty\right\}  .
\]

\begin{theorem}
\label{stability}The lump is orbitally stable in the following sense: For any
$\varepsilon>0,$ there exists $\delta>0,$ such that, if $u\left(
t;x,y\right)  $ is solution of KP-I with $\left\Vert u\left(  0\right)
-Q\right\Vert <\delta,u\left(  0\right)  \in E,$ then for all $t\in\left(
0,+\infty\right)  ,$
\[
\inf_{\gamma_{1},\gamma_{2}\in\mathbb{R}}\left\Vert u\left(  t\right)
-Q\left(  \cdot+\gamma_{1},\cdot+\gamma_{2}\right)  \right\Vert <\varepsilon.
\]

\end{theorem}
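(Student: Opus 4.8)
The plan is to place the problem in the Hamiltonian framework of Grillakis--Shatah--Strauss \cite{GSS}, in the form already developed for generalized KP-I lumps in \cite{Saut,Saut2}, and to supply its two decisive ingredients: the spectral properties of the linearized operator, which are exactly Theorems \ref{thm2} and \ref{main}, and the strict convexity of the function $d$.

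First I record the variational structure. Set $\mathcal{E}(u)=\int_{\mathbb{R}^{2}}(\tfrac12(\partial_{x}u)^{2}-u^{3}+\tfrac12(\partial_{x}^{-1}\partial_{y}u)^{2})$, $\mathcal{Q}(u)=\tfrac12\int_{\mathbb{R}^{2}}u^{2}$, and $\mathcal{R}(u)=\tfrac12\int_{\mathbb{R}^{2}}u\,\partial_{x}^{-1}\partial_{y}u$; all three are conserved along the KP-I flow, which is Hamiltonian in the form $\partial_{t}u=\partial_{x}\,\mathcal{E}'(u)$ on the energy space $E$, with $\mathcal{Q}$ and $\mathcal{R}$ generating translations in $x$ and $y$ respectively. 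Applying $\partial_{x}^{2}$ to $\mathcal{E}'(u)+c\,\mathcal{Q}'(u)=-\partial_{x}^{2}u-3u^{2}+\partial_{x}^{-2}\partial_{y}^{2}u+cu$ reproduces the travelling-wave equation $\partial_{x}^{2}(\partial_{x}^{2}u_{c}-cu_{c}+3u_{c}^{2})-\partial_{y}^{2}u_{c}=0$; since $u_{c}=2\partial_{x}^{2}\ln\tau$ with $\tau$ a polynomial, the quantity $\mathcal{E}'(u_{c})+c\,\mathcal{Q}'(u_{c})$ decays in $x$ and is annihilated by $\partial_{x}^{2}$, hence vanishes, so $u_{c}$ is a constrained critical point with multipliers $(c,0)$. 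Its Hessian is $\mathcal{L}_{c}:=\mathcal{E}''(u_{c})+c\,\mathcal{Q}''(u_{c})=-\partial_{x}^{2}+c-6u_{c}+\partial_{x}^{-2}\partial_{y}^{2}$, which for $c=1$ is the operator $\mathcal{L}$ of Theorem \ref{thm2}. By Weyl's theorem the essential spectrum of $\mathcal{L}_{c}$ is $[c,+\infty)$, so below $c$ its spectrum is discrete; Theorem \ref{thm2} says there is exactly one negative eigenvalue, and Theorem \ref{main} identifies $\ker\mathcal{L}_{c}=\operatorname{span}\{\partial_{x}u_{c},\partial_{y}u_{c}\}$, the tangent space to the $\mathbb{R}^{2}$-orbit. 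These are precisely the spectral hypotheses required (for the translation group $\mathbb{R}^{2}$).

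For the convexity, I use the scaling $u_{c}(x,y)=c\,Q(\sqrt{c}\,x,\,c\,y)$, which is the travelling profile of speed $c$ (check it from the explicit formula, or by scaling the profile equation). A change of variables gives $\int(\partial_{x}u_{c})^{2}=c^{3/2}\int Q_{x}^{2}$, $\int u_{c}^{3}=c^{3/2}\int Q^{3}$, $\int(\partial_{x}^{-1}\partial_{y}u_{c})^{2}=c^{3/2}\int(\partial_{x}^{-1}\partial_{y}Q)^{2}$ and $\int u_{c}^{2}=c^{1/2}\int Q^{2}$, hence $d(c)=c^{3/2}d(1)$ where $d(1)=\int_{\mathbb{R}^{2}}(\tfrac12 Q_{x}^{2}-Q^{3}+\tfrac12(\partial_{x}^{-1}\partial_{y}Q)^{2}+\tfrac12 Q^{2})$. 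Differentiating $d(c)=\mathcal{E}(u_{c})+c\,\mathcal{Q}(u_{c})$ and using $\mathcal{E}'(u_{c})+c\,\mathcal{Q}'(u_{c})=0$ yields the envelope identity $d'(c)=\mathcal{Q}(u_{c})=\tfrac12\|u_{c}\|_{L^{2}}^{2}>0$; comparing with $d'(c)=\tfrac32 c^{1/2}d(1)$ forces $d(1)=\tfrac13\int_{\mathbb{R}^{2}}Q^{2}>0$ (equivalently, testing the profile equation against $Q$ gives $\int Q_{x}^{2}+\int Q^{2}+\int(\partial_{x}^{-1}\partial_{y}Q)^{2}=3\int Q^{3}$, so $d(1)=\tfrac12\int Q^{3}$). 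Therefore $d''(c)=\tfrac34 c^{-1/2}d(1)>0$ for every $c>0$, i.e. $d$ is strictly convex.

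Finally, global well-posedness of KP-I in the energy space is the theorem of Ionescu--Kenig--Tataru \cite{Kenig2}, so the flow is defined and continuous on $E$. Feeding into the Grillakis--Shatah--Strauss criterion for the $\mathbb{R}^{2}$ translation group the facts $n(\mathcal{L}_{c})=1$, $\ker\mathcal{L}_{c}=\operatorname{span}\{\partial_{x}u_{c},\partial_{y}u_{c}\}$, the spectral gap, and $d''(c)>0$ (the $2\times2$ momentum Hessian is diagonal by the $y\mapsto-y$ symmetry of $u_{c}$, so the slope condition reduces to $d''(c)>0$), one obtains coercivity of $\mathcal{E}+c\,\mathcal{Q}$ on the codimension-two subspace cut out by conservation of $\mathcal{Q},\mathcal{R}$ and transversal to the orbit, and a standard modulation/contradiction argument then gives Theorem \ref{stability}. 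The main obstacle is not the convexity computation, which is routine, but the functional-analytic bookkeeping behind this last step: the symplectic operator $J=\partial_{x}$ is not invertible, the momenta and $\mathcal{L}_{c}$ involve the nonlocal operator $\partial_{x}^{-1}\partial_{y}$, and $\ker\mathcal{L}_{c}$ is two-dimensional, so one must verify the abstract hypotheses of \cite{GSS} in the space $E$, handle both translation directions simultaneously (leaning on the framework of \cite{Saut,Saut2}), and check that the eigenvalue/kernel data of Theorems \ref{thm2}--\ref{main}, stated for $L^{2}$ eigenfunctions and decaying solutions, transfer to the form-domain setting used there.
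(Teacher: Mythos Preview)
Your proposal is correct and follows essentially the same line as the paper: set up the Grillakis--Shatah--Strauss framework, feed in the spectral information (Morse index one from Theorem~\ref{thm2}, kernel $=\operatorname{span}\{\partial_xQ,\partial_yQ\}$ from Theorem~\ref{main}), verify $d''(c)>0$, and invoke global well-posedness in $E$ from \cite{Kenig2}.

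A few minor organizational differences are worth noting. The paper carries out the Morse index argument (continuation along the periodic family $T_2$, using Theorem~\ref{nonde} and Lemma~\ref{one-dimensional}) inside the proof of Theorem~\ref{stability} itself, whereas you simply cite Theorem~\ref{thm2}; either is fine. For the convexity, the paper writes only $d'(c)=\tfrac12\sqrt{c}\int u_1^2$ and concludes $d''>0$ in one line, while your scaling computation $d(c)=c^{3/2}d(1)$ makes the same point more explicitly. Finally, you bring in the transverse momentum $\mathcal{R}$ and work with the full $\mathbb{R}^2$-GSS machinery; the paper instead runs the Bona--Souganidis--Strauss style argument of \cite{Strauss} using only the $L^2$ conservation and the Hamiltonian, imposing the two orthogonality conditions $\int w\,\partial_xQ=\int w\,\partial_yQ=0$ directly via modulation and controlling the component along $Q$ by \eqref{alpha}. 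Your caveat about the functional-analytic bookkeeping (noninvertible $J=\partial_x$, nonlocal $\partial_x^{-1}\partial_y$) is well placed; the paper handles this by the concrete coercivity estimate $\int\rho(\mathcal{L}\rho)\geq c\|\rho\|^2$ on the triple orthogonal complement, citing Lemma~5.1 of \cite{Strauss}.
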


\begin{proof}
We first claim that the Morse index of the lump solution is equal to one. Here
we define its Morse index to be the number of negative eigenvalues(counted
with multiplicity) of the operator $\mathcal{L}:$
\[
\eta\rightarrow-\partial_{x}^{2}\eta+\eta-6Q\eta+\partial_{x}^{-2}\partial
_{y}^{2}\eta.
\]
We also define the Morse index of the periodic solution $T_{2}$(note that
these solutions depending on $k$) to be the number of negative eigenvalues of
the operator $\mathcal{L}_{k}$
\[
\eta\rightarrow-\partial_{x}^{2}\eta+\eta-6T_{2}\eta+\partial_{x}^{-2}%
\partial_{y}^{2}\eta
\]
in the space of functions which are $t_{k}$-periodic in $y.$ We know that
these negative eigenvalues have minimax characterization, hence depend
continuously on the potential $T_{2}.$ Observe that $T_{2}$ depends
continuously $k$ and as $k\rightarrow0,$ it converges to the lump on any
compact set. Now Theorem \ref{nonde} tells us that $T_{2}$ is nondegenerate,
in the sense that the kernels of $\mathcal{L}_{k}$ are generated by space
translation in the $x$ and $y$ directions. This implies that the Morse index
of $T_{2}$ is invariant along this family of periodic solutions $T_{2}$. By
Lemma \ref{one-dimensional}, the Morse index of the 1D solution is one.
Therefore the Morse index of $T_{2}$ is equal to one. It then follows that the
Morse index of the lump solution is also equal to one.

With the spectral property of the lump solution understood, its orbital
stability essentially follows from a result of
Grillakis-Shatah-Strauss \cite{GSS}. (See section 5 of \cite{Strauss}, also see
\cite{F} for the case of line solitons.) We briefly sketch the proof below.

Let $u_{c}$ be the family of lumps with speed $c.$ Define the function
\[
d\left(  c\right)  :=\int_{\mathbb{R}^{2}}\left(  \frac{1}{2}\left(
\partial_{x}u_{c}\right)  ^{2}-u_{c}^{3}+\frac{1}{2}\left(  \partial
_{y}\partial_{x}^{-1}u_{c}\right)  ^{2}+\frac{1}{2}cu_{c}^{2}\right)  .
\]
Then $d^{\prime}\left(  c\right)  =\frac{1}{2}\sqrt{c}\int_{\mathbb{R}^{2}%
}u_{1}^{2}.$ Hence
\begin{equation}
d^{\prime\prime}\left(  c\right)  >0. \label{d}%
\end{equation}
Now using $\left(  \ref{d}\right)  $ and arguing similarly as Lemma 5.1 of
\cite{Strauss}, we can prove that if $\rho$ is a function orthogonal in
$L^{2}$ to $Q$ and $\partial_{x}Q,\partial_{y}Q,$ then,
\[
\int_{\mathbb{R}^{2}}\rho\left(  \mathcal{L}\rho\right)  \geq c\left\Vert
\rho\right\Vert ^{2},
\]
for some constant $c>0.$ We define
\[
Q_{a,b}:=Q\left(  x-a,y-b\right)  .
\]
Let $u\left(  t;x,y\right)  $ be a solution of the KP-I flow, which is
initially close to the lump:
\[
\left\Vert u\left(  0\right)  -Q_{0,0}\right\Vert <\delta.
\]
Without loss of generality, we assume $\left\Vert u\left(  0\right)
\right\Vert _{L^{2}}=\left\Vert Q_{0,0}\right\Vert _{L^{2}}.($The general case
follows from a scaling argument for $Q$ in the travelling speed). The
existence of this solution is guaranteed by the result of \cite{Kenig2}. For
$t$ sufficiently small, we can find $\gamma_{1},\gamma_{2},$ depending on $t,$
such that the function
\[
w\left(  t\right)  :=u\left(  t\right)  -Q_{\gamma_{1},\gamma_{2}}%
\]
satisfies the orthogonality condition
\begin{equation}
\int_{\mathbb{R}^{2}}\left(  w\partial_{x}Q_{\gamma_{1},\gamma_{2}}\right)
=\int_{\mathbb{R}^{2}}\left(  w\partial_{y}Q_{\gamma_{1},\gamma_{2}}\right)
=0. \label{or}%
\end{equation}
On the other hand, $w$ can be written as
\[
w=\alpha Q_{\gamma_{1},\gamma_{2}}+w_{1},
\]
for some small constant $\alpha\left(  t\right)  $ and a function $w_{1}$
satisfying $\int_{\mathbb{R}^{2}}\left(  w_{1}Q_{\gamma_{1},\gamma_{2}%
}\right)  =0.$ Note that by $\left(  \ref{or}\right)  ,$%
\[
\int_{\mathbb{R}^{2}}\left(  w_{1}\partial_{x}Q_{\gamma_{1},\gamma_{2}%
}\right)  =\int_{\mathbb{R}^{2}}\left(  w_{1}\partial_{y}Q_{\gamma_{1}%
,\gamma_{2}}\right)  =0.
\]
Moreover, by the conservation of $L^{2}$ norm, we have%
\[
\int_{\mathbb{R}^{2}}\left(  \left(  1+\alpha\right)  Q_{\gamma_{1},\gamma
_{2}}+w_{1}\right)  ^{2}=\int_{\mathbb{R}^{2}}Q_{0,0}^{2}.
\]
It follows that
\begin{equation}
\left(  2\alpha+\alpha^{2}\right)  \int_{\mathbb{R}^{2}}Q_{0,0}^{2}%
=-\int_{\mathbb{R}^{2}}w_{1}^{2}. \label{alpha}%
\end{equation}
On the other hand, the Hamiltonian energy
\[
H\left(  u\right)  =\int_{\mathbb{R}^{2}}\left(  \frac{1}{2}\left(
\partial_{x}u\right)  ^{2}-u^{3}+\frac{1}{2}\left(  \partial_{y}\partial
_{x}^{-1}u\right)  ^{2}+\frac{1}{2}u^{2}\right)
\]
is conserved by this flow. Hence for any $t,$ $H\left(  u\right)  =H\left(
u\left(  0\right)  \right)  .$ Note that
\[
H(u)=H\left(  Q_{\gamma_{1},\gamma_{2}}\right)  +\int_{\mathbb{R}^{2}}w\left(
\mathcal{L}w\right)  -\int_{\mathbb{R}^{2}}w^{3}.
\]
Anisotropic Sobolev inequality(see for instance equation (2.9) in \cite{F})
tells us that
\[
\int_{\mathbb{R}^{2}}w^{3}\leq C\left\Vert w\right\Vert ^{3}.
\]
Applying the spectral property of the $\mathcal{L}$ and the estimate $\left(
\ref{alpha}\right)  $ of $\alpha,$ we deduce
\[
H\left(  u\right)  \geq H(Q_{\gamma_{1},\gamma_{2}})+C\left\Vert w\right\Vert
^{2}.
\]
In view of the conservation of Hamiltonian $H$ along the KP-I flow, we get
\[
\left\Vert w\right\Vert ^{2}\leq C\left(  H\left(  u\left(  0\right)  \right)
-H\left(  Q_{0,0}\right)  \right)  .
\]
This completes the proof.
\end{proof}

\begin{remark}
Now with the spectral property of the periodic solutions $T_{2}$ understood,
it is also possible to get their orbitally stability in the class of
$\frac{2\pi}{kb}$-periodic solutions(The case of line solitons is discussed in
\cite{F,Yamakazi}). We will not address this issue here.
\end{remark}

\end{document}